\newtheorem{theorem}{Theorem}
\newtheorem{proposition}[theorem]{Proposition}
\newtheorem{definition}[theorem]{Definition}
\newtheorem{corollary}[theorem]{Corollary}
\newtheorem{lemma}[theorem]{Lemma}
\newtheorem{remark}{Remark}
\def\qed{\hbox{${\vcenter{\vbox{		 
   \hrule height 0.4pt\hbox{\vrule width 0.5pt height 6pt
   \kern5pt\vrule width 0.5pt}\hrule height 0.4pt}}}$}}
\def\cB{\mathcal B}
\def\cD{\mathcal D}
\def\cF{\mathcal F}
\def\cH{\mathcal H}
\def\cM{\mathcal M}
\def\cR{\mathcal R}
\def\bE{\mathbb E}
\def\bP{\mathbb P}
\def\eps{\varepsilon}
\newcommand{\ste}[1]{\textcolor{black}{#1}}
\begin{document}

\title{Threshold dynamics of SAIRS epidemic model with Semi-Markov switching}
\author{Stefania Ottaviano}
\address{Stefania Ottaviano \newline University of Trento, Dept.~of Civil, Environmental and Mechanical Engineering, Via Mesiano 77, Trento, 38123, Italy}

\email{stefania.ottaviano@unitn.it}

\keywords{
 Susceptible--Asymptomatic infected--symptomatic Infected--Recovered--Susceptible, Vaccination, Semi-Markov switching, Stochastic Stability, Invariant probability measure} 

\maketitle

\begin{abstract}
We study the threshold dynamics of a stochastic SAIRS-type model with vaccination, where the role of asymptomatic and symptomatic infectious individuals is explicitly considered in the epidemic dynamics. In the model, the values of the disease transmission rate may switch between different levels under the effect of a semi-Markov process. We provide sufficient conditions ensuring the almost surely epidemic extinction and persistence in time mean. In the case of disease persistence, we investigate the omega-limit set of the system and give sufficient conditions for the existence and uniqueness of an invariant probability measure.
\end{abstract}

\section{Introduction}
Starting  with  the  research  of  Kermack  and  McKendrick \cite{kermack}, in  the last century a huge amount of mathematical epidemic models have been formulated, analysed and applied to a variety  of infectious  diseases, specially during the recent Covid-19 pandemic.

Once an infectious disease developed, the main goal is containing its spread. Several control strategies may be applied, such as detection and isolation of infectious individuals, lockdowns or vaccination. However, the detection of infectious individuals is far from being easy since they may not show symptoms. The presence of asymptomatic cases allows a wide circulation of a virus in the population, since they often remain unidentified and presumably have more contacts that symptomatic cases.  
The contribution of the so called ``silent spreaders'' to the infection transmission dynamics are relevant for various communicable diseases, such as Covid-19, influenza, cholera and shigella \cite{kemper1978effects,stilianakis1998emergence,nelson2009cholera,robinson2013model,ansumali2020modelling, peirlinck2020visualizing,ottaviano2021global};
hence, asymptomatic cases should be considered in such mathematical epidemic models.

The containment of the disease with multiple lockdowns or isolation processes affects the transmission of the disease through the population. 
Moreover, in
real biological systems, some parameters of the model are usually influenced by random switching of the
external environment regime. For example, disease transmission rate in some epidemic model is influenced
by random meteorological factors linked to the survival of many bacteria and viruses \cite{serra2013european,small1986relationship}. 
Thus, the transmission rate as the ability of an infectious individual to transmit infection and also as expression of the contact rate between individuals can be subject to random fluctuations.
Hence, the choice of fixed deterministic parameters in models is unlikely to be realistic. 
In epidemiology, many authors have considered random switching systems (also called hybrid systems), whose distinctive feature is
the coexistence of continuous dynamics and discrete events (random jumps at points in time).
In particular, many works consider regime switching of external environments following a homogeneous continuous-time Markov chain \cite{gray2012sis,greenhalgh2016modelling,li2017threshold,han2013stochastic,wang2019dynamical, ottaviano2020stochastic}. The Markov property facilitates the mathematical analysis, although it can be a limitation as
the sojourn time in each environment is exponentially distributed, which yields constant transition
rates between different regimes. However, in reality, the transition rates are usually time-varying, hence, in each environmental state the conditional holding time distribution can be not exponential. For example, as shown in \cite{serra2013european,small1986relationship} and reported in \cite{li2019threshold}, the dry spell (consisting of consecutive
days with daily rain amount below some given threshold) length distribution is better modeled by Pearson
type III distribution, gamma distribution or Weibull distribution. \\
In this work, in order to include random influences on transmission parameters and overcome the drawback of the Markov setting, we use a semi-Markov process for describing environmental random changes. Semi-Markov switching systems are an emerging topic from both theoretical and practical viewpoints, able of capturing inherent uncertainty and randomness in the environment in many applied fields, ranging
from epidemiology to DNA analysis, financial engineering, and wireless communications \cite{zong2021advances}. Compared to the most common Markov switching systems, they
better characterize a broader range of phenomena but brings more difficulties to their stability
analysis and control. Recently, a semi-Markov switching model has been used to analyze the coexistence and competitiveness of species in ecosystems \cite{li2021coexistence}.
In epidemiology, to the best of our knowledge, there
are only very few semi-Markov switching models \cite{li2019threshold,zhao2020threshold,cao2021basic} and no one of these considers the role of the asymptomatic individuals in the disease dynamics. Thus, in this paper, we want to fill this gap and improve our understanding of these types of hybrid systems.\\

Precisely, we study a SAIRS-type model with vaccination, where the total population $N$ is partitioned into four compartments, namely $S$, $A$, $I$, $R$, which represent the fraction of Susceptible, Asymptomatic infected, symptomatic Infected and Recovered individuals, respectively, such that $N=S+A+I+R$. The infection can be transmitted to a susceptible through a contact with either an asymptomatic infectious individual, at rate $\beta_A$, or a symptomatic individual, at rate $\beta_I$. 
Once infected, all
susceptible individuals enter an asymptomatic state, indicating a
delay between infection and symptom onset if they occur. Indeed, we include in the asymptomatic class both individuals who will never develop the symptoms and pre-symptomatic who will eventually become symptomatic. From the asymptomatic compartment, an
individual can either progress to the class of symptomatic infectious $I$, at rate $\alpha$,
or recover without ever developing symptoms, at rate $\delta_A$. An infected individuals with symptoms can recover at a rate $\delta_I$.
We assume that the recovered individuals do not obtain a long-life immunity and can return to the susceptible state  after an average time $1/\gamma$. We also assume that a proportion $\nu$ of susceptible individuals receive a dose of vaccine which grants them a temporary immunity.
We do not add a compartment for the vaccinated individuals,
not distinguishing the vaccine-induced immunity from the natural one acquired after recovery from the virus.
We consider the vital dynamics of the entire population and, for simplicity, we assume that the rate of births and deaths are the same, equal to $\mu$; we do not distinguish between natural deaths and disease related deaths \cite{ottaviano2021global}.\\
Moreover, we assume that the environmental regimes (or states) influence the infectious transmission rates $\beta_A$ and $\beta_I$, and that may switch under the action of a semi-Markov process. Accordingly, the values of $\beta_A$ and $\beta_I$ switch between different levels depending on the state in which the process is.\\
The paper is organized as follows. In Section \ref{model_des}, we provide some basic concepts of semi-Markov processes and determine the SAIRS model under study. We show the existence of a unique global positive solution, and find a positive
invariant set for the system. In Section \ref{thr_dynamics}, we investigate the threshold dynamics of the model. Precisely, we first consider the case in which $\beta_A(r)=\beta_I(r):=\beta(r)$, $\delta_A(r)=\delta_I(r):=\delta(r)$ and find the basic reproduction number $\cR_0$ for our stochastic epidemic model driven by the semi-Markov process. We show that $\cR_0$ is a threshold value, meaning that its position with respect to one determines the almost surely disease extinction or the persistence in time mean.
Then, we investigate the case $\beta_A(r)\neq \beta_I(r)$ or $\delta_A(r) \neq \delta_I(r)$. First, we find two different sufficient conditions for the almost surely extinction, that are interchangeable, meaning that it is sufficient that one of the two are verified to ensure the extinction. After, we find a sufficient condition for the almost surely persistence in time mean of the system. Thus, we have two not adjacent regions depending on the model parameters, one where the system goes to extinction almost surely, and the other where it is persistent.
In Section \ref{sec:limit_set}, as well as in Section \ref{sec:ipm}, for simplicity, we restrict the analysis to the case of a semi-Markov process with two states. Under the disease persistence condition, we investigate the omega-limit set of the system.
The introduction of the backward recurrence time process, that keeps track of the time elapsed since the latest switch, allows the considered stochastic system to be a piecewise deterministic Markov process \cite{li2019threshold}. Thus, in Section \ref{sec:ipm}, we prove the existence of a unique invariant probability measure by utilizing an exclusion principle in \cite{{stettner1986existence}} and the property of positive Harris recurrence. \\
Finally, in Section \ref{num_exp}, we validate our analytical results via numerical simulations and show the relevant role of the mean sojourn time in each environmental regime in the extinction or persistence of the disease.

\section{Model description and basic concepts}\label{model_des}

\subsection{The Semi-Markov process.}

Let $\{r(t), t \ge 0\}$ be a semi-Markov process
taking values in the state space $\cM=\{1, \ldots, M\}$, whose elements denote the states of external environments influencing the transmission rates value of the model. Let
 $$0=\tau_0 < \tau_1 < \ldots <\tau_n < \ldots$$
be the jump times, and
$$\sigma_1=\tau_1 -\tau_0, \; \sigma_2 = \tau_2-\tau_1, \; \ldots, \; \sigma_n=\tau_n -\tau_{n-1}, \ldots$$
be the time intervals between two consecutive jumps.
Let $(p_{i,j})_{m \times m}$ denote the transition probability matrix
and $F_i(t)$, $t \in [0,\infty)$, $i=1, \ldots, M$, the conditional holding time distribution of the semi-Markov process, then
$$\bP(r(\tau_{n+1})=j, \sigma_{n+1} \leq t| r(\tau_n)=i)=p_{i,j}F_i(t),$$
and the embedded chain $\{ X_n := r(\tau_n), n=0,1, \ldots\}$ of $\{r(t), t \ge 0\}$ is Markov with one-step transition probability $(p_{i,j})_{m \times m}$. Moreover, let $f_i(t)$ represents the density function of the conditional holding time distribution $F_i(t)$, then for any $t \in [0, \infty)$ and $i,j \in \cM$, we define
\begin{align*}
q_{i,j}(t) &:= p_{i,j} \frac{f_i(t)}{1-F_i(t)} \ge 0 \qquad \forall i \neq j, \\
q_{i,i}(t) &:= -\sum_{j \in \cM, j \neq i} q_{i,j}(t) \qquad \forall i \in \cM.
\end{align*}

We give the following same assumptions
 as in \cite{li2019threshold}, which will be valid throughout the paper.\\
 
Assumptions \textbf{(H1)}:

\begin{itemize}
\item[(i)] The transition matrix $(p_{i,j})_{m \times m}$ is irreducible with $p_{i,i}=0$, $i \in \cM$;
\item[(ii)] For each $i \in \cM$, $F_i(\cdot)$ has a continuous and bounded density $f_i(\cdot)$, and $f_i(t)>0$ for all $t \in (0, \infty)$;
\item[(iii)] For each $i \in \cM$, there exists a constant $\eps_i > 0$ such that
$$\frac{f_i(t)}{1-F_i(t)} \ge \eps_i$$
for all $t \in [0,\infty)$. 
\end{itemize}

In \cite{li2019threshold}, the authors provide a list of some probability distributions satisfying the assumption \textbf{(H1)}, and show that the constraint conditions of \textbf{(H1)} are very weak. Specifically, they provide the phase-type distribution (PH-distribution) of a nonnegative random variable, and prove that this PH-distribution, or at most an approximation of it, satisfies the conditions in \textbf{(H1)}. Thus, they conclude that essentially the conditional holding time distribution of the semi-Markov process can be any distribution on $[0,\infty)$.\\

\begin{remark}
 Let us note that in the case of exponential (memoryless) sojourn time distribution, the semi-Markov process $\{r(t), t \ge 0\}$ degenerates into a continuous time Markov chain. That is, if $F_i(t)=1-e^{-q_i t}$ for some $q_i >0$, $i \in \cM$, then
 $$\frac{f_i(t)}{1-F_i(t)} \ge q_i$$
 for all $t \in [0, \infty)$, from which
 \begin{equation*}
q_{i,j}:=q_{i,j}(t)= 
\begin{cases}
q_i p_{i,j} &\text{if } i \neq j\\
-q_i &\text {if } i=j
\end{cases}
\end{equation*}
where $q_{i,j}$ is the transition rates from state $i$ to state $j$, and $q_{i,j}\geq 0$ if $i \neq j$, while $-q_i=q_{i,i}= -\sum_{i \neq j} q_{i,j}$.
Thus, the matrix $Q=(q_{i, j})_{m \times m}$ generates the Markov chain $\{r(t), t \ge 0\}$, i.e.,
\begin{equation*}
\bP\{r(t+ \Delta t)= j| r(t)=i\}=\begin{cases}
q_{i,j}\Delta t + o(\Delta t), \qquad \text{if} \quad i \neq j,\\
1+ q_{i,j}\Delta t + o(\Delta t), \qquad \text{if} \quad i=j,
\end{cases}
\end{equation*}
where $\Delta t >0$ represents a small time increment.

By the assumptions \textbf{(H1)} follows that the matrix $Q$ is irreducible. Under this condition, the Markov chain has a unique stationary positive probability distribution $\pi=(\pi_1, \ldots, \pi_m)^T$ which can be determined by solving the following linear equation $\pi^T Q =0$, subject to $\sum_{r=1}^M \pi_r=1$, and $\pi_r >0$, $\forall r \in \cM$.
\end{remark}

Let us introduce the process
$$\eta(t)=t-\sup\{u <t: r(u) \neq r(t)\},$$
which represents the amount of time the process $\{ r(t), t \ge 0\}$ is at the current state after the last jump. It is also denoted as the backward recurrence time process. The pair $\{(\eta(t),r(t)), t \ge 0\}$ satisfies the Markov property \cite{limnios2001semi}, moreover it is strong Markov \cite[Chapter 6]{hou2013markov}.

\subsection{Model description.} Let us consider a SAIRS model with vaccination, as in \cite{ottaviano2021global}. 
\begin{equation}\label{sairs}
\begin{split}
     \frac{d S(t)}{dt} &= \mu  - \bigg(\beta_A A(t) + \beta_I I(t)\bigg)S(t) -(\mu + \nu) S(t) +\gamma R(t),\\ 
     \frac{d A(t)}{dt} &=  \bigg(\beta_A A(t) + \beta_I I(t)\bigg)S(t) -(\alpha + \delta_A +\mu) A(t), \\ 
     \frac{d I(t)}{dt} &= \alpha A(t) - (\delta_I + \mu)I(t), \\ 
     \frac{d R(t)}{dt} &=  \delta_A A(t) +\delta_I I(t) + \nu S(t) - (\gamma + \mu)R(t).
     \end{split}
\end{equation}

Let us now incorporate the impact of the external random environments into system \eqref{sairs}.
We assume that the external random environment is described by a semi-Markov process.
We only consider the environmental influence on the disease transmission rate
since it may be more sensitive to environmental fluctuations than other parameters of model \eqref{sairs}. 
Thus, the average value of the transmission rate may switch between different levels with the switching of the environmental regimes.
As a result, the SAIRS deterministic model \eqref{sairs} evolves in a random dynamical system with semi-Markov switching of the form
\begin{equation}\label{sairs_s}
\begin{split}
     \frac{d S(t)}{dt} &= \mu  - \bigg(\beta_A(r(t))A(t) + \beta_I(r(t))I(t)\bigg)S(t) -(\mu + \nu) S(t) +\gamma R(t),\\ 
     \frac{d A(t)}{dt} &=  \bigg(\beta_A(r(t))A(t) + \beta_I(r(t))I(t)\bigg)S(t) -(\alpha + \delta_A +\mu) A(t), \\ 
     \frac{d I(t)}{dt} &= \alpha A(t) - (\delta_I + \mu)I(t), \\ 
     \frac{d R(t)}{dt} &=  \delta_A A(t) +\delta_I I(t) + \nu S(t) - (\gamma + \mu)R(t),   
     \end{split}
\end{equation}

Let us introduce $\bar \beta:=(\beta_A,\beta_I)$. If the initial conditions of the driving process $\{(\eta(t),r(t)), t \ge 0\}$ are $\eta(0)=0$ and $r(0)=r_0$, then system \eqref{sairs_s} starts from the initial condition $(S(0), A(0), I(0), R(0))$ and follows \eqref{sairs} with $\bar \beta=\bar \beta(r_0)$ until the first jump time $\tau_1$, with conditional holding distribution $F_{r_0}(\cdot)$. Then, the environmental regime switches instantaneously from state $r_0$ to state $r_1$; thus, the process restarts from the state $r_1$ and the system evolves accordingly to \eqref{sairs} with $\bar \beta=\bar \beta(r_1)$ and distribution $F_{r_1}(\cdot)$ until the next jump time $\tau_2$. The system will evolve in the similar way as long as the semi-Markov process jumps.
This yields a continuous and piecewise
smooth trajectory in $\mathbb R^4$. Let us note that the solution process $\{x(t)=(S(t),A(t),I(t),R(t)), t \ge 0\}$
that records the position of the switching trajectory of \eqref{sairs_s} is not Markov. However, by means of additional components, $\{(x(t), \eta(t), r(t)), t \geq 0 \}$ is a homogeneous Markov process.

In this paper, unless otherwise specified, let $(\Omega,\mathcal{F}, \{\mathcal{F}_t\}_{ t\geq 0}, \bP)$ be a complete
probability space with a filtration $\{\mathcal{F}_t\}_{ t\geq 0}$ satisfying the usual
conditions (i.e. it is right continuous and $\cF_0$ contains all $\bP$-null
sets).\\

Since $S+A+I+R=1,$ system \eqref{sairs_s} is equivalent to the following three-dimensional dynamical system:
\begin{equation}\label{sairs3_s}
\begin{split}
    \frac{d S(t)}{dt} &= \mu -\bigg(\beta_A(r(t))A(t) + \beta_I(r(t))I(t)\bigg)S(t) -(\mu + \nu +\gamma) S(t) + \gamma(1-A(t)-I(t)), \\ 
      \frac{d A(t)}{dt} &=\bigg(\beta_A(r(t))A(t) + \beta_I(r(t))I(t)\bigg)S(t) -(\alpha + \delta_A +\mu) A(t),  \\
     \frac{d I(t)}{dt} &= \alpha A(t) - (\delta_I + \mu)I(t), 
\end{split}
\end{equation}
with initial condition $(S(0), A(0), I(0))$ belonging to the set 
$$\Gamma=\{ (S, A, I) \in \mathbb R_+^{3}| S+ A+ I \leq 1\},$$
where $\mathbb R_+^{3}$ is the non-negative orthant of $\mathbb R^{3}$, and initial state $r(0) \in \cM$.\\

System \eqref{sairs3_s} can be written in matrix notation as
\begin{equation}\label{dx}
    \frac{dx(t)}{dt}=g(x(t),r(t)),
\end{equation}
where $x(t) = (S(t), A(t), I(t))$ and $g(x(t)) = (g_1(x(t)), g_2(x(t)), g_3(x(t)))$ is defined according to \eqref{sairs3_s}.\\
In the following, for any initial value $x(0)$, we denote  by $x(t,\omega,x(0))=( S(t,\omega,x(0)), A(t,\omega,x(0)), I(t,\omega,x(0)))$, the solution of \eqref{dx} at time $t$ starting in $x(0)$, or by $x(t)$ if there is no ambiguity, for the sake of simplicity, and by {$x_r(t)$ the solution of the subsystem $r$}.\\


In the following theorem we ensure the no explosion of the solution in any finite time, by proving a
somehow stronger property, that is $\Gamma$ is a positive invariant domain for \eqref{sairs3_s}.

\begin{theorem}\label{invset}
  For any initial value $(x(0),\eta(0),r(0)) \in \Gamma \times \mathbb R^+ \times \cM$, and for any choice of system parameters $\beta_A(\cdot)$, $\beta_I(\cdot)$, there exists a unique solution $x(t,\omega,x(0))$ to system \eqref{sairs3_s} on $t \geq 0$. Moreover, for every $\omega \in \Omega$ the solution remains in $\Gamma$ for all $t >0$.
 \end{theorem}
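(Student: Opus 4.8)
The plan is to exploit the piecewise-deterministic structure of \eqref{sairs_s}. Fix $\omega$; the sample path $t\mapsto r(t,\omega)$ is constant on each interval $[\tau_n,\tau_{n+1})$, equal to $r_n:=r(\tau_n)$, so on that interval any solution of \eqref{sairs3_s} must solve the autonomous ODE $\dot x=g(x,r_n)$ with frozen parameters. I would therefore (i) settle local existence, uniqueness and invariance of $\Gamma$ for each frozen subsystem $\dot x=g(\cdot,r)$, $r\in\cM$; (ii) glue the deterministic pieces together across the jump times $\tau_n$; and (iii) check that the $\tau_n$ do not accumulate, so that the glued trajectory is defined on all of $[0,\infty)$.

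For step (i), for fixed $r$ the map $x\mapsto g(x,r)$ defined by \eqref{sairs3_s} is a polynomial (at most quadratic) in $(S,A,I)$, hence $C^\infty$ and in particular locally Lipschitz on $\mathbb R^3$, so Picard--Lindel\"of gives a unique maximal solution from any $x(0)\in\Gamma$. The crucial claim is that $\Gamma$ is positively invariant, which I would prove by a Nagumo-type argument, checking that on each face of $\partial\Gamma$ the vector field points into $\Gamma$ or is tangent to it: on $\{S=0\}$ one has $\dot S=\mu+\gamma(1-A-I)\ge\mu>0$; on $\{A=0\}$, $\dot A=\beta_I(r)\,IS\ge 0$; on $\{I=0\}$, $\dot I=\alpha A\ge 0$; and on $\{S+A+I=1\}$, summing the three equations of \eqref{sairs3_s} and using $S+A+I=1$ gives $\frac{d}{dt}(S+A+I)=-(\nu S+\delta_A A+\delta_I I)\le 0$. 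Equivalently, on $\Gamma$ one has the differential inequalities $\dot A\ge-(\alpha+\delta_A+\mu)A$, $\dot I\ge-(\delta_I+\mu)I$, $\dot S\ge-\big(\mu+\nu+\gamma+\beta_A(r)+\beta_I(r)\big)S$ and $\frac{d}{dt}(S+A+I)\le(\gamma+\mu)\big(1-(S+A+I)\big)$, and a comparison argument keeps the trajectory in $\Gamma$ for all positive times (this reproduces, for each frozen $r$, the invariance of $\Gamma$ established for the deterministic model in \cite{ottaviano2021global}). Since $\Gamma$ is compact and $g(\cdot,r)$ bounded on it, the maximal solution cannot blow up and is defined, and $\Gamma$-valued, on $[0,\infty)$.

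For steps (ii) and (iii), starting from $(x(0),\eta(0),r(0))=(x_0,\eta_0,r_0)\in\Gamma\times\mathbb R^+\times\cM$, run the frozen-$r_0$ solution on $[0,\tau_1)$; it stays in $\Gamma$, and since $\Gamma$ is closed and $\dot x$ is bounded the left limit $x(\tau_1^-)$ exists and lies in $\Gamma$, so we restart the frozen-$r_1$ system from $x(\tau_1):=x(\tau_1^-)$ on $[\tau_1,\tau_2)$, and so on. Induction produces a $\Gamma$-valued, continuous, piecewise-smooth solution on $[0,\tau_n)$ for every $n$, and it is the unique such solution because uniqueness holds on each deterministic piece and the pieces are determined recursively. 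Finally, by \textbf{(H1)}(ii) the densities $f_i$ are uniformly bounded over the finitely many states, so $F_i(t)\le Mt$ for some $M$, whence a standard estimate (e.g. bounding $\mathbb{E}\big[e^{-\sigma_{n+1}}\mid\mathcal F_{\tau_n}\big]$ by $M(1-e^{-1/M})<1$) gives $\mathbb{E}[e^{-\tau_n}]\to 0$ and hence $\tau_n\to\infty$ almost surely; thus the solution is defined on $[0,\infty)$. There is no serious obstacle; the only point requiring care is the boundary check in step (i) — in particular the face $\{S+A+I=1\}$ and the corners of $\Gamma$, where positivity of every rate constant and of $1-A-I$ on $\Gamma$ enters — together with the non-accumulation of jump times in step (iii), which is where \textbf{(H1)}(ii) is used; everything else is routine gluing.
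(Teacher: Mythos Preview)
Your proposal is correct and follows essentially the same piecewise-construction approach as the paper: freeze $r$ on each inter-jump interval, invoke invariance of $\Gamma$ for the deterministic subsystem, and glue across jumps. The paper simply cites \cite[Thm~1]{ottaviano2021global} for the invariance in step~(i) and does not explicitly treat non-accumulation of jump times, whereas you spell out the Nagumo boundary check and supply a correct Laplace-transform argument for $\tau_n\to\infty$ via \textbf{(H1)}(ii); these are elaborations rather than a different route.
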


 \begin{proof}
Let $0=\tau_0 < \tau_1 < \tau_2 < \ldots, < \tau_n < \ldots$ be the jump times of the semi-Markov chain $r(t)$, and
let $r(0)=r_0 \in \cM$ be the starting state. Thus, $r(t)=r_0$ on $[\tau_0,\tau_1)$. The subsystem for $t \in [\tau_0, \tau_1)$ has the following form:

\begin{equation*}
\frac{dx(t)}{dt}=g(x(t),r_0),
\end{equation*}
and, for \cite[Thm 1]{ottaviano2021global}, its solution $x(t) \in \Gamma$, for $t \in [\tau_0, \tau_1)$ and, by continuity for $t = \tau_1$, as well. Thus, $x(\tau_1) \in \Gamma$ and by considering $r(\tau_1)=r_1$, the subsystem for $t \in [\tau_1, \tau_2)$
becomes

\begin{equation*}
\frac{dx(t)}{dt}=g(x(t),r_1).
\end{equation*}
Again, $x(t) \in \Gamma$, on $t \in [\tau_1, \tau_2)$ and, by continuity for $t = \tau_2$, as well. Repeating this process continuously, we obtain the claim. 
\end{proof}

 As the switching concerns only the infection rates $\beta_A$ and $\beta_I$, all the subsystems of \eqref{sairs3_s} share the same disease-free equilibrium (DFE) 

$$x_0= \left(S_0, A_0, I_0\right)=\left(\frac{\mu+\gamma}{\mu+\nu+\gamma}, 0,0\right).$$

Now, we report results related to the stability analysis of each deterministic subsystems of \eqref{sairs3_s} corresponding to the state $r$, $r=1, \ldots, M$. The proof of the following results can be found in \cite{ottaviano2021global}, where the global stability of the deterministic model \eqref{sairs} is investigated.

\begin{lemma}\label{propR0}
The basic reproduction number $\cR_0$ of the subsystem of \eqref{sairs3_s} corresponding to the state $r$ is given by
\begin{equation}\label{R0}
\cR_0(r)  = \left ( \beta_A(r) + \dfrac{\alpha \beta_I(r)}{\delta_I + \mu}\right) \dfrac{\gamma + \mu}{(\alpha+\delta_A + \mu)(\nu + \gamma + \mu)}, \qquad r=1, \ldots, M.
\end{equation}
\end{lemma}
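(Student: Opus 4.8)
The plan is to compute $\cR_0(r)$ for the $r$-th (deterministic) subsystem of \eqref{sairs3_s} by the next-generation matrix method of van den Driessche and Watmough. Since the environmental switching affects only $\beta_A$ and $\beta_I$ and all subsystems share the same disease-free equilibrium $x_0=(S_0,0,0)$ with $S_0=\frac{\mu+\gamma}{\mu+\nu+\gamma}$, it suffices to carry this out once for a generic subsystem; the outcome is exactly the basic reproduction number obtained for the deterministic SAIRS model in \cite{ottaviano2021global}, so the proof amounts to recalling that computation.

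First I would single out $A$ and $I$ as the infected compartments and write the right-hand sides of their equations as $\mathcal F-\mathcal V$, where $\mathcal F$ collects the genuinely new infection terms and $\mathcal V$ the remaining transfers. The only new infections enter the $A$-equation through $(\beta_A(r)A+\beta_I(r)I)S$, whereas the progression flux $\alpha A$ from $A$ to $I$ belongs to $\mathcal V$. Taking Jacobians at $x_0$ yields
\begin{equation*}
F=\begin{pmatrix} \beta_A(r)\,S_0 & \beta_I(r)\,S_0 \\ 0 & 0 \end{pmatrix},\qquad
V=\begin{pmatrix} \alpha+\delta_A+\mu & 0 \\ -\alpha & \delta_I+\mu \end{pmatrix}.
\end{equation*}

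Next I would invert $V$ — it is lower triangular, hence an invertible M-matrix with nonnegative inverse — and form $FV^{-1}$. Because $F$ has a vanishing second row, $FV^{-1}$ is upper triangular, so $\cR_0(r)=\rho(FV^{-1})$ equals its $(1,1)$ entry:
\begin{equation*}
\cR_0(r)=\frac{S_0\big(\beta_A(r)(\delta_I+\mu)+\alpha\,\beta_I(r)\big)}{(\alpha+\delta_A+\mu)(\delta_I+\mu)}
=\left(\beta_A(r)+\frac{\alpha\,\beta_I(r)}{\delta_I+\mu}\right)\frac{\gamma+\mu}{(\alpha+\delta_A+\mu)(\nu+\gamma+\mu)},
\end{equation*}
which is \eqref{R0} after substituting $S_0$. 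The one delicate point — and where an error could creep in — is the $\mathcal F/\mathcal V$ bookkeeping: counting the maturation flux $\alpha A$ among the new infections would spuriously enlarge $\cR_0$, so the decomposition above should be justified from the sign structure of $V$ and the epidemiological meaning of each flux. For completeness I would also recall from \cite{ottaviano2021global} that this $\cR_0(r)$ is the threshold governing local — and in fact global — stability of $x_0$ for the $r$-th subsystem, so nothing further is needed here.
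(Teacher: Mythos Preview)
Your proposal is correct and follows exactly the approach implied by the paper: the paper does not give an independent proof but refers to \cite{ottaviano2021global}, and the matrices $F$ and $V$ you write down coincide with those displayed in \eqref{FminusV1}, confirming that the intended derivation is precisely the next-generation matrix computation you carry out.
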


Now, let us define
\begin{equation}\label{FminusV1}
 (F-V)(r) = \left( 
\begin{matrix}
\beta_A(r) S_0 -(\alpha+\delta_A+\mu) & \beta_I(r) S_0  \\ \alpha & -(\delta_I+\mu)
\end{matrix}\right), 
\end{equation}
where $F$ and $V$ are the matrices in equations $(6)$ and $(7)$ defined in \cite{ottaviano2021global}.

\begin{lemma}\label{spFV}
Let us fix $r \in \cM$. The matrix $(F-V)(r)$ related to the subsystem $r$ of \eqref{sairs3_s} has a real spectrum. Moreover, if $\rho(FV^{-1}(r)) <1$, all the eigenvalues of $(F-V)(r)$ are negative. 
\end{lemma}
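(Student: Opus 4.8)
The plan is to exploit that $(F-V)(r)$ is a $2\times 2$ matrix, so that all of its spectral information is encoded in its trace and its determinant. Denote the four entries of $(F-V)(r)$ by $a=\beta_A(r)S_0-(\alpha+\delta_A+\mu)$, $b=\beta_I(r)S_0$, $c=\alpha$ and $d=-(\delta_I+\mu)$, so that the characteristic polynomial is $\lambda^2-(a+d)\lambda+(ad-bc)$ with discriminant
\begin{equation*}
\Delta=(a+d)^2-4(ad-bc)=(a-d)^2+4bc.
\end{equation*}
Since $b=\beta_I(r)S_0\ge 0$ and $c=\alpha\ge 0$, we get $\Delta\ge (a-d)^2\ge 0$, which already shows that $(F-V)(r)$ has a real spectrum. (Equivalently, $F-V$ is a Metzler matrix, being the difference of the nonnegative matrix $F$ and the nonsingular M-matrix $V$, and a $2\times 2$ Metzler matrix has real eigenvalues.)

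For the second assertion I would use that a real $2\times 2$ matrix with real eigenvalues has both eigenvalues strictly negative if and only if its trace is negative and its determinant is positive; so it suffices to verify these two sign conditions under the hypothesis $\rho(FV^{-1}(r))<1$. First I would compute
\begin{equation*}
\det (F-V)(r) = ad-bc = (\alpha+\delta_A+\mu)(\delta_I+\mu) - \beta_A(r)S_0(\delta_I+\mu) - \alpha\beta_I(r)S_0.
\end{equation*}
Using $S_0=(\mu+\gamma)/(\mu+\nu+\gamma)$ together with the explicit expression for $\cR_0(r)$ from Lemma \ref{propR0}, one rewrites this as $\det(F-V)(r)=(\alpha+\delta_A+\mu)(\delta_I+\mu)\,(1-\cR_0(r))$. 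Since $\cR_0(r)=\rho(FV^{-1}(r))$ is, by construction, the spectral radius of the next generation matrix, the hypothesis $\rho(FV^{-1}(r))<1$ is precisely $\cR_0(r)<1$, whence $\det(F-V)(r)>0$.

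It then remains to show that the trace is negative. From the positivity of the determinant and $\alpha\beta_I(r)S_0\ge 0$ we obtain $\beta_A(r)S_0(\delta_I+\mu)<(\alpha+\delta_A+\mu)(\delta_I+\mu)$, hence $\beta_A(r)S_0<\alpha+\delta_A+\mu$. Therefore
\begin{equation*}
\mathrm{tr}\,(F-V)(r) = \beta_A(r)S_0 - (\alpha+\delta_A+\mu) - (\delta_I+\mu) < -(\delta_I+\mu) < 0.
\end{equation*}
Combining the negative trace and positive determinant with the real-spectrum property established in the first paragraph, both eigenvalues of $(F-V)(r)$ are negative, which is the claim.

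I do not expect a genuine obstacle here: the argument is elementary $2\times 2$ linear algebra, and the only slightly delicate point is the bookkeeping leading to the identity $\det(F-V)(r)=(\alpha+\delta_A+\mu)(\delta_I+\mu)(1-\cR_0(r))$, which is what turns the determinant condition into the threshold condition $\cR_0(r)<1$. An alternative, less computational route would be to invoke the classical result of van den Driessche and Watmough (as used in \cite{ottaviano2021global}) relating the sign of the spectral abscissa $s(F-V)$ to the position of $\rho(FV^{-1})$ relative to $1$ — valid because $V$ is a nonsingular M-matrix and $F\ge 0$ — and then to note that a Metzler matrix with negative spectral abscissa has all its eigenvalues in the open left half-plane, which for a $2\times 2$ matrix with real spectrum is exactly the assertion.
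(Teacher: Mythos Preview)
Your proof is correct. The paper itself does not supply a proof of this lemma but defers to \cite{ottaviano2021global}; your self-contained $2\times 2$ trace/determinant computation, together with the identity $\det(F-V)(r)=(\alpha+\delta_A+\mu)(\delta_I+\mu)(1-\cR_0(r))$, is exactly the kind of direct argument one would expect, and the alternative van den Driessche--Watmough route you sketch is the standard one in the cited reference.
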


\begin{theorem}\label{glob_attr}
Let us fix $r \in \cM$. The disease-free equilibrium $x_0$ is globally asymptotically stable for the subsystem $r$ of \eqref{sairs3_s} if $\cR_0(r)<1$.
\end{theorem}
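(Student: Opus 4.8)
The plan is to follow the standard route for global stability of a disease-free equilibrium: establish local asymptotic stability from the spectral information already available, and then prove global attractivity by a Lyapunov/comparison argument inside the invariant region $\Gamma$ of Theorem \ref{invset}. For the local part, I would invoke Lemma \ref{spFV}: since $\cR_0(r)<1$ gives $\rho(FV^{-1}(r))<1$, all eigenvalues of $(F-V)(r)$ are negative; as the Jacobian of \eqref{sairs3_s} at $x_0$ is triangular with diagonal blocks $-(\mu+\nu+\gamma)$ and $(F-V)(r)$, all its eigenvalues are negative, so $x_0$ is locally asymptotically stable. It then remains to show that $x_0$ attracts every solution starting in $\Gamma$.

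For the attractivity, I would first derive a one-sided bound on $S$: dropping the non-positive terms $-(\beta_A(r)A+\beta_I(r)I)S$ and $-\gamma(A+I)$ in the $S$-equation of \eqref{sairs3_s} yields $\dot S\le(\mu+\gamma)-(\mu+\nu+\gamma)S$, hence by comparison $\limsup_{t\to\infty}S(t)\le S_0$. I would then fix $\eps>0$ small enough that $\big(\beta_A(r)+\frac{\alpha\beta_I(r)}{\delta_I+\mu}\big)(S_0+\eps)<\alpha+\delta_A+\mu$ — admissible because at $\eps=0$ the left side equals $\cR_0(r)(\alpha+\delta_A+\mu)<\alpha+\delta_A+\mu$ — and pick $T$ with $S(t)\le S_0+\eps$ for $t\ge T$. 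On $[T,\infty)$ I would use the linear functional $L=c_1 A+c_2 I$ of the infected compartments, with positive weights $(c_1,c_2)$ chosen so that both coefficients arising in $\dot L$ are strictly negative (for instance $(c_1,c_2)$ proportional to $w^{T}V^{-1}(r)$, with $w$ a non-negative left Perron eigenvector of $V^{-1}(r)F(r)$). A direct computation from the $A$- and $I$-equations, using $S\le S_0+\eps$, should give $\dot L\le-\kappa(A+I)$ for some $\kappa>0$ (the effective coefficient of $A$ being $(\alpha+\delta_A+\mu)(\cR_0(r)-1)+O(\eps)<0$); since $L$ is comparable to $A+I$, this forces $L(t)\to0$, hence $A(t),I(t)\to0$.

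With $A,I\to0$ in hand, I would close the argument on the $S$-component: writing $u=S-S_0$, the $S$-equation becomes $\dot u=-(\mu+\nu+\gamma)u+\phi(t)$ with $\phi(t)=-(\beta_A(r)A+\beta_I(r)I)S-\gamma(A+I)\to0$, so the variation-of-constants formula (or the theory of asymptotically autonomous systems) gives $u(t)\to0$, i.e.\ $S(t)\to S_0$. Combining global attractivity with the local asymptotic stability above yields global asymptotic stability of $x_0$ in $\Gamma$. The step I expect to be the main obstacle is reconciling the Lyapunov inequality with the fact that in $\Gamma$ one only knows $S\le1$, not $S\le S_0$; this is precisely why the preliminary bound $\limsup_{t\to\infty}S(t)\le S_0$ is needed, and why the strict inequality $\cR_0(r)<1$ is essential, as it leaves room to absorb the error $S-S_0\le\eps$ on $[T,\infty)$. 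A detailed treatment, together with the global stability of the deterministic model, is given in \cite{ottaviano2021global}.
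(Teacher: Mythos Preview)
The paper does not actually prove Theorem~\ref{glob_attr}; it is imported verbatim from \cite{ottaviano2021global}, as the sentence preceding Lemma~\ref{propR0} makes explicit. Your proposal is a correct and standard route (block-triangular Jacobian for local stability, the comparison bound $\limsup_{t\to\infty}S(t)\le S_0$, a linear Lyapunov functional on the infected block exploiting the Metzler structure of $(F-V)(r)$, and an asymptotically autonomous argument for $S\to S_0$), and you yourself point to \cite{ottaviano2021global} at the end, which is exactly what the paper does. One small remark: your choice of weights $(c_1,c_2)\propto w^{\mathsf T}V^{-1}(r)$ is fine, but since you need the inequality to survive the $\eps$-perturbation of $S_0$, it is cleaner to take $(c_1,c_2)$ as a positive left eigenvector of the perturbed Metzler matrix $(F_\eps-V)(r)$ itself, whose dominant eigenvalue is still negative by continuity and Lemma~\ref{spFV}; this gives $\dot L\le\lambda L$ directly rather than going through the two separate coefficient estimates.
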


\begin{lemma}\label{ex_ee}
Let us fix $r\in \cM$. The endemic equilibrium $x^*_r = (S^*(r),A^*(r),I^*(r))$ exists and it is unique in $\mathring{\Gamma}$ for the subsystem $r$ of \eqref{sairs3_s} if $\cR_0(r)>1$. Moreover, $x^*_r$ is locally asymptotically stable in $\mathring{\Gamma}$.
\end{lemma}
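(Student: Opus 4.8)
The plan is to split the statement into three parts: (i) solve the equilibrium equations of subsystem $r$ explicitly; (ii) show that the resulting point lies in $\mathring{\Gamma}$ precisely when $\cR_0(r)>1$ and that it is the only such point; (iii) linearise at the equilibrium and apply the Routh--Hurwitz criterion.

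\emph{Step 1: explicit form of the equilibrium.} Fix $r$ and abbreviate the coefficients of subsystem $r$ as $\beta_A,\beta_I,\dots$. Setting the right-hand sides of \eqref{sairs3_s} to zero, the third equation gives $I^\ast=\tfrac{\alpha}{\delta_I+\mu}A^\ast$. Substituting into the second equation and dividing by $A^\ast$ (nonzero at an endemic equilibrium) yields
\[
S^\ast=\frac{\alpha+\delta_A+\mu}{\beta_A+\frac{\alpha\beta_I}{\delta_I+\mu}},
\]
and comparing with \eqref{R0} together with $S_0=\tfrac{\mu+\gamma}{\mu+\nu+\gamma}$ one checks the identity $S^\ast=S_0/\cR_0(r)$. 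Finally, inserting the relation $(\beta_A A^\ast+\beta_I I^\ast)S^\ast=(\alpha+\delta_A+\mu)A^\ast$ into the first equilibrium equation turns it into a \emph{linear} equation for $A^\ast$, with solution
\[
A^\ast=\frac{(\mu+\nu+\gamma)\,(S_0-S^\ast)}{\alpha+\delta_A+\mu+\gamma\bigl(1+\frac{\alpha}{\delta_I+\mu}\bigr)},\qquad I^\ast=\frac{\alpha}{\delta_I+\mu}A^\ast.
\]

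\emph{Step 2: location in $\mathring{\Gamma}$ and uniqueness.} Each of the three steps above pins down $S^\ast,A^\ast,I^\ast$ uniquely, so there is at most one endemic equilibrium. Its coordinates are strictly positive iff $S^\ast<S_0$, i.e.\ iff $\cR_0(r)>1$, which gives existence under the hypothesis. To see $x^\ast_r\in\mathring{\Gamma}$ it remains to check $S^\ast+A^\ast+I^\ast<1$: setting $R^\ast:=1-S^\ast-A^\ast-I^\ast$ and using the (suppressed) fourth equation $R^\ast=\tfrac{\delta_A A^\ast+\delta_I I^\ast+\nu S^\ast}{\gamma+\mu}>0$ forces the strict inequality.

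\emph{Step 3: local asymptotic stability.} Write $b:=\beta_A A^\ast+\beta_I I^\ast>0$, so that $bS^\ast=(\alpha+\delta_A+\mu)A^\ast$. The Jacobian of \eqref{sairs3_s} at $x^\ast_r$ is
\[
J=\begin{pmatrix}
-b-(\mu+\nu+\gamma) & -\beta_A S^\ast-\gamma & -\beta_I S^\ast-\gamma\\
b & \beta_A S^\ast-(\alpha+\delta_A+\mu) & \beta_I S^\ast\\
0 & \alpha & -(\delta_I+\mu)
\end{pmatrix}.
\]
I would expand its characteristic polynomial $\lambda^3+a_1\lambda^2+a_2\lambda+a_3$, reduce the coefficients using the equilibrium identities, and verify the Routh--Hurwitz conditions $a_1>0$, $a_3>0$, $a_1a_2>a_3$. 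The first is immediate since $a_1=-\operatorname{tr}J$ is a sum of positive terms; $a_3=-\det J>0$ should drop out of $S^\ast<S_0$ (equivalently $\cR_0(r)>1$) after simplification. The delicate point — and the step I expect to be the main obstacle — is $a_1a_2>a_3$: the entries $-\beta_A S^\ast-\gamma$ and $-\beta_I S^\ast-\gamma$ couple all three compartments through $\gamma$, so one must organise the algebra carefully, repeatedly substituting $bS^\ast=(\alpha+\delta_A+\mu)A^\ast$ and $I^\ast=\tfrac{\alpha}{\delta_I+\mu}A^\ast$, in order to display $a_1a_2-a_3$ as a manifestly positive combination of the (positive) parameters and equilibrium values. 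Should this bookkeeping prove unwieldy, one can instead invoke the global stability analysis of \cite{ottaviano2021global}, which yields local asymptotic stability a fortiori.
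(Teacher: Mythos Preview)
The paper does not supply its own proof of this lemma: immediately before the block of results containing it, the text states that ``the proof of the following results can be found in \cite{ottaviano2021global}'', and the lemma is then simply quoted. So the only ``paper proof'' is a citation, and your write-up is in fact more detailed than what the paper offers.

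Your Steps 1 and 2 are correct and complete: the equilibrium identities $I^\ast=\tfrac{\alpha}{\delta_I+\mu}A^\ast$, $S^\ast=S_0/\cR_0(r)$, and the linear equation for $A^\ast$ are right, and the argument that $R^\ast>0$ forces $S^\ast+A^\ast+I^\ast<1$ is clean. The Jacobian in Step~3 is computed correctly, and the claim $a_1>0$ does follow at once since $\beta_A S^\ast<\alpha+\delta_A+\mu$ (from the explicit form of $S^\ast$), so all three diagonal entries are negative.

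One caution on your fallback remark. You suggest that, if the Routh--Hurwitz bookkeeping for $a_1a_2>a_3$ becomes unmanageable, one can invoke the global-stability results of \cite{ottaviano2021global} to get local stability \emph{a fortiori}. But those global results (Theorems~\ref{thm:globeq} and~\ref{thm:globdiff} here) carry extra hypotheses --- either $\beta_A=\beta_I,\ \delta_A=\delta_I$, or $\beta_A<\delta_I$ --- that the present lemma does \emph{not} assume. So this shortcut would not establish local asymptotic stability in the full generality claimed; you really do need to push the Routh--Hurwitz computation through (this is what \cite{ottaviano2021global} does for the local statement), or else restrict the lemma accordingly.
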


\begin{theorem}\label{thm:globeq}
Let  us  fix $r\in \cM$ and assume that $\beta_A(r) = \beta_I(r) =: \beta(r)$ and $\delta_A = \delta_I =: \delta$. The endemic equilibrium $x^*_r = (S^*(r),A^*(r),I^*(r))$ is globally asymptotically stable {in $\mathring{\Gamma}$} for the subsystem $r$ of \eqref{sairs3_s} if $\cR_0(r)>1$.
\end{theorem}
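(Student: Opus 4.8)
The plan is to exploit the structural simplification produced by the hypotheses $\beta_A(r)=\beta_I(r)=\beta(r)$ and $\delta_A=\delta_I=\delta$. Setting $B:=A+I$ and adding the second and third equations of \eqref{sairs3_s}, the pair $(S,B)$ satisfies the \emph{closed} planar system
\begin{equation*}
\dot S = \mu+\gamma-\beta(r)\,BS-(\mu+\nu+\gamma)S-\gamma B,
\qquad
\dot B = B\bigl(\beta(r)S-(\delta+\mu)\bigr),
\end{equation*}
while the third equation decouples as $\dot I=\alpha B-(\alpha+\delta+\mu)I$. First I would record that the triangle $\widetilde\Gamma:=\{(S,B):S\ge 0,\ B\ge 0,\ S+B\le 1\}$ is compact and positively invariant for the reduced flow — immediate from Theorem~\ref{invset} since $B=A+I$ — and that $\{B>0\}$ is invariant because $B(t)=B(0)\exp\!\bigl(\int_0^t(\beta(r)S(s)-(\delta+\mu))\,ds\bigr)$. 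I would also check that the only equilibria of the planar system in $\widetilde\Gamma$ are the projection $(S_0,0)$ of the DFE and, when $\beta(r)S_0>\delta+\mu$, an interior point with $S^{\dagger}=(\delta+\mu)/\beta(r)$ and $B^{\dagger}=(\mu+\gamma-(\mu+\nu+\gamma)S^{\dagger})/(\delta+\mu+\gamma)$; computing $\cR_0(r)$ from \eqref{R0} under the present hypotheses shows $\cR_0(r)=\beta(r)S_0/(\delta+\mu)$, so the interior equilibrium exists in $\{S>0,\ B>0\}$ precisely when $\cR_0(r)>1$.

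Next I would rule out periodic orbits with Dulac's criterion using $D(S,B)=1/B$ on the half-plane $\{B>0\}$: a one-line computation gives $\partial_S(D\dot S)+\partial_B(D\dot B)=-\beta(r)-(\mu+\nu+\gamma)/B<0$ throughout $\{B>0\}$, which is simply connected, so there is no periodic orbit and no homoclinic/heteroclinic cycle there. By the Poincaré–Bendixson theorem (finitely many equilibria), the $\omega$-limit set of any orbit starting in $\mathring\Gamma$ — hence with $B(0)>0$ — is a single equilibrium. Since $\cR_0(r)>1$ makes $(S_0,0)$ hyperbolic with a positive eigenvalue in the $B$-direction, its stable set is exactly the invariant segment $\{B=0\}$, which is disjoint from $\{B>0\}$; thus the $\omega$-limit cannot be $(S_0,0)$ and must be $(S^{\dagger},B^{\dagger})$ (a Butler–McGehee argument would serve equally well here). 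Hence $(S(t),B(t))\to(S^{\dagger},B^{\dagger})$.

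Finally I would recover full convergence: with $B(t)\to B^{\dagger}$ the scalar linear equation $\dot I=\alpha B(t)-(\alpha+\delta+\mu)I$ is asymptotically autonomous, and variation of constants gives $I(t)\to \alpha B^{\dagger}/(\alpha+\delta+\mu)=:I^{\dagger}$ (equivalently, invoke the theory of asymptotically autonomous systems), whence $A(t)=B(t)-I(t)\to B^{\dagger}-I^{\dagger}=:A^{\dagger}$. The triple $(S^{\dagger},A^{\dagger},I^{\dagger})$ is an interior endemic equilibrium of \eqref{sairs3_s}, so it coincides with $x^*_r$ by the uniqueness part of Lemma~\ref{ex_ee}; together with the local asymptotic stability asserted there, this gives global asymptotic stability of $x^*_r$ in $\mathring\Gamma$.

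I expect the genuinely delicate points to be (i) making rigorous the exclusion of the DFE as an $\omega$-limit of interior orbits (the hyperbolicity/Butler–McGehee step) and the bookkeeping of invariance of $\widetilde\Gamma$ and of $\{B>0\}$, and (ii) justifying the asymptotically autonomous limit for $I$ (and hence $A$). The planar reduction via $B=A+I$ and the Dulac computation are routine once the hypotheses $\beta_A(r)=\beta_I(r)$, $\delta_A=\delta_I$ are invoked.
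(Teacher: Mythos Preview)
Your argument is sound: the reduction to the planar $(S,B)$ system with $B=A+I$ is exactly what the hypotheses $\beta_A(r)=\beta_I(r)$, $\delta_A=\delta_I$ buy, the Dulac multiplier $1/B$ kills periodic orbits in $\{B>0\}$, and the saddle structure of $(S_0,0)$ together with the invariance of $\{B=0\}$ correctly forces every interior orbit to the unique interior equilibrium; the asymptotically autonomous step for $I$ (hence $A$) is routine, and identification with $x^*_r$ via Lemma~\ref{ex_ee} closes the argument. The computation $\cR_0(r)=\beta(r)S_0/(\delta+\mu)$ and the formula for $(S^\dagger,B^\dagger)$ are correct.

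Note, however, that the paper does not supply its own proof of this theorem: it is quoted, along with Lemmas~\ref{propR0}--\ref{ex_ee} and Theorems~\ref{glob_attr}, \ref{thm:globdiff}, from \cite{ottaviano2021global} (see the sentence preceding Lemma~\ref{propR0}). So there is no in-paper argument to compare against; your Dulac/Poincar\'e--Bendixson route is a standard and perfectly acceptable way to establish the result, and the cited reference may well use either this planar-reduction approach or a Volterra-type Lyapunov function --- both are common for SIRS-type models in this symmetric parameter regime. The ``delicate points'' you flag (excluding $(S_0,0)$ from the $\omega$-limit set, and the asymptotically autonomous passage) are genuine but standard, and you have identified the right tools for them.
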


\begin{theorem}\label{thm:globdiff}
 Let us fix $r\in \cM$, and consider $\beta_A(r) \neq \beta_I(r)$ or $\delta_A \neq \delta_I$. Assume that $\mathcal{R}(r)_0 > 1$ and $\beta_A(r) < \delta_I$. Then, the endemic equilibrium $x^*$ is globally asymptotically stable in $\mathring{\Gamma}$ for the subsystem $r$ of\eqref{sairs_s}.
\end{theorem}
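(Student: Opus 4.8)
The symmetric Goh--Volterra Lyapunov function behind Theorem~\ref{thm:globeq} is no longer available once $\beta_A(r)\neq\beta_I(r)$ or $\delta_A\neq\delta_I$, so the plan is to invoke the geometric (Li--Muldowney) method for global stability of autonomous systems in $\mathbb{R}^3$. Fix $r\in\cM$ and let $J(x)$ be the Jacobian of the vector field $g(\cdot,r)$ in \eqref{sairs3_s}. By Lemma~\ref{ex_ee}, under $\cR_0(r)>1$ the endemic equilibrium $x^{*}=x^{*}_r$ is the unique equilibrium in $\mathring{\Gamma}$ and is locally asymptotically stable, and by Theorem~\ref{invset} the set $\Gamma$ is compact and forward invariant. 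It therefore suffices to prove: (i) there is a compact set $K\subset\mathring{\Gamma}$ that is absorbing for the subsystem $r$; and (ii) a Li--Muldowney Bendixson criterion holds in $\mathring{\Gamma}$, so that there are no nonconstant periodic orbits and no homoclinic or heteroclinic cycles. The Li--Muldowney global-stability theorem then forces every trajectory starting in $\mathring{\Gamma}$ to converge to $x^{*}$.

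For (i) I would establish uniform persistence. Since $\cR_0(r)=\rho(FV^{-1}(r))>1$ and, by Lemma~\ref{spFV}, $(F-V)(r)$ has real spectrum, the matrix $(F-V)(r)$ has a positive eigenvalue, so the DFE $x_0$ is unstable for the subsystem $r$. On the invariant face $\{A=I=0\}\cap\Gamma$ the dynamics reduce to $\dot S=\mu+\gamma-(\mu+\nu+\gamma)S$, all of whose solutions tend to $x_0$; hence the only invariant set on the disease-free boundary of $\Gamma$ is $\{x_0\}$, it is isolated, acyclic, and its stable set meets $\mathring{\Gamma}$ only along that boundary face. Standard acyclicity/persistence results (Butler--Waltman, Freedman--Ruan--Tang) then yield $\varepsilon>0$ with $\liminf_{t\to\infty}A(t)\geq\varepsilon$ and $\liminf_{t\to\infty}I(t)\geq\varepsilon$ for every solution with $A(0),I(0)>0$; combined with forward invariance of the compact set $\Gamma$ this produces the compact absorbing set $K\subset\mathring{\Gamma}$, on which moreover $S\leq S_0<1$.

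For (ii), let $J^{[2]}(x)$ be the second additive compound of $J(x)$, a $3\times3$ matrix, take $P(x)=\diag\!\big(1,\,A/I,\,A/I\big)$, and set
\[
B \;=\; P_f P^{-1}+P\,J^{[2]}\,P^{-1},
\]
where $P_f$ is the derivative of $P$ along the flow. I would then estimate the Lozinskii measure $\mu(B)$ with respect to a mixed norm such as $\|(u,v,w)\|=\max\{|u|,\,|v|+|w|\}$ on $\mathbb{R}^3$. Splitting $B$ into its $1\times1$ and $2\times2$ diagonal blocks and using the $A$- and $I$-equations of \eqref{sairs3_s} to substitute
\[
\beta_I(r)\frac{SI}{A}=\frac{\dot A}{A}+(\alpha+\delta_A+\mu)-\beta_A(r)S,\qquad \alpha\frac{A}{I}=\frac{\dot I}{I}+(\delta_I+\mu),
\]
the block row sums should collapse to a bound $\mu(B)\leq \dot A/A-c$ on $K$ for some $c>0$. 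The only leftover term whose sign is not automatic lives in the $2\times2$ block, where $a_{22}=\beta_A(r)S-(\alpha+\delta_A+\mu)$ is paired with $a_{33}=-(\delta_I+\mu)$; there the hypothesis $\beta_A(r)<\delta_I$ — which on $\Gamma$ gives $\beta_A(r)S\leq\beta_A(r)S_0<\delta_I<\delta_I+\mu$ — is precisely what makes the relevant row sum negative. Integrating along any trajectory in $K$,
\[
\frac1t\int_0^t\mu\big(B(x(s))\big)\,ds\;\leq\;\frac1t\log\frac{A(t)}{A(0)}-c\;\longrightarrow\;-c<0
\]
as $t\to\infty$, uniformly on $K$ by the persistence bounds; this is exactly the Bendixson criterion required.

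The main obstacle is step (ii). Because of the coupling term $\gamma(1-A-I)$ in the $S$-equation, $J^{[2]}$ carries the off-diagonal entries $\pm(\beta_I(r)S+\gamma)$, so the choice of $P$ and of the vector norm must be made so that these $\gamma$-contributions land harmlessly inside the two block row sums; performing the substitutions above and verifying that everything except a $\dot A/A$-term is dominated by $-c$ — and thereby seeing exactly why $\beta_A(r)<\delta_I$, and not merely $\cR_0(r)>1$, is needed — is where the genuine computation lies. Once that estimate is in hand, the proof is completed by feeding (i), the uniqueness and local asymptotic stability of $x^{*}$ from Lemma~\ref{ex_ee}, and (ii) into the Li--Muldowney global stability theorem.
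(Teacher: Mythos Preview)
The paper does not prove this statement itself but defers to \cite{ottaviano2021global}, and your Li--Muldowney geometric argument --- uniform persistence for a compact absorbing set, the matrix $P=\diag(1,A/I,A/I)$, the mixed $\ell^\infty/\ell^1$ norm on $\mathbb{R}^3$, and the use of $\beta_A(r)<\delta_I$ to force the lower $2\times2$ block's row sum to be negative --- is precisely the method used there. One harmless slip: you do not have $S\le S_0$ throughout $K$, only $\limsup_{t\to\infty}S(t)\le S_0$; but since $S\le 1$ on $\Gamma$ the hypothesis already gives $\beta_A(r)S\le\beta_A(r)<\delta_I<\delta_I+\mu$, which is all the estimate requires.
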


\section{Threshold dynamics of the model}\label{thr_dynamics}
By the assumptions \textbf{(H1)} the embedded Markov chain $\{ X_n, n \in \mathbb N\}$, associated to the semi-Markov process $\{ r(t), t \geq 0\}$ has a unique stationary positive probability distribution $\pi=(\pi_1, \ldots, \pi_M)$. Let
\begin{equation*}
m_i=\int_0^\infty [1-F_i(u)] du
\end{equation*}
be the mean sojourn time of $\{ r(t), t \geq 0\}$ in state $i$. Then, by the Ergodic theorem \cite[Thm 2, p. 244]{gikhman2004theory}, we have that for any bounded measurable function $f:(E,\mathcal E) \to (\mathbb R_+, \mathcal B(\mathbb R^+)),$
\begin{equation}\label{ergsemi}
\lim_{t \to \infty} \frac{1}{t} \int_0^t f(r(s)) ds= \frac{\sum_{r \in \cM}f(r) \pi_r m_r}{\sum_{r \in \cM} \pi_r m_r} \qquad \text{a.s.}
\end{equation}

Hereafter, we denote $$\widecheck{\beta} := \max_{r \in M} \{\beta_A(r),\beta_I(r)\}.$$

\subsection{$\beta_A(r)=\beta_I(r):=\beta(r)$, $r=1, \ldots, M$, $\delta_A=\delta_I:=\delta$.\\}

\begin{theorem}\label{ext_switch1}
Let us assume $\beta_A(r)=\beta_I(r):=\beta(r)$, $\delta_A=\delta_I:=\delta$ in each subsystem $r=1, \ldots, M$. If 
\begin{equation*}\label{condext1}
\sum_{r \in \cM} \pi_r m_r \left( \beta(r) \frac{\gamma+\mu}{\nu +\gamma + \mu } -(\delta+\mu)\right) <0,
\end{equation*}


then the solution of system \eqref{sairs3_s} with any initial value $(x(0),\eta(0),r(0)) \in \Gamma \times \mathbb R^+ \times \cM$ satisfies

\begin{align}
    \lim_{t \to +\infty} S(t) &= \frac{\gamma+\mu}{\nu +\gamma + \mu }=:S_0 \qquad \text{a.s.}, \label{cond_1}\\
    \lim_{t \to +\infty} A(t) &=0 \qquad \text{a.s.}, \label{cond_2}\\
    \lim_{t \to +\infty} I(t) &=0 \qquad \text{a.s}. \label{cond_3}
    \end{align}
\end{theorem}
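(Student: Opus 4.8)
The plan is to exploit the special structure that arises when $\beta_A(r)=\beta_I(r)=:\beta(r)$ and $\delta_A=\delta_I=:\delta$, namely that the equation for $B(t):=A(t)+I(t)$ decouples from the precise split between $A$ and $I$. Adding the second and third equations of \eqref{sairs3_s} gives
\begin{equation*}
\frac{dB(t)}{dt}=\bigl(\beta(r(t))S(t)-(\delta+\mu)\bigr)B(t)-\alpha I(t)+\alpha A(t)\cdot 0,
\end{equation*}
so more carefully $\frac{dB}{dt}=\beta(r(t))S(t)B(t)-(\delta+\mu)A(t)-(\delta_I+\mu)I(t)$; using $\delta_A=\delta_I=\delta$ this is exactly $\frac{dB}{dt}=\bigl(\beta(r(t))S(t)-(\delta+\mu)\bigr)B(t)$. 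Since $S(t)\le 1$ on $\Gamma$, and in fact $S(t)\le S_0+o(1)$ will follow, the first step is to bound $S(t)$ from above. From the $S$-equation, $\frac{dS}{dt}\le \mu+\gamma-(\mu+\nu+\gamma)S(t)=(\mu+\nu+\gamma)(S_0-S(t))$, so by a comparison argument $\limsup_{t\to\infty}S(t)\le S_0$ and for any $\eps>0$ we have $S(t)\le S_0+\eps$ for $t$ large.

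\textbf{Extinction of $B=A+I$.} Plugging the bound into the $B$-equation, for $t\ge T_\eps$,
\begin{equation*}
\frac{1}{t}\log\frac{B(t)}{B(T_\eps)}\le \frac{1}{t}\int_{T_\eps}^t\bigl(\beta(r(s))(S_0+\eps)-(\delta+\mu)\bigr)\,ds.
\end{equation*}
Applying the ergodic theorem \eqref{ergsemi} to the bounded function $r\mapsto \beta(r)(S_0+\eps)-(\delta+\mu)$, the right-hand side tends almost surely to
\begin{equation*}
\frac{\sum_{r\in\cM}\pi_r m_r\bigl(\beta(r)(S_0+\eps)-(\delta+\mu)\bigr)}{\sum_{r\in\cM}\pi_r m_r}.
\end{equation*}
By hypothesis the analogous quantity with $\eps=0$ is strictly negative; since it depends continuously on $\eps$, choosing $\eps$ small enough keeps it negative, say equal to $-c<0$. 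Hence $\limsup_{t\to\infty}\frac1t\log B(t)\le -c<0$ a.s., which forces $B(t)\to 0$ a.s., and therefore $A(t)\to 0$ and $I(t)\to 0$ a.s., giving \eqref{cond_2} and \eqref{cond_3}.

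\textbf{Convergence of $S$.} Finally, with $A(t),I(t)\to 0$ a.s., the $S$-equation reads $\frac{dS}{dt}=(\mu+\nu+\gamma)(S_0-S(t))-\bigl(\beta(r(t))B(t)\bigr)S(t)$, where the perturbation term $\beta(r(t))B(t)S(t)$ is bounded by $\widecheck\beta B(t)\to 0$ a.s. A standard argument (e.g.\ comparison with the solutions of $\dot y=(\mu+\nu+\gamma)(S_0-y)\pm\eps$, or Lemma-type results on asymptotically autonomous scalar equations) shows $S(t)\to S_0$ a.s., establishing \eqref{cond_1}.

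\textbf{Main obstacle.} The only delicate point is making the $\eps$-perturbation argument rigorous on a single probability-one event: one must first fix $\eps$, obtain the (random) time $T_\eps$ after which $S(t)\le S_0+\eps$, then apply the ergodic limit \eqref{ergsemi} on the shifted interval $[T_\eps,t]$ and check that replacing the lower limit $0$ by $T_\eps$ does not affect the Cesàro limit (it contributes $O(1/t)$). Since $\Gamma$ is invariant by Theorem \ref{invset}, all trajectories stay bounded, so no integrability issues arise; the argument then goes through for a.e.\ $\omega$ simultaneously by taking a countable sequence $\eps_n\downarrow 0$.
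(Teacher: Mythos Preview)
Your proof is correct and follows essentially the same route as the paper: bound $S$ above by $S_0+\eps$ via comparison, compute $\frac{d}{dt}\ln(A+I)=\beta(r(t))S(t)-(\delta+\mu)$, apply the ergodic theorem \eqref{ergsemi}, and then recover $S(t)\to S_0$ once $A+I\to 0$. One minor slip: in your final step the $S$-equation should read $\frac{dS}{dt}=(\mu+\nu+\gamma)(S_0-S)-\bigl(\beta(r(t))S+\gamma\bigr)B(t)$, i.e.\ there is also a $-\gamma(A+I)$ term coming from $\gamma(1-A-I)$, but since this extra perturbation also vanishes as $B(t)\to 0$ your asymptotically-autonomous argument goes through unchanged.
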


\begin{proof}

We know that for all $\omega \in \Omega$, it holds

\begin{equation*}
\frac{dS(\omega,t)}{dt} \leq \mu+ \gamma  -(\mu+\nu+\gamma) S(\omega,t).   
\end{equation*}

 For different selections of sample point $\omega \in \Omega$, the sample path $S(\omega,t)$ may have different convergence speeds with respect to time $t$. Thus, for any $\omega \in \Omega$ and any constant $\eps >0$, by the comparison theorem, there exists $T(\omega,\eps)>0$, such that for all $t > T$
 
$$S(\omega,t) \leq S_0 + \eps,$$
hence
 \begin{equation}\label{limS2}
\limsup_{t \to \infty} S(t) \leq S_0, \qquad \text{a.s.}
\end{equation}
Based on this consideration, we shall prove assertions \eqref{cond_2} and \eqref{cond_3}. We have that for all $\omega \in \Omega,$ and $t > T$

\begin{align*}
\frac{d \ln(I(t)+A(t))}{dt}& = \beta(r(t))S(t)-(\delta+\mu) \\
&\leq \beta(r(t))(S_0+\eps) -(\delta+\mu)  .
\end{align*}
This implies that 

\begin{equation*}
\ln(I(t)+A(t)) \leq \ln(I(T)+A(T)) + \int_T^t (\beta(r(u))(S_0+\eps)-(\delta+\mu))\; du 
\end{equation*}
from which, by the ergodic result for semi-Markov process \eqref{ergsemi}, we get

\begin{align*}
    \limsup_{t \to \infty} \frac{\ln(I(t)+A(t))}{t}& \leq \limsup_{t \to \infty }\frac{1}{t} \int_T^t (\beta(r(u))(S_0+\eps)-(\delta+\mu))\; du \\
    &= \frac{1}{\sum_{r \in \cM} \pi_r m_r}\bigg[ \sum_{r \in \cM} \pi_r m_r (\beta(r)(S_0+\eps)-(\delta+\mu)) \bigg] \qquad \text{a.s.}
\end{align*}

If 
$ \sum_{r \in \cM} \pi_r m_r(\beta(r)(S_0+\eps)-(\delta+\mu))<0 $, then for sufficiently small $\eps >0$, we have $\sum_{r \in \cM} \pi_r (\beta(r)(S_0+\eps)-(\delta+\mu)) <0$, and consequently 

 \begin{equation}\label{limAI}
     \lim_{t \to +\infty} A(t) =0, \qquad \text{and} \qquad \lim_{t \to +\infty} I(t) =0 \qquad \text{a.s}.
 \end{equation}
 
 Now, we shall prove assertion \eqref{cond_1}. Let $\bar \Omega=\{ \omega \in \Omega : \lim_{t \to +\infty } A(t)=0\} \cap \{ \omega \in \Omega : \lim_{t \to +\infty } I(t)=0\}$. Then, from \eqref{limAI}, $\bP(\bar \Omega)=1$.
Then, for any $\omega \in \bar \Omega$ and any constant $\eps >0$, there exists $T_1(\omega,\eps)>0$, such that for all $t > T_1$
$$A(\omega,t) < \eps, \qquad I(\omega,t) < \eps.$$ Thus,  we have for all $\omega \in \bar \Omega$, and $t > T_1$
 \begin{align*}
\frac{d S(\omega,t)}{dt} &\ge  \mu - \eps \beta(\ste{r}) S(\omega,t) -(\mu + \nu +\gamma) S(\omega,t) + \gamma(1-2\eps)\\
& \ge  \mu - \eps \widecheck \beta S(\omega,t) -(\mu + \nu +\gamma) S(\omega,t) + \gamma(1-2\eps).
\end{align*}
Following the same arguments in the proof of Theorem \ref{glob_attr}, we can assert that

$$\liminf_{t\to\infty}S(\omega,t)\ge S_0, \qquad \omega \in \bar \Omega.$$
Recalling that $P(\bar \Omega)=1$, we have
$$\liminf_{t\to\infty}S(t)\ge S_0 \qquad \text{a.s.}$$
that combined with \eqref{limS2} gives us that
$$\lim_{t \to \infty} S(t)=S_0 \qquad \text{a.s.}$$ 
\end{proof}

Thus, under the condition of Theorem \ref{ext_switch1}, we can say that any positive solution of system \eqref{sairs3_s} converges exponentially to the disease-free state $x_0=(S_0,0,0)$ almost surely.

Based on the definition in \cite{bacaer2013basic,li2019threshold}, the \emph{basic reproduction number}, for our model \eqref{sairs3_s} with $\beta_A(r)=\beta_I(r):=\beta(r)$, $\delta_A=\delta_I:=\delta$, in the semi-Markov random environment, can be written from Theorem \ref{ext_switch1} as
\begin{equation}\label{r0semi}
\cR_0=\frac{\sum_{r \in \cM} \pi_r m_r \beta(r) S_0}{\sum_{r \in \cM}\pi_r m_r (\delta+\mu)}.
\end{equation}
We notice that we would have arrived to the same result if we had followed the same arguments as in \cite{li2019threshold}, that are based on the theory of basic reproduction in random environment in \cite{bacaer2013basic}.

\begin{remark}
It easy to see that in the case of Markov-switching, that is for the exponential holding time distribution in each regime, the basic reproduction number for our model \eqref{sairs3_s} with $\beta_A(r)=\beta_I(r):=\beta$, $\delta_A=\delta_I:=\delta$ is
 \begin{equation*}\label{R0equal}
\cR_0=\frac{\sum_{r \in \cM} \pi_r  \beta(r) S_0}{\sum_{r \in \cM}\pi_r (\delta+\mu)}=\frac{\sum_{r \in \cM} \pi_r  \beta(r) S_0}{ (\delta+\mu)}.
\end{equation*}
\end{remark}

\begin{proposition}\label{prop:ROsemi}
From \eqref{r0semi}, the following alternative conditions are valid
\begin{itemize}
    \item [(i)] $\cR_0 <1$ if and only if $\sum_{r \in \cM} \pi_r m_r (\beta(r) S_0-(\delta+\mu)) <0$,
    \item [(ii)] $\cR_0 >1$ if and only if $\sum_{r \in \cM} \pi_r m_r (\beta(r) S_0-(\delta+\mu)) >0$.
\end{itemize}
\end{proposition}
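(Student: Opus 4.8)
The plan is to reduce everything to a division by a strictly positive real number, since \eqref{r0semi} expresses $\cR_0$ as a ratio whose denominator is manifestly positive. First I would record the relevant consequences of Assumptions \textbf{(H1)}: by (i) the embedded chain admits a unique stationary distribution $\pi=(\pi_1,\dots,\pi_M)$ with $\pi_r>0$ for every $r\in\cM$; by (ii) each $F_r$ has a continuous positive density, so $1-F_r(u)>0$ on a set of positive Lebesgue measure and hence $m_r=\int_0^\infty[1-F_r(u)]\,du>0$; and by (iii) one has $1-F_r(u)\le e^{-\eps_r u}$ for all $u\ge0$, so $m_r\le 1/\eps_r<\infty$. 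Therefore the quantity $D:=\sum_{r\in\cM}\pi_r m_r(\delta+\mu)=(\delta+\mu)\sum_{r\in\cM}\pi_r m_r$ lies in $(0,\infty)$, and likewise $N:=\sum_{r\in\cM}\pi_r m_r\beta(r)S_0\in[0,\infty)$.

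Next I would simply divide. Writing $\cR_0=N/D$ with $D>0$, we get $\cR_0<1\iff N<D\iff N-D<0$, and since
\[
N-D=\sum_{r\in\cM}\pi_r m_r\bigl(\beta(r)S_0-(\delta+\mu)\bigr),
\]
this is precisely assertion (i). Assertion (ii) follows in exactly the same way with the inequalities reversed, using $\cR_0>1\iff N>D\iff N-D>0$. Finally, because $\cR_0=1\iff N=D$ and the three cases $N<D$, $N=D$, $N>D$ are mutually exclusive and exhaustive, (i) and (ii) together account for all positions of $\cR_0$ relative to $1$, which justifies calling these ``alternative conditions''.

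There is essentially no obstacle in this argument; the only point that merits a line of justification is the strict positivity (and finiteness) of $D$, which is exactly where Assumptions \textbf{(H1)} — irreducibility of $(p_{i,j})$ for $\pi_r>0$, and the density and lower-bound conditions for $0<m_r<\infty$ — are invoked. Once $D>0$ is in hand, the proposition is just the elementary fact that dividing both sides of an inequality by a positive number preserves it.
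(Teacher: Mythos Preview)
Your proposal is correct and matches the paper's own view of the matter: the paper simply states ``The proof is immediate, so it is omitted.'' Your argument is exactly the elementary one intended --- positivity of the denominator $\sum_{r\in\cM}\pi_r m_r(\delta+\mu)$ followed by subtraction --- with the added merit that you spell out why $0<m_r<\infty$ and $\pi_r>0$ under Assumptions \textbf{(H1)}.
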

The proof is immediate, so it is omitted.\\

\subsection{$\beta_A(r)\neq \beta_I(r)$, or $r=1, \ldots, M$, $\delta_A\neq\delta_I$}

Let us define

\begin{equation*}
    \widecheck \beta(r)=\max \{ \beta_A(r), \beta_I(r)\},  \qquad \hat \delta=\min \{ \delta_A, \delta_I\}.
\end{equation*}
\begin{theorem}\label{thm:extdiff1}
Let $\beta_A(r)\neq \beta_I(r)$, or $\delta_A\neq\delta_I$ in each subsystem $r=1,\ldots, M$. If
\begin{equation}\label{condextdiff1}
  \sum_{r \in \cM} \pi_r m_r (\widecheck \beta(r)S_0-(\hat \delta+\mu))<0,  
\end{equation}
then the solution of system \eqref{sairs3_s} with any initial value $(x(0),\eta(0),r(0)) \in \Gamma \times \mathbb R^+ \times \cM$ satisfies

\begin{align}
    \lim_{t \to +\infty} S(t) &= \frac{\gamma+\mu}{\nu +\gamma + \mu }=:S_0 \qquad \text{a.s.}, \label{cond_21}\\
    \lim_{t \to +\infty} A(t) &=0 \qquad \text{a.s.}, \label{cond_22}\\
    \lim_{t \to +\infty} I(t) &=0 \qquad \text{a.s}. \label{cond_23}
    \end{align}
\end{theorem}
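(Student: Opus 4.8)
The plan is to mimic the proof of Theorem \ref{ext_switch1}, replacing the exact exponential rate $\beta(r(t))S(t)-(\delta+\mu)$ governing $\ln(A+I)$ by an upper bound valid in the asymmetric case. First I would recall that, exactly as in the symmetric case, for every $\omega\in\Omega$ one has $\frac{dS}{dt}\le \mu+\gamma-(\mu+\nu+\gamma)S$, so by the comparison theorem $\limsup_{t\to\infty}S(t)\le S_0$ a.s.; hence for any $\eps>0$ there is a (random) $T$ with $S(t)\le S_0+\eps$ for $t>T$. Then I would estimate the growth of the infected compartment: adding the $A$ and $I$ equations of \eqref{sairs3_s},
\begin{equation*}
\frac{d(A(t)+I(t))}{dt}=\big(\beta_A(r(t))A(t)+\beta_I(r(t))I(t)\big)S(t)-(\delta_A+\mu)A(t)-(\delta_I+\mu)I(t),
\end{equation*}
and bound the right-hand side above by $\big(\widecheck\beta(r(t))S(t)-(\hat\delta+\mu)\big)(A(t)+I(t))$, using $\beta_A(r),\beta_I(r)\le\widecheck\beta(r)$ and $\delta_A+\mu,\delta_I+\mu\ge\hat\delta+\mu$. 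This gives $\frac{d\ln(A(t)+I(t))}{dt}\le\widecheck\beta(r(t))S(t)-(\hat\delta+\mu)\le\widecheck\beta(r(t))(S_0+\eps)-(\hat\delta+\mu)$ for $t>T$.

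Next I would integrate from $T$ to $t$, divide by $t$, and apply the ergodic theorem for semi-Markov processes \eqref{ergsemi} to the bounded measurable function $r\mapsto\widecheck\beta(r)(S_0+\eps)-(\hat\delta+\mu)$, obtaining
\begin{equation*}
\limsup_{t\to\infty}\frac{\ln(A(t)+I(t))}{t}\le\frac{1}{\sum_{r\in\cM}\pi_r m_r}\sum_{r\in\cM}\pi_r m_r\big(\widecheck\beta(r)(S_0+\eps)-(\hat\delta+\mu)\big)\qquad\text{a.s.}
\end{equation*}
By hypothesis \eqref{condextdiff1} the sum with $\eps=0$ is strictly negative, so by continuity there is a sufficiently small $\eps>0$ making the bound strictly negative; this forces $A(t)+I(t)\to0$ exponentially, hence \eqref{cond_22} and \eqref{cond_23}, a.s.

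Finally, for \eqref{cond_21} I would argue exactly as in Theorem \ref{ext_switch1}: on the full-measure event $\bar\Omega$ where $A(t),I(t)\to0$, pick $T_1$ with $A(t),I(t)<\eps$ for $t>T_1$, so that $\frac{dS}{dt}\ge\mu-\eps\widecheck\beta S-(\mu+\nu+\gamma)S+\gamma(1-2\eps)$ (using $\beta(r)\le\widecheck\beta$), and a comparison argument as in the proof of Theorem \ref{glob_attr} gives $\liminf_{t\to\infty}S(t)\ge S_0$ a.s.; combined with the earlier $\limsup$ bound this yields $\lim_{t\to\infty}S(t)=S_0$ a.s. I do not expect a serious obstacle here; the only genuinely new point compared with Theorem \ref{ext_switch1} is the inequality bounding $\frac{d\ln(A+I)}{dt}$ by $\widecheck\beta(r(t))S(t)-(\hat\delta+\mu)$, and one should check carefully that the coefficient-wise domination $\beta_A(r)A+\beta_I(r)I\le\widecheck\beta(r)(A+I)$ together with $(\delta_A+\mu)A+(\delta_I+\mu)I\ge(\hat\delta+\mu)(A+I)$ indeed holds for all nonnegative $A,I$, which it does since all the compartments are nonnegative by Theorem \ref{invset}.
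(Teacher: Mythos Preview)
Your proposal is correct and follows essentially the same approach as the paper's own proof: both bound $\frac{d\ln(A+I)}{dt}$ above by $\widecheck\beta(r(t))(S_0+\eps)-(\hat\delta+\mu)$ via the coefficient-wise inequalities you spell out, then invoke the ergodic result \eqref{ergsemi} and finish with the same $\liminf$ argument for $S(t)$ as in Theorem \ref{ext_switch1}. If anything, you are more explicit than the paper in justifying the key differential inequality, which the paper states directly without comment.
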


\begin{proof}
Let us prove conditions \eqref{cond_22} and \eqref{cond_23}. By using equation \eqref{limS2}, we have that for all $\omega \in \Omega,$ and $t > T$

\begin{align*}
\frac{d \ln(I(t)+A(t))}{dt}& \leq \widecheck \beta(r(t))(S_0+\eps)-(\hat \delta-\mu). \\
\end{align*}
By the same arguments as in Theorem \ref{ext_switch1}, we obtain that

\begin{align*}
    \limsup_{t \to \infty} \frac{\ln(I(t)+A(t))}{t}& \leq 
     \frac{1}{\sum_{r \in \cM} \pi_r m_r}\bigg[ \sum_{r \in \cM} \pi_r m_r (\widecheck \beta(r)(S_0+\eps)-(\hat \delta+\mu)) \bigg] \qquad \text{a.s.}
\end{align*}

Thus, if 
$ \sum_{r \in \cM} \pi_r m_r(\widecheck \beta(r)S_0-(\delta+\mu))<0 $, then for sufficiently small $\eps >0$, we have $\sum_{r \in \cM} \pi_r (\widecheck \beta(r)(S_0+\eps)-(\delta+\mu)) <0$, and consequently 

 \begin{equation*}
     \lim_{t \to +\infty} A(t) =0, \qquad \text{and} \qquad \lim_{t \to +\infty} I(t) =0 \qquad \text{a.s}.
 \end{equation*}

To prove assertion \eqref{cond_21}, we follow the same steps as in Theorem \eqref{ext_switch1}, by considering that $\beta_A(r)$ and $\beta_I(r)$ are less than or equal to $\widecheck\beta$.
\end{proof}

With a different proof we can find another sufficient condition for the extinction of the disease.
\begin{theorem}\label{thm:extdiff2}
Let $\beta_A(r)\neq \beta_I(r)$ or $\delta_A\neq\delta_I$ in each subsystem $r=1,\ldots, M$, and let $B(r)=(F-V)(r)$ as in \eqref{FminusV1}. If
\begin{equation}\label{condext2}
\sum_{r \in \cM}  \pi_r m_r \lambda_1(B(r)+B(r)^T)<0,
\end{equation}
 where $\lambda_1$ is the maximum eigenvalue, then the solution of system \eqref{sairs3_s} with any initial value $(x(0),\eta(0),r(0)) \in \Gamma \times \mathbb R^+ \times \cM$ satisfies
\begin{align}
    \lim_{t \to +\infty} S(t) &= \frac{\gamma+\mu}{\nu +\gamma + \mu } \qquad \text{a.s.}, \label{cond1_1}\\
    \lim_{t \to +\infty} A(t) &=0 \qquad \text{a.s.}, \label{cond1_2}\\
    \lim_{t \to +\infty} I(t) &=0 \qquad \text{a.s.} \label{cond1_3}
    \end{align}
\end{theorem}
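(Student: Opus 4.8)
The plan is to compare the $(A,I)$-subsystem with a linear switched system governed by the matrices $B(r) = (F-V)(r)$ and to use the ergodic average of the top eigenvalues of the symmetrized matrices $B(r)+B(r)^T$ to control the growth rate of $\|(A(t),I(t))\|^2$. First I would recall from \eqref{limS2} that $\limsup_{t\to\infty} S(t) \le S_0$ almost surely, so that for any $\eps>0$ there is a (random) time $T$ after which $S(t) \le S_0 + \eps$. Writing $y(t) = (A(t),I(t))^T$, from \eqref{sairs3_s} one has, for $t > T$, the componentwise differential inequality $\dot y(t) \le B_\eps(r(t)) \, y(t)$, where $B_\eps(r)$ is the matrix $(F-V)(r)$ with $S_0$ replaced by $S_0+\eps$; here I would be careful to note that $B_\eps(r)$ has nonnegative off-diagonal entries, so the associated flow is order-preserving and the differential inequality does propagate to the solutions. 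Hence $y(t) \le z(t)$ where $\dot z = B_\eps(r(t)) z$, $z(T) = y(T)$, and it suffices to show $z(t) \to 0$.

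For the linear switched system, the key computation is $\frac{d}{dt}\|z(t)\|^2 = z(t)^T \big(B_\eps(r(t)) + B_\eps(r(t))^T\big) z(t) \le \lambda_1\!\big(B_\eps(r(t)) + B_\eps(r(t))^T\big)\, \|z(t)\|^2$, using the Rayleigh quotient bound for the symmetric matrix $B_\eps(r)+B_\eps(r)^T$. Integrating, $\ln\|z(t)\| \le \ln\|z(T)\| + \tfrac12\int_T^t \lambda_1\!\big(B_\eps(r(u))+B_\eps(r(u))^T\big)\,du$. Applying the ergodic theorem \eqref{ergsemi} to the bounded measurable function $r \mapsto \lambda_1(B_\eps(r)+B_\eps(r)^T)$ gives
\[
\limsup_{t\to\infty}\frac{\ln\|z(t)\|}{t} \le \frac{1}{2\sum_{r\in\cM}\pi_r m_r}\sum_{r\in\cM}\pi_r m_r\,\lambda_1\!\big(B_\eps(r)+B_\eps(r)^T\big)\qquad\text{a.s.}
\]
By continuity of the largest eigenvalue in the entries of the matrix, and since $B_\eps(r)\to B(r)$ as $\eps\to0$, the right-hand side converges to $\tfrac12(\sum_r\pi_r m_r)^{-1}\sum_r \pi_r m_r \lambda_1(B(r)+B(r)^T)$, which is negative by hypothesis \eqref{condext2}; so for $\eps$ small enough the ergodic average stays strictly negative. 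Therefore $\|z(t)\|\to 0$ exponentially a.s., which forces $A(t)\to 0$ and $I(t)\to 0$ a.s., proving \eqref{cond1_2} and \eqref{cond1_3}.

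Finally, for \eqref{cond1_1} I would repeat verbatim the argument from the end of the proof of Theorem \ref{ext_switch1}: on the full-measure event $\bar\Omega$ where $A(t),I(t)\to0$, for every $\eps>0$ eventually $A(t),I(t)<\eps$, so $\dot S(t) \ge \mu + \gamma(1-2\eps) - (\eps\widecheck\beta + \mu+\nu+\gamma) S(t)$; a comparison argument as in Theorem \ref{glob_attr} yields $\liminf_{t\to\infty} S(t) \ge S_0$ a.s., and together with \eqref{limS2} this gives $S(t)\to S_0$ a.s. The main obstacle is the first step: justifying rigorously that the scalar Lyapunov-type estimate on $\|z\|^2$ combined with the order-preserving comparison $y(t)\le z(t)$ is legitimate — one must check that $(F-V)(r)$ (and its $\eps$-perturbation) is a Metzler matrix, i.e.\ has nonnegative off-diagonal entries, which holds here since the off-diagonal entries are $\beta_I(r)S_0 \ge 0$ and $\alpha \ge 0$. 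Everything after that is a routine application of \eqref{ergsemi} and eigenvalue continuity.
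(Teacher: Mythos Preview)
Your proposal is correct and follows essentially the same route as the paper: bound $S(t)\le S_0+\eps$, pass to the linear comparison system $\dot z=B_\eps(r(t))z$, control $\ln\|z\|$ via the Rayleigh quotient bound $\tfrac12\lambda_1(B_\eps(r)+B_\eps(r)^T)$, and finish with the ergodic theorem \eqref{ergsemi} and the $S\to S_0$ argument from Theorem~\ref{ext_switch1}. If anything, you are slightly more careful than the paper in explicitly invoking the Metzler (quasi-monotone) structure of $B_\eps(r)$ to justify the componentwise comparison $y(t)\le z(t)$ and in spelling out the continuity of $\lambda_1$ in $\eps$.
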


\begin{proof}
By following the same arguments as in the proof of Theorem \ref{ext_switch1}, we know that \eqref{limS2} holds.
 Thus, we have that for any $\omega \in \Omega$ and any constant $\eps >0$, there exists $T(\omega,\eps)>0$, such that for all $t > T$ 
\begin{align*}
    \frac{d A(t)}{dt} & \leq \bigg(\beta_A(r(t))A(t) + \beta_I(r(t))I(t)\bigg)(S_0 + \eps) -(\alpha + \delta_A +\mu) A(t),  \\
     \frac{d I(t)}{dt} &= \alpha A(t) - (\delta_I + \mu)I(t). \nonumber 
    \end{align*}
We shall now prove assertions \eqref{cond1_2} and \eqref{cond1_3}, by considering the comparison system
\begin{align*}\label{dw1}
    \frac{d w_1(t)}{dt} & = \bigg(\beta_A(r(t)) w_1(t) + \beta_I(r(t)) w_2(t)\bigg)(S_0 + \eps) -(\alpha + \delta_A +\mu) w_1(t),  \\
     \frac{d w_2(t)}{dt} &= \alpha w_1(t) - (\delta_I + \mu)w_2(t), \qquad w_1( \bar T)=A( T),\quad w_2(  T)=I( \bar T) \nonumber 
    \end{align*}
    Let $w(t)=(w_1(t),w_2(t))^T$ and consider the function $V(w(t))= \ln ||w(t)||_2  $. Then, let $B_{\eps}(r)=(F_{\eps}-V_{\eps})(r)$ the matrix in \eqref{FminusV1}, computed in $x_0(\eps)=(S_0+\eps, 0,0)$. Then, we have
    
    \begin{align*}
    \frac{dV(w(t))}{dt}&= \frac{1}{||w(t)||^2_2} \langle w(t), \dot w(t)\rangle = \frac{w(t)^T}{||w(t)||_2} B_\eps(r)\frac{w(t)}{||w(t)||_2}  \\
    &= \frac{w(t)^T}{||w(t)||_2}   \frac{\big((B_\eps)(r) + (B_\eps)^T(r) \big)}{2} \frac{w(t)}{||w(t)||_2}\\ &\leq \frac{\lambda_1\big((B_\eps)(r) + (B_\eps)^T(r) \big)}{2}.   
    \end{align*}
    By the same arguments in Theorem \ref{ext_switch1}, invoking \eqref{ergsemi}, assertions \eqref{cond1_2} and \eqref{cond1_3} follows, and consequently \eqref{cond1_1}.

\end{proof}

\begin{remark}
 In the case of Markov-switching, the condition \eqref{condext2} becomes
 $$\sum_{r \in \cM}  \pi_r \lambda_1(B(r)+B(r)^T)<0.$$
\end{remark}

\begin{remark}
 Let us fix $r \in \cM$. Let us consider condition \eqref{condextdiff1} and $\cR_0(r)$ in \eqref{propR0}. It is easy to see that it holds
 
$$ \widecheck \beta(r)S_0-(\hat \delta+\mu)<0 \Rightarrow \cR_0(r) < 1$$

 Now, let us consider condition \eqref{condext2}. We have that
 
 $$\lambda_1(B(r)+B(r)^T)=\beta_A(r)S_0-(\alpha+\delta_A+\mu)-(\delta_I+\mu)+ \sqrt{(\beta_A(r)S_0 -(\alpha+\delta_A)+ \delta_I)^2+(\beta_I(r)S_0 + \alpha)^2}.$$
  From this and from \eqref{R0}, it is easy to see that
  $$\lambda_1(B(r)+B(r)^T)<0 \Rightarrow \cR_0(r) < 1,$$ 
  and that if $\beta_I(r)S_0=\alpha$, it holds
  $$\lambda_1(B(r)+B(r)^T)<0 \Leftrightarrow \cR_0(r) < 1.$$ \\\\
\end{remark}
In Section \ref{num_exp}, we will compare numerically the two conditions \eqref{condextdiff1} and \eqref{condext2}, by showing a case in which condition \eqref{condextdiff1} is satisfied but \eqref{condext2} does not, and the other case in which the vice versa occurs. Thus, it is sufficient that one of the two conditions is verified to ensure the almost sure extinction.

\section{Persistence}
In this section, we investigate the persistence in time mean of the disease.
\begin{definition}
We say that system \eqref{sairs3_s} is \emph{almost surely persistent in the time mean}, if
\begin{equation*}
    \liminf_{t \to \infty} \frac{1}{t} \int_0^t S(u) du > 0 \qquad \liminf_{t \to \infty} \frac{1}{t} \int_0^t (I(u)+A(u)) du > 0,
\end{equation*}
with probability one.
\end{definition}

Let us remark that $I+A$ denote the fraction of individuals that may infect the susceptible population.

\begin{theorem}\label{pers1}
 Let us assume $\beta_A(r)=\beta_I(r):=\beta(r)$, $\delta_A=\delta_I:=\delta$ in each subsystem $r=1, \ldots, M$. If $\cR_0>1$, then for any initial value $(x(0),\eta(0),r(0)) \in \mathring \Gamma \times \mathbb R^+ \times \cM$, the following statement is valid with probability 1:
 \begin{equation}\label{persS}
   \liminf_{t \to \infty} \frac{1}{t} \int_0^t S(u) du \geq \frac{\mu}{\mu+\nu+\widecheck \beta}, 
 \end{equation}
 
 \begin{equation}\label{persAI}
     \liminf_{t \to \infty} \frac{1}{t} \int_0^t (I(u)+A(u)) du \geq \frac{\mu+\nu+\gamma}{\widecheck \beta(\widecheck \beta+\gamma)}\frac{\sum_{r \in \cM} \pi_r m_r (\beta(r)S_0-(\delta+\mu))}{\sum_{r \in \cM} \pi_r m_r}.
 \end{equation}
 \end{theorem}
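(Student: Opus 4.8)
The plan is to prove the two bounds separately; in each case the idea is to integrate the relevant equation of \eqref{sairs3_s}, divide by $t$, and let $t\to\infty$ using the ergodic identity \eqref{ergsemi}. For \eqref{persS}: from the first equation of \eqref{sairs3_s}, using $1-A-I=S+R\ge S$ together with $A+I\le1$ and $S\le1$, one gets the differential inequality $\dot S\ge\mu-(\mu+\nu+\widecheck\beta)S$; a comparison argument then yields $\liminf_{t\to\infty}S(t)\ge\mu/(\mu+\nu+\widecheck\beta)$ almost surely, and \eqref{persS} follows a fortiori.

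For \eqref{persAI}, the key observation, valid precisely because $\beta_A=\beta_I=\beta$ and $\delta_A=\delta_I=\delta$, is that $P:=A+I$ solves the scalar equation $\dot P=P\big(\beta(r(t))S-(\delta+\mu)\big)$, hence $\tfrac{d}{dt}\ln P=\beta(r(t))S-(\delta+\mu)$; here $P(t)>0$ for all $t$ since $P(0)>0$ (the initial datum lies in $\mathring\Gamma$) and $P(t)\le1$ by invariance of $\Gamma$ (Theorem \ref{invset}). I would integrate this, divide by $t$, write $S=S_0-(S_0-S)$, and pass to the limit: using $\limsup_{t\to\infty}\frac{\ln P(t)}{t}\le0$ (because $P\le1$) and $\frac1t\int_0^t\beta(r(u))S_0\,du\to S_0\frac{\sum_{r\in\cM}\pi_r m_r\beta(r)}{\sum_{r\in\cM}\pi_r m_r}$ from \eqref{ergsemi}, this gives
$$\liminf_{t\to\infty}\frac1t\int_0^t\beta(r(u))\big(S_0-S(u)\big)\,du\ \ge\ c:=\frac{\sum_{r\in\cM}\pi_r m_r(\beta(r)S_0-(\delta+\mu))}{\sum_{r\in\cM}\pi_r m_r},$$
and $c>0$ by $\cR_0>1$ together with Proposition \ref{prop:ROsemi}. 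Next, invoking $\limsup_{t\to\infty}S(t)\le S_0$ (the estimate \eqref{limS2}, which holds unconditionally) and $\beta(r)\le\widecheck\beta$, a short $\eps$-argument shows $\beta(r(u))(S_0-S(u))\le\widecheck\beta(S_0-S(u))+\widecheck\beta\eps$ for $u$ large, whence $\liminf_{t\to\infty}\frac1t\int_0^t(S_0-S(u))\,du\ge c/\widecheck\beta$. Finally, rewriting the $S$-equation as $\dot S=(\mu+\nu+\gamma)(S_0-S)-(\gamma+\beta(r(t))S)P$ (using $\mu+\gamma=(\mu+\nu+\gamma)S_0$), bounding $(\gamma+\beta(r(t))S)P\le(\gamma+\widecheck\beta)P$, then integrating and dividing by $t$ gives $\liminf_{t\to\infty}\frac1t\int_0^t P\ge\frac{\mu+\nu+\gamma}{\gamma+\widecheck\beta}\liminf_{t\to\infty}\frac1t\int_0^t(S_0-S)\ge\frac{(\mu+\nu+\gamma)\,c}{\widecheck\beta(\gamma+\widecheck\beta)}$, which is exactly \eqref{persAI}.

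The main obstacle is that the sign of $S_0-S(u)$ is not controlled along trajectories (it can be slightly negative), so the pointwise estimate $\beta(r(u))(S_0-S(u))\le\widecheck\beta(S_0-S(u))$ fails; this is circumvented by the unconditional bound $\limsup_{t\to\infty}S(t)\le S_0$ and an arbitrarily small $\eps$-correction. The other delicate point is organizing the chain of three $\liminf$/$\limsup$ inequalities (for $\ln P$, for $\int_0^t(S_0-S)$, and for $\int_0^t P$) in the correct order and carrying the constant $S_0=(\mu+\gamma)/(\mu+\nu+\gamma)$ cleanly through the algebra; everything else is routine comparison and ergodic averaging.
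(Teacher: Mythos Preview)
Your proof is correct and follows essentially the same strategy as the paper: for \eqref{persS} a differential inequality for $S$ (the paper integrates it directly, you pass through a pointwise comparison, which even gives the slightly stronger $\liminf_{t\to\infty}S(t)\ge\mu/(\mu+\nu+\widecheck\beta)$), and for \eqref{persAI} the identity $\tfrac{d}{dt}\ln(A+I)=\beta(r)S-(\delta+\mu)$ combined with the rewriting $\dot S=(\mu+\nu+\gamma)(S_0-S)-(\gamma+\beta(r)S)(A+I)$ and the ergodic theorem. The only organizational difference is that the paper substitutes the bound $(\nu+\gamma+\mu)\int_0^t(S_0-S)\le S(t)-S(0)+(\widecheck\beta+\gamma)\int_0^t(A+I)$ directly into the inequality for $\ln(A+I)$, whereas you pass through the intermediate quantity $\liminf\tfrac1t\int_0^t(S_0-S)$; the resulting constants are identical.

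One point worth noting: the paper's version of the step $-\int_0^t\beta(r(u))(S_0-S(u))\,du\ge-\widecheck\beta\int_0^t(S_0-S(u))\,du$ tacitly treats $S_0-S(u)$ as nonnegative, while you correctly flag that this need not hold pointwise and supply the $\eps$-correction via $\limsup_{t\to\infty}S(t)\le S_0$. Your argument here is the more careful of the two.
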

 
\begin{proof}
For ease of notation, we will omit the dependence on $\omega$ and that on $t$ if not necessary. Since $A+I \leq 1$, from the first equation of system \eqref{sairs_s}, we have
\begin{equation*}
 \frac{dS(t)}{dt} \geq \mu-(\mu+\nu) S - \beta(r)(A+I) S \geq \mu-[(\mu+\nu)+\beta(r)]S,
\end{equation*}
integrating the above inequality and dividing both sides by $t$, we obtain 
\begin{equation}\label{intt}
   \frac{1}{t}(\mu+\nu) \int_0^t S(u) du + \frac{1}{t}\int_0^t \beta(r(u))S(u) du \geq \mu - \frac{S(t)-S(0)}{t}.
\end{equation}
 Then, for all $\omega \in \Omega$ it holds
 \begin{equation}\label{stso}
   \lim_{t \to +\infty}\frac{S(t)-S(0)}{t}=0, \qquad \text{a.s.}   
 \end{equation}
From \eqref{intt}, it follows
$$\liminf_{t \to +\infty} \int_0^t S(u) ds \geq \frac{\mu}{\mu+\nu+\widecheck \beta}, \qquad \text{a.s.}$$
and assertion \eqref{persS} is proved. \\
Next, we will prove assertion \eqref{persAI}. By summing the second and third equation of \eqref{sairs3_s}, we have that

\begin{equation*}
\frac{d \ln(I(t)+A(t))}{dt}= \beta(r)S- \delta - \mu ,    
\end{equation*}
from which, integrating both sides
\begin{equation}\label{lnIA}
\begin{split}
\ln (I(t)+A(t))&=\ln(I(0)+A(0)) + \int_0^t \beta(r(u)) S(u) du -\int_0^t (\delta+\mu) du\\ 
&= \ln(I(0)+A(0)) +  \int_0^t \beta(r(u))\frac{\gamma+\mu}{\nu+\gamma+ \mu} du - \int_0^t \beta(r(u))\left(\frac{\gamma+\mu}{\nu+\gamma+ \mu} -S(u)\right) du -\int_0^t (\delta+\mu) du \\
& \geq \ln(I(0)+A(0)) +  \int_0^t \beta(r(u))\frac{\gamma+\mu}{\nu+\gamma+ \mu} du -\widecheck \beta \int_0^t \left(\frac{\gamma+\mu}{\nu+\gamma+ \mu} -S(u)\right) du -\int_0^t (\delta+\mu) du.
\end{split}
\end{equation}
Now, we have that
\begin{equation}\label{ds2}
\begin{split}
\frac{d S(t)}{dt} &= \mu-(\mu+\nu)S -\beta(r) (A+I)S +\gamma -\gamma S -\gamma (I+A) \\
& \geq(\nu+ \gamma +\mu) \left( \frac{\gamma + \mu}{\nu+ \gamma +\mu}-S\right)-(\beta(r)+\gamma)(I+A)\\
&\geq (\nu+ \gamma +\mu) \left( \frac{\gamma + \mu}{\nu+ \gamma +\mu}-S\right)-(\widecheck \beta+\gamma)(I+A),
\end{split}    
\end{equation}
from which, integrating both sides,
\begin{equation}\label{sintdis}
\begin{split}
(\nu+ \gamma +\mu) \int_0^t \left( \frac{\gamma + \mu}{\nu+ \gamma +\mu}-S(u)\right) du \leq S(t)-S(0) + (\widecheck \beta+\gamma) \int_0^t (I(u)+A(u)) du.
\end{split}    
\end{equation}
Combining \eqref{lnIA} with \eqref{sintdis}, we obtain
\begin{equation}\label{lnIAdis}
\begin{split}
  \ln (I(t)+A(t))& \geq \ln(I(0)+A(0)) + \int_0^t \left(\beta(r(u))\frac{\gamma+\mu}{\nu+\gamma+ \mu} -(\delta +\mu)\right) du\\
  & -\frac{\widecheck \beta}{\nu+\gamma+ \mu}\left[ S(t)-S(0) +(\widecheck \beta +\gamma) \int_0^t (I(u)+A(u)) du\right].
  \end{split}
\end{equation}
For all $\omega \in \Omega$, \eqref{stso} holds, moreover it is easy to see that
$$\limsup_{t \to +\infty} \frac{\ln (I(t)+A(t))}{t} \leq 0,$$
thus from \eqref{lnIAdis}, and the ergodic result \eqref{ergsemi}, we get
$$\liminf_{t \to +\infty} \frac{1}{t} \int_0^t (I(u)+A(u)) du \geq \frac{\nu+\gamma+ \mu}{\widecheck \beta(\widecheck \beta+\gamma)}\frac{\sum_{r \in \cM} \pi_r m_r (\beta(r)S_0-(\delta+\mu))}{\sum_{r \in \cM} \pi_r m_r}, \qquad \text{a.s.}$$
that is assertion \eqref{persAI}.
\end{proof} 
 
Thus, by (ii) of Proposition \ref{prop:ROsemi}, we conclude that the disease is persistent in the time mean with probability 1. \\

\begin{corollary}\label{cor:persI}
Let us assume $\beta_A(r)=\beta_I(r):=\beta(r)$, $\delta_A=\delta_I:=\delta$ in each subsystem $r=1, \ldots, M$. If $\cR_0>1$, then for any initial value $(x(0),\eta(0),r(0)) \in \mathring \Gamma \times \mathbb R^+ \times \cM$, the following statements hold with probability 1:
\begin{equation}\label{persI}
     \liminf_{t \to \infty} \frac{1}{t} \int_0^t I(u) du \geq \frac{\alpha}{\alpha+\delta+\mu}\frac{\mu+\nu+\gamma}{\widecheck \beta(\widecheck \beta+\gamma)}\frac{\sum_{r \in \cM} \pi_r m_r (\beta(r)S_0-(\delta+\mu))}{\sum_{r \in \cM} \pi_r m_r},
 \end{equation}
 and
 \begin{equation}\label{persA}
     \liminf_{t \to \infty} \frac{1}{t} \int_0^t A(u) du \geq \frac{\delta+\mu}{\alpha+\delta+\mu}\frac{\mu+\nu+\gamma}{\widecheck \beta(\widecheck \beta+\gamma)}\frac{\sum_{r \in \cM} \pi_r m_r (\beta(r)S_0-(\delta+\mu))}{\sum_{r \in \cM} \pi_r m_r}.
 \end{equation}
\end{corollary}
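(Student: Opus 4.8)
The plan is to derive \eqref{persI} and \eqref{persA} directly from \eqref{persAI} of Theorem \ref{pers1} by relating the running means of $I$ alone and $A$ alone to the running mean of their sum. The key observation is the third equation of \eqref{sairs3_s}, namely $\dot I(t) = \alpha A(t) - (\delta_I+\mu) I(t)$, which under the standing assumption $\delta_A = \delta_I =: \delta$ becomes $\dot I(t) = \alpha A(t) - (\delta+\mu) I(t)$. Integrating this on $[0,t]$ and dividing by $t$ gives
\begin{equation*}
\frac{I(t)-I(0)}{t} = \alpha \,\frac{1}{t}\int_0^t A(u)\,du - (\delta+\mu)\,\frac{1}{t}\int_0^t I(u)\,du,
\end{equation*}
and since $0 \le I(t) \le 1$ for all $t$ (Theorem \ref{invset}), the left-hand side tends to $0$ a.s. Hence, taking $\liminf$ as $t\to\infty$, the running means of $A$ and $I$ satisfy $\alpha\,\liminf_t \frac1t\int_0^t A(u)\,du = (\delta+\mu)\,\liminf_t \frac1t\int_0^t I(u)\,du$ in the appropriate one-sided sense; more precisely I would argue that $\frac1t\int_0^t I(u)\,du$ and $\frac1t\int_0^t A(u)\,du$ have the same $\liminf$-behaviour up to the factor $(\delta+\mu)/\alpha$.

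Next I would combine this proportionality with the additivity $\frac1t\int_0^t (I(u)+A(u))\,du = \frac1t\int_0^t I(u)\,du + \frac1t\int_0^t A(u)\,du$. Writing $L_I := \liminf_t \frac1t\int_0^t I(u)\,du$ and $L_A := \liminf_t \frac1t\int_0^t A(u)\,du$, and using that from the integrated $\dot I$ equation one gets $\alpha L_A = (\delta+\mu) L_I$ (the limit of $(I(t)-I(0))/t$ being $0$ lets one pass the relation through the $\liminf$), one deduces $L_A = \frac{\delta+\mu}{\alpha} L_I$, so $L_I + L_A \ge L_{I+A}$ forces $L_I\bigl(1 + \frac{\delta+\mu}{\alpha}\bigr) \ge L_{I+A}$, i.e. $L_I \ge \frac{\alpha}{\alpha+\delta+\mu} L_{I+A}$ and symmetrically $L_A \ge \frac{\delta+\mu}{\alpha+\delta+\mu} L_{I+A}$. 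Substituting the lower bound for $L_{I+A}$ from \eqref{persAI} yields exactly \eqref{persI} and \eqref{persA}.

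The one subtle point — and the step I expect to require the most care — is the manipulation of $\liminf$'s: from $\frac{I(t)-I(0)}{t} = \alpha\,\frac1t\int_0^t A - (\delta+\mu)\,\frac1t\int_0^t I$ with the left side $\to 0$, one cannot in general split a $\liminf$ of a difference. The clean way is to note that along any sequence $t_n\to\infty$ realizing $L_{I+A}$, boundedness of all the running averages (they lie in $[0,1]$) lets one pass to a further subsequence along which $\frac1{t_n}\int_0^{t_n} I \to \ell_I$ and $\frac1{t_n}\int_0^{t_n} A \to \ell_A$ with $\ell_I \ge L_I$, $\ell_A \ge L_A$, $\ell_I+\ell_A = L_{I+A}$, and $\alpha \ell_A = (\delta+\mu)\ell_I$ by the vanishing of $(I(t_n)-I(0))/t_n$. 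Solving the two linear relations gives $\ell_I = \frac{\alpha}{\alpha+\delta+\mu} L_{I+A}$ and $\ell_A = \frac{\delta+\mu}{\alpha+\delta+\mu} L_{I+A}$, whence $L_I \le \ell_I$ reversed — rather, one wants the bound $L_I \ge \frac{\alpha}{\alpha+\delta+\mu} L_{I+A}$, which follows because \emph{every} subsequential limit of $\frac1t\int_0^t I$ equals $\frac{\alpha}{\alpha+\delta+\mu}$ times the corresponding limit of $\frac1t\int_0^t(I+A)$, the latter being $\ge L_{I+A}$. I would spell this out carefully, then invoke \eqref{persAI} to conclude, noting that the whole argument takes place on the probability-one event on which \eqref{persAI} and $\frac{I(t)-I(0)}{t}\to 0$ both hold.
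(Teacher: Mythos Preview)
Your approach is essentially the paper's: integrate the third equation of \eqref{sairs3_s} and combine with \eqref{persAI}. However, you have made the $\liminf$ step harder than it needs to be. From $\dot I=\alpha A-(\delta+\mu)I$ one gets, after adding $\alpha\int_0^t I$ to both sides of the integrated identity, the \emph{exact} relation
\[
(\alpha+\delta+\mu)\,\frac{1}{t}\int_0^t I(u)\,du \;=\; \frac{\alpha}{t}\int_0^t\bigl(I(u)+A(u)\bigr)\,du \;-\; \frac{I(t)-I(0)}{t},
\]
and similarly $(\alpha+\delta+\mu)\,\frac{1}{t}\int_0^t A = \frac{\delta+\mu}{t}\int_0^t(I+A) + \frac{I(t)-I(0)}{t}$. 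Since the last term tends to $0$ and $\liminf$ commutes with addition of a sequence tending to $0$ and with multiplication by a positive constant, one obtains \eqref{persI} and \eqref{persA} directly from \eqref{persAI}, with no need for the subsequence extraction you sketch. This is exactly what the paper does (in one line), so your argument is correct but can be shortened substantially.
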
 
\begin{proof}
Integrating the third equation of system \eqref{sairs3_s}
 and dividing both sides by $t$, we have
 $$(\alpha + \delta +\mu) \frac{1}{t} \int_0^t I(u) du \geq \frac{\alpha}{t} \int_0^t (I(u)+A(u)) -\frac{I(t)-I(0)}{t}, $$
 Thus, from \eqref{persAI} it holds \eqref{persI}. 
 
 Now, as before, by integrating the third equation of system \eqref{sairs3_s}
 and dividing both sides by $t$, it is easy to see the \eqref{persA} holds. 
\end{proof}

For the next result, we need to define 

\begin{equation*}
    \hat \beta(r)=\min \{ \beta_A(r), \beta_I(r)\}, \qquad \widecheck{ \hat \beta}= \max_{r \in \cM} \hat\beta(r), \qquad \text{and} \qquad \widecheck \delta=\max \{ \delta_A, \delta_I\}.
\end{equation*}
\begin{theorem}\label{pers2}
 Let us assume $\beta_A(r)\neq \beta_I(r)$ or $\delta_A \neq \delta_I$ in each subsystem $r=1, \ldots, M$. If
 \begin{equation}\label{condpersdiff}
     \sum_{r \in \cM} \pi_r m_r (\hat \beta(r)S_0-(\widecheck \delta+\mu))>0
 \end{equation}
then for any initial value $(x(0),\eta(0),r(0)) \in \mathring \Gamma \times \mathbb R^+ \times \cM$, the following statement is valid with probability 1:
 \begin{equation}\label{persS2}
   \liminf_{t \to \infty} \frac{1}{t} \int_0^t S(u) du \geq \frac{\mu}{\mu+\nu+\widecheck \beta}, 
 \end{equation}
 
 \begin{equation}\label{persAI2}
     \liminf_{t \to \infty} \frac{1}{t} \int_0^t (I(u)+A(u)) du \geq \frac{\mu+\nu+\gamma}{\widecheck{ \hat \beta}(\widecheck \beta+\gamma)}\frac{\sum_{r \in \cM} \pi_r m_r (\hat \beta(r)S_0-(\widecheck \delta+\mu))}{\sum_{r \in \cM} \pi_r m_r}.
 \end{equation}
 \end{theorem}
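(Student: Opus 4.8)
The plan is to mimic, step by step, the proof of Theorem \ref{pers1}, the only change being that in the asymmetric setting the quantity $\beta_A(r)A+\beta_I(r)I$ no longer factors as a single rate times $A+I$; this is absorbed by the elementary two–sided bound $\hat\beta(r)(A+I)\le \beta_A(r)A+\beta_I(r)I\le \widecheck\beta(A+I)$, together with $(\delta_A+\mu)A+(\delta_I+\mu)I\le(\widecheck\delta+\mu)(A+I)$. First I would obtain \eqref{persS2} exactly as \eqref{persS}: since $\beta_A(r),\beta_I(r)\le\widecheck\beta$ and $A+I\le 1$, the first equation of \eqref{sairs3_s} yields $\dot S\ge \mu-(\mu+\nu+\widecheck\beta)S$, and integrating, dividing by $t$, using $\lim_{t\to\infty}(S(t)-S(0))/t=0$ and letting $t\to\infty$ gives $\liminf_{t\to\infty}\frac1t\int_0^t S(u)\,du\ge \mu/(\mu+\nu+\widecheck\beta)$. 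Note that \eqref{limS2} still holds here, since the upper bound $\dot S\le \mu+\gamma-(\mu+\nu+\gamma)S$ does not involve the transmission rates.

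Next, for \eqref{persAI2}, I would sum the second and third equations of \eqref{sairs3_s} and use the bounds above to get $\frac{d}{dt}\ln(I(t)+A(t))\ge \hat\beta(r(t))S(t)-(\widecheck\delta+\mu)$ (the solution starts and stays in $\mathring\Gamma$, so $A+I>0$ and the logarithm is well defined). Integrating and writing $S(u)=S_0-(S_0-S(u))$, I would estimate $\hat\beta(r(u))(S_0-S(u))\le \widecheck{\hat\beta}(S_0-S(u))$, which yields the analogue of \eqref{lnIA}. For the deviation $S_0-S$, the first equation with $S\le 1$ gives $\dot S\ge(\mu+\nu+\gamma)(S_0-S)-(\widecheck\beta+\gamma)(I+A)$, and integrating produces $(\mu+\nu+\gamma)\int_0^t(S_0-S(u))\,du\le S(t)-S(0)+(\widecheck\beta+\gamma)\int_0^t(I(u)+A(u))\,du$, the analogue of \eqref{sintdis}. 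Combining these two inequalities, dividing by $t$, and using $\limsup_{t\to\infty}\ln(I(t)+A(t))/t\le 0$ (because $I+A\le 1$), $(S(t)-S(0))/t\to 0$, and the ergodic relation \eqref{ergsemi}, the terms rearrange into
\[
0\ \ge\ \frac{\sum_{r\in\cM}\pi_r m_r\big(\hat\beta(r)S_0-(\widecheck\delta+\mu)\big)}{\sum_{r\in\cM}\pi_r m_r}\ -\ \frac{\widecheck{\hat\beta}(\widecheck\beta+\gamma)}{\mu+\nu+\gamma}\,\liminf_{t\to\infty}\frac1t\int_0^t (I(u)+A(u))\,du ,
\]
which, together with hypothesis \eqref{condpersdiff}, gives \eqref{persAI2}. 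This also explains the shape of the constant: $\hat\beta$ in the numerator comes from the lower bound in the $\ln(I+A)$ equation, while $\widecheck{\hat\beta}$ and $\widecheck\beta$ in the denominator come from discarding, respectively, the factor $\hat\beta(r(u))$ against $S_0-S(u)$ and the infection term $(\beta_A(r)A+\beta_I(r)I)S$ against $(A+I)$.

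The only point requiring genuine care — and the step I expect to be the main obstacle — is the sign of $S_0-S(u)$ in the estimate $\hat\beta(r(u))(S_0-S(u))\le\widecheck{\hat\beta}(S_0-S(u))$: this inequality is not valid pointwise when $S(u)>S_0$, so one has to run the argument on $[T,t]$ for the time $T$ supplied by \eqref{limS2}, where $S(u)\le S_0+\eps$, and bound $\hat\beta(r(u))(S_0-S(u))\le \widecheck{\hat\beta}(S_0-S(u))+\widecheck{\hat\beta}\,\eps$; the extra $\widecheck{\hat\beta}\,\eps$ contributes $\widecheck{\hat\beta}\,\eps(t-T)$, which disappears after dividing by $t$ and letting $\eps\downarrow 0$. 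Apart from this (and the routine verification that $A(t)+I(t)>0$ for all $t$ when $x(0)\in\mathring\Gamma$, which follows from the interior‑invariance of each subsystem as in Theorem \ref{invset} and the stability statements preceding it), every estimate is a verbatim repetition of the computations in the proof of Theorem \ref{pers1}, with $\beta(r)\rightsquigarrow\hat\beta(r)$, $\delta\rightsquigarrow\widecheck\delta$, and $\widecheck\beta\rightsquigarrow\widecheck{\hat\beta}$ in the places indicated above.
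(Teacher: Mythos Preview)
Your proposal is correct and follows essentially the same route as the paper's proof: bound $\dot S$ below by $\mu-(\mu+\nu+\widecheck\beta)S$ for \eqref{persS2}, bound $\frac{d}{dt}\ln(A+I)\ge \hat\beta(r)S-(\widecheck\delta+\mu)$, split $S=S_0-(S_0-S)$, and control $\int(S_0-S)$ via the integrated $S$--equation exactly as in \eqref{sintdis}. The only noteworthy difference is that you explicitly flag and repair the sign issue in the step $\hat\beta(r(u))(S_0-S(u))\le\widecheck{\hat\beta}(S_0-S(u))$, which the paper performs without comment (both here and already in the proof of Theorem~\ref{pers1}); your $\eps$--fix on $[T,t]$ using \eqref{limS2} is a correct way to close this gap.
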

 
 \begin{proof}
 Assertion \eqref{persS2} can be proved in the same way as assertion \eqref{persS} in Theorem \ref{pers1}, by considering that $-(\beta_A(r)A+\beta_I(r)I)S \geq -\widecheck \beta(A+I) S$.
 Let us prove assertion \eqref{persAI2}. 
  By summing the second and third equations of \eqref{sairs3_s}, we have that

\begin{align*}
\frac{d \ln(I(t)+A(t))}{dt}&= \frac{1}{I+A}\left[(\beta_A(r)A+\beta_I(r)I)S- (\delta_A -\mu) A - (\delta_I-\mu)I\right]\\
& \geq \hat \beta(r) S-(\widecheck \delta+\mu).
\end{align*}
Following similar arguments as in \eqref{lnIA}, we obtain that
 
 \begin{equation}\label{lnAI2}
 \begin{split}
\ln (I(t)+A(t))& \geq \ln(I(0)+A(0)) + \int_0^t \hat \beta(r(u))\frac{\gamma+\mu}{\nu+\gamma+ \mu} du -\widecheck{ \hat \beta} \int_0^t \left(\frac{\gamma+\mu}{\nu+\gamma+ \mu} -S(u)\right) du -\int_0^t (\widecheck\delta+\mu) du.
\end{split}
\end{equation}
 Now, by the same steps as in \eqref{ds2}, we obtain

 \begin{equation}\label{sintdis2}
\begin{split}
(\nu+ \gamma +\mu) \int_0^t \left( \frac{\gamma + \mu}{\nu+ \gamma +\mu}-S(u)\right) du \leq S(t)-S(0) + (\widecheck \beta+\gamma) \int_0^t (I(u)+A(u)) du.
\end{split}    
\end{equation}
 By combining \eqref{lnAI2} and \eqref{sintdis2}, we have
 \begin{equation*}
\begin{split}
  \ln (I(t)+A(t))& \geq \ln(I(0)+A(0)) + \int_0^t \left(\hat \beta(r(u))\frac{\gamma+\mu}{\nu+\gamma+ \mu} -(\widecheck\delta +\mu)\right) du\\
  & -\frac{\widecheck{ \hat \beta}}{\nu+\gamma+ \mu}\left[ S(t)-S(0) +(\widecheck \beta +\gamma) \int_0^t (I(u)+A(u)) du\right].
  \end{split}
\end{equation*}
Finally, with the same arguments as in Theorem \ref{pers1}, we obtain \eqref{persAI2}. 
\end{proof}

\begin{remark}
 Let us fix $r \in \cM$. Let us consider condition \eqref{condpersdiff} and $\cR_0(r)$ in \eqref{R0}. It is easy to see that it holds
 
 \begin{equation*}
    (\hat \beta(r)S_0-(\widecheck \delta+\mu))>0 \Rightarrow \cR_0(r)>1
 \end{equation*}
\end{remark}
 
 \begin{corollary}
 Let us assume $\beta_A(r)\neq \beta_I(r)$ or $\delta_A \neq \delta_I$ in each subsystem $r=1, \ldots, M$. If
 \begin{equation*}
     \sum_{r \in \cM} \pi_r m_r (\hat \beta(r)S_0-(\widecheck \delta+\mu))>0
 \end{equation*}
then for any initial value $(x(0),\eta(0),r(0)) \in \mathring \Gamma \times \mathbb R^+ \times \cM$, the following statements hold with probability 1:
$$
\liminf_{t \to \infty} \frac{1}{t} \int_0^t I(u)du \geq \frac{\alpha}{\alpha+\delta_I+\mu} \frac{\mu+\nu+\gamma}{\widecheck{ \hat \beta}(\widecheck \beta+\gamma)}\frac{\sum_{r \in \cM} \pi_r m_r (\hat \beta(r)S_0-(\widecheck \delta+\mu))}{\sum_{r \in \cM} \pi_r m_r},$$
and
$$
\liminf_{t \to \infty} \frac{1}{t} \int_0^t A(u)du \geq \frac{\delta_I+\mu}{\alpha+\delta_I+\mu} \frac{\mu+\nu+\gamma}{\widecheck{ \hat \beta}(\widecheck \beta+\gamma)}\frac{\sum_{r \in \cM} \pi_r m_r (\hat \beta(r)S_0-(\widecheck \delta+\mu))}{\sum_{r \in \cM} \pi_r m_r}.$$
 \end{corollary}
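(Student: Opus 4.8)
The plan is to mirror the passage from Theorem \ref{pers1} to Corollary \ref{cor:persI}, now extracting the individual lower bounds on $A$ and on $I$ from the bound \eqref{persAI2} on $I+A$ supplied by Theorem \ref{pers2}. The only extra ingredient beyond \eqref{persAI2} is the third equation of \eqref{sairs3_s}, namely $\dot I = \alpha A - (\delta_I+\mu) I$, together with the boundedness of the solution guaranteed by Theorem \ref{invset}, which ensures $\lim_{t\to\infty}(I(t)-I(0))/t = 0$ for every $\omega$. All inequalities below should be read on the full-measure event on which \eqref{persAI2} holds.

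For the bound on $I$, I would rewrite $\alpha A = \alpha(I+A) - \alpha I$ so that the $I$-equation becomes $\dot I = \alpha(I+A) - (\alpha+\delta_I+\mu) I$. Integrating on $[0,t]$ and dividing by $t$ gives
\[
(\alpha+\delta_I+\mu)\,\frac1t\int_0^t I(u)\,du = \frac{\alpha}{t}\int_0^t \big(I(u)+A(u)\big)\,du - \frac{I(t)-I(0)}{t}.
\]
Taking $\liminf_{t\to\infty}$, using that the last term vanishes and that $\liminf$ is superadditive, and then inserting \eqref{persAI2}, yields the asserted lower bound for $\liminf_{t\to\infty}\frac1t\int_0^t I(u)\,du$ after dividing through by $\alpha+\delta_I+\mu$.

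For the bound on $A$, I would instead substitute $I = (I+A) - A$ into the same equation, obtaining $\dot I = (\alpha+\delta_I+\mu) A - (\delta_I+\mu)(I+A)$; integrating, dividing by $t$ and passing to the $\liminf$ exactly as above gives $(\alpha+\delta_I+\mu)\liminf_{t\to\infty}\frac1t\int_0^t A(u)\,du \ge (\delta_I+\mu)\liminf_{t\to\infty}\frac1t\int_0^t (I(u)+A(u))\,du$, and a second application of \eqref{persAI2} closes the estimate. There is no genuine obstacle here: the argument is a linear bookkeeping step layered on Theorem \ref{pers2}. The only points deserving a word of care are that the prefactors involve $\delta_I$ (coming from the $I$-equation) rather than $\widecheck\delta$, and that one works throughout on the probability-one event where \eqref{persAI2} is valid, so that the resulting bounds also hold with probability one.
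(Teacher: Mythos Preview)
Your proof is correct and follows exactly the approach the paper intends: the paper's own proof is the single line ``analogous to Corollary \ref{cor:persI}, by invoking \eqref{persAI2}'', and Corollary \ref{cor:persI} performs precisely the manipulation of the third equation $\dot I=\alpha A-(\delta_I+\mu)I$ that you spell out. A tiny remark: since the integrated identity is in fact an equality and $(I(t)-I(0))/t\to 0$, you do not even need superadditivity of $\liminf$; the $\liminf$ of the sum equals that of the surviving term.
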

The proof is analogous to that of Corollary \ref{cor:persI}, by invoking \eqref{persAI2}.

 \begin{remark}
 Let $\beta_A(r)=\beta_I(r):=\beta(r)$, $\delta_A=\delta_I:=\delta$, for all $r=1, \ldots, M$. From the almost surely persistence in time mean proved in Theorem \ref{pers1}, the following weak persistence follows: if $\cR_0> 1$, then for any initial value $(x(0),\eta(0),r(0)) \in \mathring \Gamma \times \mathbb R^+ \times \cM$, it holds
$$\limsup_{t \to \infty}(I(t)+ A(t)) \geq \frac{\mu+\nu+\gamma}{\widecheck \beta(\widecheck \beta+\gamma)}\frac{\sum_{r \in \cM} \pi_r m_r (\beta(r)S_0-(\delta+\mu))}{\sum_{r \in \cM} \pi_r m_r} \qquad \text{a.s.}$$

 Let $\beta_A(r)\neq \beta_I(r)$ or $\delta_A\neq\delta_I$, for all $r=1, \ldots, M$. From Theorem \ref{pers2} follows: if $ \sum_{r \in \cM} \pi_r m_r (\hat \beta(r)S_0-(\widecheck \delta+\mu))>0$
 then for any initial value $(x(0),\eta(0),r(0)) \in \mathring \Gamma \times \mathbb R^+ \times \cM$, it holds
$$\limsup_{t \to \infty}(I(t)+ A(t)) \geq  \frac{\mu+\nu+\gamma}{\widecheck{ \hat \beta}(\widecheck \beta+\gamma)}\frac{\sum_{r \in \cM} \pi_r m_r (\hat \beta(r)S_0-(\widecheck \delta+\mu))}{\sum_{r \in \cM} \pi_r m_r}, \qquad \text{a.s}.$$
 \end{remark}

 In the case $\beta_A(r)=\beta_I(r):=\beta(r)$, $\delta_A=\delta_I:=\delta$, the position of the value $\cR_0$ \eqref{r0semi} with respect to one determines the extinction or the persistence of the disease, that is $\cR_0$ is a threshold value. Thus, from Theorems \eqref{ext_switch1} and \eqref{pers1}, we obtain the following corollary:
 \begin{corollary}\label{corfraction}
 Let us assume $\beta_A(r)=\beta_I(r):=\beta(r)$, $\delta_A=\delta_I:=\delta$, and consider $\cR_0$ in \eqref{r0semi}. Then,the solution of system \eqref{sairs3_s} has the property that
 \begin{itemize}
     \item[(i)] If $\cR_0<1$,  for any initial value
 $(x(0),\eta(0),r(0)) \in \Gamma \times \mathbb R^+ \times \cM$,  the fraction of asymptomatic and infected individuals $A(t)$ and $I(t)$, respectively,
     tends to zero exponentially almost surely, that is the disease dies out with probability one;
     \item[(ii)] If $\cR_0>1$,  for any initial value
 $(x(0),\eta(0),r(0)) \in \mathring \Gamma \times \mathbb R^+ \times \cM$,  the disease will be almost surely persistent in time mean.
 \end{itemize}
 \end{corollary}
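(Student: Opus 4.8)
The plan is to obtain both parts as immediate consequences of the threshold characterisation of $\cR_0$ given in Proposition \ref{prop:ROsemi}, combined with the extinction result of Theorem \ref{ext_switch1} and the persistence result of Theorem \ref{pers1}; no genuinely new argument is needed, only a matching of hypotheses.

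For part (i), assume $\cR_0<1$. By Proposition \ref{prop:ROsemi}(i) this is equivalent to $\sum_{r\in\cM}\pi_r m_r(\beta(r)S_0-(\delta+\mu))<0$, and since $S_0=\frac{\gamma+\mu}{\nu+\gamma+\mu}$ this is exactly the hypothesis of Theorem \ref{ext_switch1}. Hence the conclusions \eqref{cond_1}, \eqref{cond_2}, \eqref{cond_3} hold, so $A(t)\to 0$ and $I(t)\to 0$ almost surely. To record the rate I would go back to the intermediate estimate established inside the proof of Theorem \ref{ext_switch1}, namely that for every $\eps>0$
\[
\limsup_{t\to\infty}\frac{\ln(I(t)+A(t))}{t}\le\frac{\sum_{r\in\cM}\pi_r m_r(\beta(r)(S_0+\eps)-(\delta+\mu))}{\sum_{r\in\cM}\pi_r m_r}\qquad\text{a.s.}
\]
Because the right-hand side is negative at $\eps=0$ and depends continuously on $\eps$, one can fix $\eps$ small enough that it remains negative; this yields a strictly negative almost sure upper bound for $\limsup_{t\to\infty} t^{-1}\ln(I(t)+A(t))$, so $I(t)+A(t)$, and a fortiori each of $A(t)$ and $I(t)$, tends to $0$ exponentially fast almost surely, which is the assertion in (i).

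For part (ii), assume $\cR_0>1$. By Proposition \ref{prop:ROsemi}(ii) this is equivalent to $\sum_{r\in\cM}\pi_r m_r(\beta(r)S_0-(\delta+\mu))>0$, which is precisely the standing hypothesis $\cR_0>1$ of Theorem \ref{pers1}. Applying that theorem to any initial value in $\mathring\Gamma\times\bR^+\times\cM$ yields \eqref{persS} and \eqref{persAI}. The only remaining point is to check that the lower bound in \eqref{persAI} is strictly positive: the prefactor $\frac{\mu+\nu+\gamma}{\widecheck\beta(\widecheck\beta+\gamma)}$ is positive, and $\sum_{r\in\cM}\pi_r m_r(\beta(r)S_0-(\delta+\mu))>0$ by the above, so the product is $>0$; together with \eqref{persS} this is exactly the definition of almost sure persistence in the time mean. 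I do not expect any real obstacle here, since the corollary is essentially a bookkeeping consequence of results already proved; the only steps needing a line of care are the substitution of the value of $S_0$ that identifies the hypothesis of Theorem \ref{ext_switch1} with $\cR_0<1$, and the extraction of the word ``exponentially'' from the logarithmic estimate in the proof of Theorem \ref{ext_switch1} rather than from its stated conclusion.
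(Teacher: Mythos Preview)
Your proposal is correct and matches the paper's own treatment: the corollary is stated as an immediate consequence of Theorems \ref{ext_switch1} and \ref{pers1} via Proposition \ref{prop:ROsemi}, with no separate proof given. Your extra sentence extracting the exponential rate from the logarithmic estimate in the proof of Theorem \ref{ext_switch1} is exactly what the paper alludes to in the remark just after that theorem.
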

 
 \begin{remark}\label{persimplr0}
 Let us assume $\beta_A(r)\neq \beta_I(r)$ or $\delta_A\neq\delta_I$. Let us fix $r \in \cM$. Let us consider condition \eqref{condpersdiff} and $\cR_0(r)$ in \eqref{R0}. It is easy to see that it holds
 
 \begin{equation*}
    (\hat \beta(r)S_0-(\widecheck \delta+\mu))>0 \Rightarrow \cR_0(r)>1
 \end{equation*}
\end{remark}
 
 In the case $\beta_A(r)\neq \beta_I(r)$ or $\delta_A\neq\delta_I$, we find two regions, 
 one where the system goes to extinction almost surely, and the other where it is stochastic persistent in time mean. These two regions are not adjacent, as there is a gap between them; thus we do not have a threshold value separating these regions.\\

 In the following Section \ref{sec:limit_set}, we investigate the omega-limit set of the system. The introduction of the backward recurrence time process allows the considered stochastic system to be a piecewise deterministic Markov process \cite{li2019threshold}.
 Thus, in Section \eqref{sec:ipm}, we prove the existence of a unique invariant probability measure for this process. 
 Let us note that in the two subsequent sections, to obtain our results, we follow mainly the approaches in \cite{li2019threshold} and, like them, for simplicity, we restrict the analysis to a semi-Markov process with state space $\cM=\{1,2\}$. Hence, the external environmental conditions can switch randomly between two states, for example favorable and adverse weather conditions for the disease spread, or lockdown and less stringent distance measures, considering that the disease transmission rate is also a function of the contact rate. \\

\section{Omega-limit set}\label{sec:limit_set}

Let us assume in this section and in the subsequent one that $\cM=\{1,2\}$. \\
Let us define the omega-limit set of the trajectory starting from an initial value $x(0) \in \Gamma$ as
\begin{equation}\label{eq:omega}
    \tilde \Omega(x(0),\omega)= \bigcap_{T>0}\overline{\bigcup_{t >T} x(t,\omega,x(0))}
\end{equation}

We use the notation $\tilde \Omega$ for the limit set \eqref{eq:omega} in place of the usual one $\omega$ in the deterministic dynamical systems for avoiding conflict with the element $\omega$ in the probability sample space.
In this section, it will be shown that under some appropriate condition $\tilde \Omega(x(0),\omega)$ is deterministic, i.e., it is constant almost surely and it is independent of the initial value $x(0)$.
Let us consider the following assumption:\\

\textbf{(H2)} For some $r \in \cM$, there exists a unique and globally asymptotically stable endemic equilibrium $x^*_r=(S^*_r,A^*_r,I^*_r)$ for the corresponding system of \eqref{sairs3_s} in the state $r$.\\

Let us note that when $\beta_A(r)=\beta_I(r):=\beta(r)$ and $\delta_A=\delta_I:=\delta$, in the case of persistence in time mean, by (ii) of Proposition \ref{prop:ROsemi} the condition $\cR_0>1$ implies that there exists at least one state $r$ such that $ \pi_r m_r (\beta(r) S_0-(\delta+\mu)) >0$, i.e. $\cR_0(r)>1$. 
By Theorem \ref{thm:globeq}, $x_r^*$ is globally asymptotically stable in $\mathring \Gamma$. 
Thus, if $\cR_0>1$ condition \textbf{(H2)} is satisfied.\\
When $\beta_A(r)\neq \beta_I(r)$ or $\delta_A\neq\delta_I$, by Theorem \ref{pers2}, if equation \eqref{condpersdiff} holds, then there exists at least one state $r$ such that  $(\hat \beta(r)S_0-(\widecheck \delta+\mu))>0$; from Remark
\ref{persimplr0} this implies that $\cR_0(r)>1$.
By Theorem \eqref{thm:globdiff}, we have the global asymptotic stability of $x_r^*$ if $\cR_0(r)>1$  under the additional condition $\beta_A(r) < \delta_I$. However, it is easy to see that if this last condition is verified, $(\hat \beta(r)S_0-(\widecheck \delta+\mu))>0$ cannot be valid. Thus, if we need \textbf{(H2)}, we can suppose it holds for a state for which $(\hat \beta(r)S_0-(\widecheck \delta+\mu))>0$ is not verified. Indeed, we remember that this last condition is only sufficient to have $\cR_0(r)>1$ but not necessary.  \\

Now, let us recall some concepts on the Lie algebra of vector fields \cite{benaim2015qualitative,jurdjevic1997geometric} that we need for the next results.
Let $w(y)$ and $z(y)$ be two vector fields on $\mathbb R^3$. The Lie bracket $[w,z]$ is also a vector field given by
$$[w,z]_j(y)=\sum_{k=1}^3 \left( w_k \frac{\partial z_j}{\partial y_k}(y)-z_k\frac{\partial w_k}{\partial y_k}(y) \right), \qquad j=1,2,3$$
Assumption \textbf{(H3)}:\\
A point $x=(S,A,I) \in \mathbb{R}^3_+$ is said to satisfy the \emph{Lie bracket condition}, if vectors $u_1(x)$, $u_2(x)$,
$[u_i,u_j](x)_{i,j \in \cM}$, $[u_i,[u_j,u_k]](x)_{i,j,k \in \cM}, \ldots$, span the space $\mathbb R^3$, where for each $r \in \cM$,

\begin{equation}\label{lie}
    u_r(x) = \left(
\begin{matrix}
      \mu -\bigg(\beta_A(r)A + \beta_I(r)I\bigg)S -(\mu + \nu+ \gamma) S + \gamma (1-A-I)\\
\bigg(\beta_A(r)A + \beta_I(r)I(t)\bigg)S -(\alpha + \delta_A +\mu) A\\
   \alpha A -(\delta_I +\mu)I
   \end{matrix}
    \right).
\end{equation}

Without loss of generality, we can assume that condition \textbf{(H2)} holds for $r=1$.
\begin{theorem}\label{limitset}
Suppose that system \eqref{sairs3_s} is persistent in time mean and the hypothesis \textbf{(H2)} holds. Let us denote by
$\xi^r_t(x(0))$ the solution of system \eqref{sairs3_s} in the state $r$ with initial value \ste{$x(0) \in \mathring \Gamma$}, and let
$$\Psi= \bigg\{ (S,A,I)=\xi^{e_k}_{t_k} \circ \ldots \circ \xi^{e_1}_{t_1}(x_1^*) : t_1, \ldots, t_k \geq 0 \quad \text{and} \quad e_1, \ldots, e_k \in \cM, k \in \mathbb N  \bigg\}.$$
Then, the following statements are valid:
\begin{itemize}
    \item[(a)] The closure $\bar \Psi$ is a subset of the omega-limit set $\tilde \Omega(x(0),\omega)$ with probability one.
    \item[(b)] If there exists a point $x_*:=(S_*,A_*,I_*) \in \Psi$ satisfying the condition \textbf{(H3)}, then $\Psi$ absorbs all positive solutions, that is for any initial value $x(0) \in \mathring \Gamma$, the value 
    $$\hat T(\omega)=\inf\left\{ t >0: x(s,\omega,x(0)) \in \Psi , \forall s > t \right\}$$
    is finite outside a $\mathbb P$-null set. Consequently, $\bar \Psi$ is the omega-limit set $\tilde \Omega(x(0),\omega)$ for any $x(0) \in \mathring \Gamma$ with probability one. 
\end{itemize}
\end{theorem}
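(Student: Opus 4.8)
The plan is to adapt the piecewise-deterministic-Markov-process arguments of \cite{li2019threshold}, using the strong Markov property of $\{(x(t),\eta(t),r(t))\}$, the positivity of the holding-time densities from \textbf{(H1)}, and the control-theoretic content of Assumption \textbf{(H3)}. The first step towards (a) is to show $x_1^*\in\tilde\Omega(x(0),\omega)$ almost surely. Since the embedded chain $\{X_n\}$ is irreducible by \textbf{(H1)}(i) it visits state $1$ infinitely often, and by \textbf{(H1)}(ii) we have $\bP(\sigma_{n+1}>T\mid X_n=1)=1-F_1(T)>0$ for every $T>0$; a conditional Borel--Cantelli argument (using the strong Markov property of $\{(\eta(t),r(t))\}$) then gives, almost surely, infinitely many sojourns in state $1$ of length exceeding any prescribed $T$. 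Meanwhile, persistence in time mean, together with the sign of $\dot S$ and $\dot R$ (which force $\liminf_t S(t)>0$ and, since $\nu>0$, $\liminf_t R(t)>0$), guarantees that the solution returns infinitely often to a fixed compact set $K\subset\mathring\Gamma$; on $K$ the global asymptotic stability of $x_1^*$ from \textbf{(H2)} is uniform, so for every $\eps>0$ there is $T_\eps$ with $\|\xi^1_{T_\eps}(y)-x_1^*\|<\eps$ for all $y\in K$. Matching a long state-$1$ sojourn that starts from a visit to $K$ places the solution within $\eps$ of $x_1^*$; as $\eps$ is arbitrary, $x_1^*\in\tilde\Omega(x(0),\omega)$ almost surely.

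I would then show that for a general point $y=\xi^{e_k}_{t_k}\circ\cdots\circ\xi^{e_1}_{t_1}(x_1^*)\in\Psi$, whenever the solution is near $x_1^*$ — which by the previous step happens infinitely often — there is a probability bounded away from $0$ that the next $k$ states of the embedded chain are $e_1,\dots,e_k$ (irreducibility) with holding times in prescribed open neighbourhoods of $t_1,\dots,t_k$ (positivity and continuity of the $f_i$); by continuous dependence of the flows on the initial point and on the holding times, the switching trajectory is then carried into any fixed neighbourhood of $y$. A second conditional Borel--Cantelli argument yields $y\in\tilde\Omega(x(0),\omega)$ almost surely. Applying this to a countable dense subset $D\subset\Psi$, intersecting the countably many full-probability events, and using that $\tilde\Omega(x(0),\omega)$ is closed gives $\bar\Psi=\overline D\subseteq\tilde\Omega(x(0),\omega)$ with probability one, which is assertion (a).

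For (b), I would first observe that $\Psi$ is forward invariant under each flow $\xi^r_t$ (appending one flow to a representation of a point of $\Psi$ yields again a point of $\Psi$), hence under the full switching dynamics; therefore, once the trajectory enters $\Psi$ it stays there, so $\hat T(\omega)$ equals the first entrance time of $x(t)$ into $\Psi$ and it suffices to prove this time is almost surely finite. Now Assumption \textbf{(H3)} at $x_*\in\Psi$ is exactly the Lie-algebra rank (bracket-generating) condition, so by Krener's accessibility theorem \cite{jurdjevic1997geometric,benaim2015qualitative} the reachable set $\{\xi^{e_m}_{s_m}\circ\cdots\circ\xi^{e_1}_{s_1}(x_*):s_j\ge0,\ e_j\in\cM\}$ has nonempty interior; being contained in $\Psi$ (since $x_*\in\Psi$ and $\Psi$ is forward invariant), this shows $\Psi$ contains a nonempty open set $U$. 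Choose $\hat x\in U$ and a control word $(e_1,s_1),\dots,(e_m,s_m)$ with $\xi^{e_m}_{s_m}\circ\cdots\circ\xi^{e_1}_{s_1}(x_1^*)=\hat x$; by continuity there are a neighbourhood $V$ of $x_1^*$ and open intervals $J_j\ni s_j$ such that running the system through states $e_1,\dots,e_m$ with holding times in $J_1,\dots,J_m$ maps $V$ into $U$. By the first step of (a), the solution visits $V$ infinitely often almost surely; at each visit, by the strong Markov property, the next $m$ sojourns realise the word $(e_1,J_1),\dots,(e_m,J_m)$ with probability at least some fixed $p_0>0$, which drives $x(t)$ into $U\subseteq\Psi$. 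A conditional Borel--Cantelli argument over these infinitely many trials shows that $x(t)$ enters $\Psi$ in finite time almost surely, i.e. $\hat T(\omega)<\infty$ a.s. Hence for $t>\hat T(\omega)$ the trajectory lies in $\Psi$, so $\tilde\Omega(x(0),\omega)\subseteq\bar\Psi$; combined with (a) this gives $\tilde\Omega(x(0),\omega)=\bar\Psi$ almost surely, independent of $x(0)$.

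I expect the main obstacle to be twofold: making rigorous that persistence in time mean forces infinitely many returns of the solution to a fixed compact subset of $\mathring\Gamma$ (which is what makes the convergence to $x_1^*$ uniform and hence places $x_1^*$, and then all of $\bar\Psi$, in the omega-limit set), and correctly invoking the accessibility theorem to pass from the bracket-generating condition \textbf{(H3)} to the nonemptiness of the interior of the reachable set, and so of $\Psi$. The remaining ingredients — the uniform positive lower bounds on the relevant conditional probabilities, and the passage from ``positive probability infinitely often'' to ``almost surely eventually'' via the strong Markov property of $\{(x(t),\eta(t),r(t))\}$ — are routine but need care precisely because the holding-time distributions are not exponential.
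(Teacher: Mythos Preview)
Your proposal is correct and follows essentially the same approach as the paper, which simply states that the proof follows by arguments similar to those of \cite[Thm~9]{li2019threshold} and omits all details. Your sketch in fact supplies considerably more of the argument than the paper's own treatment does.
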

The proof of the Theorem \ref{limitset} follows by similar arguments to that of \cite[Thm 9]{li2019threshold}, thus we omit it.

\section{Invariant probability measure}\label{sec:ipm}

In this section, we will prove the existence of an invariant probability measure for the homogeneous Markov process $\{(x(t), \eta(t), r(t)), t \geq 0\}$ on the state space $$\cH= \mathring\Gamma \times \mathbb R_+ \times \cM.$$
Following \cite{li2019threshold}, we introduce a metric $\hbar(\cdot, \cdot)$ on the state space $\cH$:

\begin{equation}\label{hmetric}
\hbar \left((x_1,s_1,i), (x_2,s_2,j)\right)= \sqrt{|x_1-x_2|^2+|s_1-s_2|^2}+o(i,j),
\end{equation}
where 
$$o(i,j)=\begin{cases}
0, & \text{if} \quad i=j,\\
1, & \text{if} \quad i \neq j.
\end{cases}$$
Hence, $(X,\hbar(\cdot, \cdot),\cB(\cH))$ is a complete separable metric space, where $\cB(\cH)$ is the Borel $\sigma$-algebra on $\cH$. 

To ensure the existence of an invariant probability measure on $\cH$, we use the following exclusion principle.

\begin{lemma}[see \cite{stettner1986existence}]\label{existmu}\label{exclusion}
Let $\Phi=\left\{ \Phi_t, t \geq 0 \right\}$ be a Feller process with state space $(X,\cB(X))$.  Then either

\begin{itemize}
\item[a)] there exists an invariant probability measure on $X$, or
\item[b)] for any compact set $C \subset X$,
$$\lim_{t \to \infty}\sup_{\kappa} \frac{1}{t} \int_0^t \left( \int_X \bP(u,x,C) \kappa( {\mathrm d}x)\right){\mathrm d} u=0,  $$
where the supremum is taken over all initial distributions $\kappa$ on the state space $X$, $x \in X$ is the initial condition for the process $\Phi_t$, and $\bP(t,x,C)=\bP_x(\Phi_t \in C)$ is the transition probability function.
\end{itemize}
\end{lemma}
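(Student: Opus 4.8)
This is a Krylov--Bogolyubov-type dichotomy, used here as a black box from \cite{stettner1986existence}; we only indicate the plan of its proof. The plan is to prove the contrapositive of ``not (a) $\Rightarrow$ (b)'', i.e.\ to show that if the alternative (b) fails then an invariant probability measure exists. For an initial law $\kappa$ on $X$ and $t>0$, set
$$R_t^\kappa(\cdot)=\frac{1}{t}\int_0^t\left(\int_X \bP(u,x,\cdot)\,\kappa(dx)\right)du,$$
a probability measure on $X$; since $\sup_\kappa R_t^\kappa(C)=\sup_{x\in X}\frac{1}{t}\int_0^t\bP(u,x,C)\,du$, the failure of (b) supplies a compact set $C\subseteq X$, a constant $\eps>0$, times $t_n\to\infty$ and laws $\kappa_n$ with $R_{t_n}^{\kappa_n}(C)\ge\eps$ for all $n$.

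First I would invoke that $X$ (which is $\cH$ in our application) is locally compact, separable and metrizable, so that the sub-probability measures on $X$ form a sequentially compact set for the vague topology; along a subsequence, $R_{t_{n_k}}^{\kappa_{n_k}}\to\nu$ vaguely, with $\nu$ a sub-probability measure. Because $C$ is compact, vague convergence gives $\nu(C)\ge\limsup_k R_{t_{n_k}}^{\kappa_{n_k}}(C)\ge\eps>0$, so $\nu\neq 0$, and we may set $\bar\nu=\nu/\nu(X)$. Then I would check that $\bar\nu$ is invariant. Fix $s>0$ and let $P_s^*$ denote the action of the transition semigroup on measures; shifting the interval of integration gives the almost-invariance estimate $\|P_s^*R_t^\kappa-R_t^\kappa\|_{TV}\le 2s/t$, hence $P_s^*R_{t_{n_k}}^{\kappa_{n_k}}-R_{t_{n_k}}^{\kappa_{n_k}}\to 0$ in total variation. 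By the Feller property (in the form $P_sf\in C_0(X)$ whenever $f\in C_0(X)$), vague convergence applied to $P_sf$ and to $f$, together with the last estimate, yields
$$\int_X P_sf\,d\nu=\lim_k\int_X P_sf\,dR_{t_{n_k}}^{\kappa_{n_k}}=\lim_k\int_X f\,d\bigl(P_s^*R_{t_{n_k}}^{\kappa_{n_k}}\bigr)=\lim_k\int_X f\,dR_{t_{n_k}}^{\kappa_{n_k}}=\int_X f\,d\nu$$
for every $f\in C_0(X)$. Therefore $P_s^*\nu=\nu$, and since $\bP(s,x,X)=1$ for all $x$ the total mass is preserved, so $P_s^*\bar\nu=\bar\nu$ for every $s>0$: $\bar\nu$ is an invariant probability measure, contradicting the negation of (a). This proves the dichotomy.

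The main obstacle is that the Ces\`aro averages $\{R_t^\kappa\}$ need not be tight, so mass may escape to infinity and one must work in the vague rather than the weak topology. The scheme then rests on two points: extracting the subsequence along one compact set $C$ on which a fixed amount of mass is retained, which keeps the vague limit $\nu$ nonzero; and the Feller semigroup mapping $C_0(X)$ into itself, which is exactly what lets the almost-invariance of the averages pass to the vague limit (for a process that is only weakly Feller an additional truncation argument is needed at this step). In the sequel we simply apply the lemma as stated, after checking separately that $\{(x(t),\eta(t),r(t)),\,t\ge 0\}$ is a Feller process on $\cH$.
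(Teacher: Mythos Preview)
The paper does not prove this lemma at all: it is quoted verbatim from \cite{stettner1986existence} and used as a black box, exactly as you anticipated in your opening sentence. So there is no ``paper's own proof'' to compare against; your sketch of the Krylov--Bogolyubov dichotomy is additional content beyond what the paper provides, and it is a correct outline of the standard argument (Ces\`aro averages, vague subsequential limit kept nonzero by the compact set $C$, invariance passed to the limit via the Feller property and the $O(s/t)$ almost-invariance estimate).

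One remark on your sketch: you pass to the vague limit by testing against $f\in C_0(X)$ and use that $P_sf\in C_0(X)$. This is the $C_0$-Feller (or ``Feller--Dynkin'') property, which is strictly stronger than the weak Feller property ($P_s:C_b\to C_b$) established in Lemma~\ref{feller} for the process $\{(x(t),\eta(t),r(t))\}$ on $\cH$. You correctly flag this at the end, but note that Stettner's original argument is phrased for weak Feller processes on Polish spaces and handles the possible escape of mass by a different device (essentially working with the one-point compactification and then showing the limit measure places no mass at infinity). Since in the application $\cH$ is locally compact and the dynamics are bounded in the $x$-coordinate, either route works here; but if you want your sketch to match the hypothesis actually verified in the paper, the truncation/compactification step you allude to is not optional.
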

To prove the Feller property of the process $\{(x(t), \eta(t), r(t)), t \geq 0\}$, we need the following lemma. 

\begin{lemma}\label{lemma:hmetric}
Let $\hbar(\cdot, \cdot)$ be the metric defined in \eqref{hmetric}. Then, for any $T>0$ and $\eps >0$, we have

\begin{equation}\label{Phmetric}
\bP \left\{ \max_{0 \leq t \leq T} \hbar \left( (x(t,x_1), \eta(t), r(t)), (x(t,x_2), \eta(t), r(t))\right) \geq \eps \right\} \to 0    
\end{equation}
as $|x_1-x_2| \to 0$, where $(x_1,\eta(0),r(0)),$ $(x_2,\eta(0),r(0)) \in \cH$ denote any two given initial values of the process  $\{(x(t), \eta(t), r(t)), t \geq 0\}$. 
\end{lemma}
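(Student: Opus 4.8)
The plan is to exploit the fact that for a fixed sample path $\omega$, the switching process $\{(\eta(t),r(t)),t\ge0\}$ is the \emph{same} for both initial conditions $x_1$ and $x_2$ (we couple the two solutions by feeding them identical environmental noise). Consequently, on any time interval $[\tau_n,\tau_{n+1})$ between consecutive jumps, both $x(t,x_1)$ and $x(t,x_2)$ evolve according to the \emph{same} deterministic vector field $g(\cdot,r(\tau_n))$. Since the right-hand side $g$ of \eqref{sairs3_s} is a polynomial in $x$ (quadratic in fact) and hence locally Lipschitz, and since by Theorem \ref{invset} both trajectories remain in the compact set $\Gamma$ for all $t\ge0$, there is a global Lipschitz constant $L:=\max_{r\in\cM}\mathrm{Lip}_\Gamma(g(\cdot,r))<\infty$, uniform in the state $r$. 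Therefore, as long as no jump occurs, Grönwall's inequality gives $|x(t,x_1)-x(t,x_2)|\le |x_1-x_2|\,e^{L t}$, and because the vector field does not change \emph{the value} of $x$ at a jump time (only the field governing its future evolution switches), this bound propagates across jumps without deterioration: for \emph{every} $\omega$ and every $t\ge0$,
\begin{equation*}
|x(t,\omega,x_1)-x(t,\omega,x_2)|\le |x_1-x_2|\,e^{L t}.
\end{equation*}

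Next I would translate this pathwise bound into the statement about the metric $\hbar$. By the definition \eqref{hmetric}, since both processes share the same $\eta(t)$ and the same $r(t)$ under the coupling, we have $o(r(t),r(t))=0$ and $|\eta(t)-\eta(t)|=0$, so
\begin{equation*}
\hbar\big((x(t,x_1),\eta(t),r(t)),(x(t,x_2),\eta(t),r(t))\big)=|x(t,\omega,x_1)-x(t,\omega,x_2)|\le |x_1-x_2|\,e^{Lt}\le |x_1-x_2|\,e^{LT}
\end{equation*}
for all $t\in[0,T]$. Taking the maximum over $t\in[0,T]$ and then fixing $\eps>0$: whenever $|x_1-x_2|<\eps e^{-LT}$, the event in \eqref{Phmetric} is empty, so its probability is $0$. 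Letting $|x_1-x_2|\to0$ gives the claim; in fact the convergence is not merely in probability but deterministic and with an explicit rate.

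The only genuinely delicate point — and the one I would be careful to spell out — is the justification that the Grönwall bound is \emph{not} destroyed by the switching: one must observe that at each jump time $\tau_{n+1}$ the state variable $x$ is continuous (the switching affects only the coefficients $\bar\beta(r(t))$, not $x$ itself, as emphasized in the description of \eqref{sairs_s}), so $|x(\tau_{n+1},x_1)-x(\tau_{n+1},x_2)|$ is exactly the left-limit, and one restarts Grönwall on $[\tau_{n+1},\tau_{n+2})$ from this value with the \emph{same} Lipschitz constant $L$ (valid because $L$ was chosen uniform over $r\in\cM$). Since there are at most finitely many jumps in $[0,T]$ almost surely — indeed the result holds for every $\omega$ for which $r(\cdot)$ has finitely many jumps on $[0,T]$, which is all $\omega$ under \textbf{(H1)} — concatenating the finitely many estimates yields the global-in-$[0,T]$ bound $e^{LT}$ with no accumulation of constants. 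Everything else (compactness of $\Gamma$, polynomial hence locally Lipschitz $g$, existence and uniqueness of solutions) is already provided by Theorem \ref{invset}.
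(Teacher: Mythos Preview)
Your proof is correct and, in fact, more direct than the paper's own argument. Both proofs ultimately rest on the same one-sided Lipschitz estimate
\[
\langle x(s,x_1)-x(s,x_2),\, g(x(s,x_1),r(s))-g(x(s,x_2),r(s))\rangle \le K\,|x(s,x_1)-x(s,x_2)|^2,
\]
which the paper derives explicitly through a long coordinate-by-coordinate computation (their inequalities \eqref{lrleq}--\eqref{maxT}). However, from this point the two proofs diverge: the paper applies the It\^o formula to $|x(t,x_1)-x(t,x_2)|^2$, passes to expectations, integrates in time, and then invokes Markov's inequality to control the probability in \eqref{Phmetric}. You instead apply Gr\"onwall's inequality \emph{pathwise}, obtaining the deterministic bound $|x(t,\omega,x_1)-x(t,\omega,x_2)|\le |x_1-x_2|\,e^{LT}$ for every $\omega$ and every $t\in[0,T]$.

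Your route is the natural one here: since system \eqref{sairs3_s} contains no diffusion term, the It\^o formula and the passage through expectations are unnecessary overhead---the paper's presentation appears to follow the template of \cite{li2019threshold}, which may have been designed with genuinely stochastic dynamics in mind. Your argument yields a strictly stronger conclusion (the event in \eqref{Phmetric} is actually empty once $|x_1-x_2|<\eps e^{-LT}$, not merely of small probability), and it makes transparent exactly which structural features are being used: compact invariance of $\Gamma$ (Theorem \ref{invset}), smoothness of each $g(\cdot,r)$, finiteness of $\cM$, and continuity of $x$ across switching times. One minor remark: your concatenation across jump times is fine, but you could also note that since the Lipschitz constant $L$ is uniform in $r$, the differential inequality $\frac{d}{dt}|x(t,x_1)-x(t,x_2)|^2\le 2L|x(t,x_1)-x(t,x_2)|^2$ holds for \emph{all} $t$ without any reference to the jump structure, so the finiteness-of-jumps observation, while true under \textbf{(H1)}, is not strictly needed.
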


\begin{proof}
By considering \eqref{lie},  it is easy to see that 

\begin{equation}\label{dx1x2}
d\left( x(t,x_1)-x(t,x_2)\right)= \left( u_{r(t)}(x(t,x_1)-u_{r(t)}(x(t,x_2))\right) dt.   
\end{equation}
Applying the It\^o formula to the function $|x(t,x_1)-x(t,x_2)|^2$, we have

\begin{equation}\label{itoE}
\bE |x(t,x_1)-x(t,x_2)|^2 = |x_1-x_2|^2+2 \bE\left[ \int_0^t\langle x(s,x_1)-x(s,x_2), u_{r(s)}(x(s,x_1))-u_{r(s)}(x(s,x_2)  \rangle\; ds\right]   
\end{equation}
For ease of notation, let us define

$$G(s,x_k)=(\beta_A(r(s))A(s,x_k)+\beta_I(r(s))I(s,x_k)), \qquad k=1,2.$$
Now, we have that
\begin{align*}
&\langle x(s,x_1)-x(s,x_2), u_{r(s)}(x(s,x_1))-u_{r(s)}(x(s,x_2)  \rangle\\
&=(S(s,x_1)-S(s,x_2))(G(s,x_1)S(s,x_1)-G(s,x_2)S(s,x_2))- (\mu+\nu+\gamma)(S(s,x_1)-S(s,x_2))^2 \\
&-\gamma (S(s,x_1)-S(s,x_2))(A(s,x_1)-A(s,x_2))-\gamma(S(s,x_1)-S(s,x_2))(I(s,x_1)-I(s,x_2))\\
&+(A(s,x_1)-A(s,x_2))(G(s,x_1)S(s,x_1)-G(s,x_2)S(s,x_2))-(\alpha+\delta_A+\mu)(A(s,x_1)-A(s,x_2))^2\\
&+ \alpha (A(s,x_1)-A(s,x_2))(I(s,x_1)-I(s,x_2))-(\delta_I+\mu)(I(s,x_1)-I(s,x_2))^2. 
\end{align*}
Since
$$\gamma(S(s,x_1)-S(s,x_2))(A(s,x_1)-A(s,x_2)) \leq \frac{\gamma}{2}(S(s,x_1)-S(s,x_2))^2 + \frac{\gamma}{2}(A(s,x_1)-A(s,x_2))^2,$$
$$\gamma(S(s,x_1)-S(s,x_2))(I(s,x_1)-I(s,x_2)) \leq \frac{\gamma}{2}(S(s,x_1)-S(s,x_2))^2 + \frac{\gamma}{2}(I(s,x_1)-I(s,x_2))^2,$$
and
$$ \alpha (A(s,x_1)-A(s,x_2))(I(s,x_1)-I(s,x_2)) \leq \frac{\alpha}{2}(A(s,x_1)-A(s,x_2))^2+\frac{\alpha}{2}(I(s,x_1)-I(s,x_2))^2,$$
we can write
\begin{align}\label{lrleq}
\begin{split}
&\langle x(s,x_1)-x(s,x_2), u_{r(s)}(x(s,x_1))-u_{r(s)}(x(s,x_2)  \rangle\\
&\leq (\mu+\nu+2\gamma)(S(s,x_1)-S(s,x_2))^2+\left(\frac{\gamma}{2}+\frac{3\alpha}{2}+\delta_A+\mu\right)(A(s,x_1)-A(s,x_2))^2 \\
&+\left(\delta_I+\mu +\frac{\gamma}{2}+\frac{\alpha}{2}\right)(I(s,x_1)-I(s,x_2))^2\\
&+|(S(s,x_1)-S(s,x_2))(G(s,x_1)S(s,x_1)-G(s,x_2)S(s,x_2))|\\
&+|(A(s,x_1)-A(s,x_2))(G(s,x_1)S(s,x_1)-G(s,x_2)S(s,x_2))|
\end{split}
\end{align}
Now, 
\begin{align}\label{SGleq}
\begin{split}
&|(S(s,x_1)-S(s,x_2))(G(s,x_1)S(s,x_1)-G(s,x_2)S(s,x_2))|\\
&\leq \widecheck \beta (S(s,x_1)-S(s,x_2))\left[ (S(s,x_1)-S(s,x_2))(A(s,x_1)+I(s,x_1))\right.\\
& \left. + S(s,x_2)(A(s,x_1)+I(s,x_1)-(A(s,x_2)+I(s,x_2))) \right]\\
&\leq \widecheck \beta (S(s,x_1)-S(s,x_2))^2 + \widecheck \beta (S(s,x_1)-S(s,x_2))(A(s,x_1)+I(s,x_1)-(A(s,x_2)+I(s,x_2)))\\
&\leq \widecheck \beta(S(s,x_1)-S(s,x_2))^2+ \frac{\widecheck \beta}{2}\left[ (S(s,x_1)-S(s,x_2))^2+(A(s,x_1)-A(s,x_2))^2\right]\\
&+\frac{\widecheck \beta}{2}\left[ (S(s,x_1)-S(s,x_2))^2+(I(s,x_1)-I(s,x_2))^2\right],
\end{split}
\end{align}
and, similarly
\begin{align}\label{AGleq}
\begin{split}
&|(A(s,x_1)-A(s,x_2))(G(s,x_1)S(s,x_1)-G(s,x_2)S(s,x_2))|\\
&\leq \widecheck \beta (A(s,x_1)-A(s,x_2))\left[ (S(s,x_1)-S(s,x_2))(A(s,x_1)+I(s,x_1))\right.\\
& \left. + S(s,x_2)(A(s,x_1)+I(s,x_1)-(A(s,x_2)+I(s,x_2))) \right]\\
&\leq \frac{\widecheck \beta}{2}((A(s,x_1)-A(s,x_2))^2+(S(s,x_1)-S(s,x_2))^2) \\
&+ \widecheck \beta ((A(s,x_1)-A(s,x_2))S(s,x_2)(A(s,x_1)+I(s,x_1)-(A(s,x_2)+I(s,x_2)))\\
&\leq \frac{\widecheck \beta}{2}((A(s,x_1)-A(s,x_2))^2+(S(s,x_1)-S(s,x_2))^2) + \widecheck \beta((A(s,x_1)-A(s,x_2))^2)\\
&+ \frac{\widecheck \beta}{2}((A(s,x_1)-A(s,x_2))^2+(I(s,x_1)-I(s,x_2))^2).
\end{split}
\end{align}
Now, substituting \eqref{SGleq} and \eqref{AGleq} into \eqref{lrleq} yields 

\begin{align}\label{maxT}
\begin{split}
&\langle x(s,x_1)-x(s,x_2), u_{r(s)}(x(s,x_1))-u_{r(s)}(x(s,x_2)  \rangle\\
&\leq K|x(s,x_1)-x(s,x_2)|^2,
\end{split}
\end{align}
where $K=\max\{K_1,K_2,K_3\}$, with
$$K_1=\frac{5 \widecheck \beta}{2} + (\mu+\nu+2\gamma),$$
$$K_2=\frac{5 \widecheck \beta}{2} +\left( \frac{\gamma}{2}+\frac{3\alpha}{2}+\delta_A +\mu\right),$$
and
$$K_3=\widecheck \beta +\delta_I+\mu+ \frac{\gamma}{2} +\frac{\alpha}{2}.$$
From \eqref{itoE} and \eqref{maxT}, following similar steps as in \cite[Lemma 14]{li2019threshold}, we have that
\begin{equation}\label{Eto0}
\int_0^T  \bE\left|x(s,x_1)-x(s,x_2)\right|^2 ds \leq |x(s,x_1)-x(s,x_2)|^2 \int_0^T \exp(2Ks)ds \to 0  
\end{equation}
as $|x(s,x_1)-x(s,x_2)| \to 0$. Moreover, from \eqref{dx1x2}, it follows that
\begin{equation}\label{maxx1x2}
\max_{0 \leq t \leq T} \left| x(s,x_1)-x(s,x_2)\right|  \leq  \left| x(s,x_1)-x(s,x_2)\right|+ \int_0^T \left| u_{r(s)}(x(s,x_1))-u_{r(s)}(x(s,x_2)\right| ds.
\end{equation}
For any $\eps >0$, from Markov inequality and similar steps as in \cite[Lemma 14]{li2019threshold}, we obtain
\begin{align}\label{MI}
\begin{split}
&\bP\left\{  \int_0^T \left| u_{r(s)}(x(s,x_1))-u_{r(s)}(x(s,x_2)\right| ds \geq \eps \right\}  \\
&\leq \frac{T}{\eps^2} \bE\left[ \int_0^T  |u_{r(s)}(x(s,x_1))-u_{r(s)}(x(s,x_2)|^2 ds \right]
\end{split}
\end{align}
By similar arguments to \eqref{lrleq}, \eqref{SGleq} and \eqref{AGleq}, one can find a positive number $\bar K$ such that
$$\left| u_{r(s)}(x(s,x_1))-u_{r(s)}(x(s,x_2)\right|^2 \leq \bar K \left| x(s,x_1)-x(s,x_2)\right|^2. $$
Substituting this inequality into \eqref{MI}, by Fubini's theorem we have 
\begin{equation*}
\bP\left\{  \int_0^T \left| u_{r(s)}(x(s,x_1))-u_{r(s)}(x(s,x_2)\right| ds \geq \eps \right\} \leq \frac{\bar K T}{\eps^2}\int_0^T \bE \left| x(s,x_1)-x(s,x_2)\right|^2 ds. 
\end{equation*}
By \eqref{Eto0}, then we get
\begin{equation}\label{Pto0}
\bP\left\{  \int_0^T \left| u_{r(s)}(x(s,x_1))-u_{r(s)}(x(s,x_2)\right| ds \geq \eps \right\} \to 0
\end{equation}
as $|x(s,x_1)-x(s,x_2)| \to 0$. Definitively, by combining \eqref{maxx1x2} with \eqref{Pto0}, we obtain the claim \eqref{Phmetric}.
\end{proof}

\begin{lemma}\label{feller}
The Markov process $\{(x(t),\eta(t),r(t), t \geq 0\}$ is Feller. 
\end{lemma}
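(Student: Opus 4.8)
The plan is to establish the (weak) Feller property in its standard form: for every $f \in C_b(\cH)$ and every $t \geq 0$, the map $(x,s,i) \mapsto \bE_{(x,s,i)}\!\big[f(x(t),\eta(t),r(t))\big]$ is continuous on $\cH$; it is automatically bounded by $\|f\|_\infty$. So let $(x_n,s_n,i_n) \to (x_0,s_0,i_0)$ in the metric $\hbar$. Because $o(i,j)=1$ whenever $i \neq j$, convergence in $\hbar$ forces $i_n = i_0 =: i$ for all large $n$, so it remains to handle $x_n \to x_0$ in $\mathring\Gamma$ and $s_n \to s_0$ in $\bR_+$ simultaneously. Write $(\eta_n(\cdot),r_n(\cdot))$ for the backward-recurrence-time/environment pair started from $(s_n,i)$, and $x(\cdot,x_n;r_n)$ for the solution of \eqref{sairs3_s} with initial value $x_n$ driven by $r_n(\cdot)$; the goal is to realize all of these on one probability space so that $f\big(x(t,x_n;r_n),\eta_n(t),r_n(t)\big) \to f\big(x(t,x_0;r_0),\eta_0(t),r_0(t)\big)$ $\bP$-a.s., after which bounded convergence closes the proof.

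The key step is the coupling of the driving processes. Conditionally on having been in state $i$ for an elapsed time $s_n$, the residual sojourn time has distribution function $u \mapsto 1-\frac{1-F_i(s_n+u)}{1-F_i(s_n)}$; since $f_i>0$ on $(0,\infty)$ by \textbf{(H1)}(ii) we have $F_i(s_0)<1$, and since $F_i$ is continuous these residual distribution functions, together with their (continuous, strictly increasing) inverses, depend continuously on the shift, uniformly on compact $u$-sets. Generating the first residual sojourn time by the inverse-distribution-function method with a single uniform random variable shared across $n$, coupling the post-jump state (through $(p_{i,j})$) identically for all $n$, and using identical subsequent sojourn times, I obtain: the jump times converge a.s., hence $r_n \to r_0$ pointwise off the jump set of $r_0$, and $\eta_n(u) \to \eta_0(u)$ at every continuity point of $\eta_0$, $\bP$-a.s. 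That the jump set of $r_0$ is locally finite (hence null), which legitimizes these pointwise statements, follows from the hazard-rate lower bound \textbf{(H1)}(iii).

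It remains to transfer the convergence to the $x$-coordinate. On the coupled space both $x(\cdot,x_n;r_n)$ and $x(\cdot,x_0;r_0)$ solve $x(t)=x(0)+\int_0^t u_{r(u)}(x(u))\,du$, and since each $u_i$ is bounded on $\Gamma$ and Lipschitz in $x$ uniformly in $i$ — with $|u_i(x)-u_i(y)|\le \sqrt{\bar K}\,|x-y|$, the estimate used in the proof of Lemma~\ref{lemma:hmetric} — the triangle inequality and Gronwall's lemma give
\begin{equation*}
\big|x(t,x_n;r_n)-x(t,x_0;r_0)\big| \leq \Big(|x_n-x_0| + \int_0^t \big|u_{r_n(u)}(x(u,x_0;r_0))-u_{r_0(u)}(x(u,x_0;r_0))\big|\,du\Big)\,e^{\sqrt{\bar K}\,t}.
\end{equation*}
The integral tends to $0$ $\bP$-a.s.\ by dominated convergence, since its integrand is bounded and vanishes for a.e.\ $u$ by the previous step; hence $x(t,x_n;r_n)\to x(t,x_0;r_0)$ $\bP$-a.s. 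In the special case $s_n=s_0$ this recovers the content of Lemma~\ref{lemma:hmetric}.

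Finally, fix $t\ge 0$. Since the sojourn times have densities, the event that $t$ is a jump time of $r_0$ is $\bP$-null, so on a full-probability set $\eta_0$ and $r_0$ are continuous at $t$, whence $\eta_n(t)\to\eta_0(t)$, $r_n(t)=r_0(t)$ for large $n$, and $x(t,x_n;r_n)\to x(t,x_0;r_0)$; thus $\hbar\big((x(t,x_n;r_n),\eta_n(t),r_n(t)),(x(t,x_0;r_0),\eta_0(t),r_0(t))\big)\to 0$ $\bP$-a.s. For $f\in C_b(\cH)$ this gives $f(x(t,x_n;r_n),\eta_n(t),r_n(t))\to f(x(t,x_0;r_0),\eta_0(t),r_0(t))$ $\bP$-a.s., and bounded convergence yields convergence of the expectations, i.e.\ continuity of $(x,s,i)\mapsto\bE_{(x,s,i)}[f(x(t),\eta(t),r(t))]$. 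The main obstacle is the coupling step: one must control the dependence of the entire trajectory on the initial backward recurrence time $s$ — which is exactly where the smoothness of $F_i$ in \textbf{(H1)}(ii) is needed — and be careful that, the environment being a jump process, convergence $r_n(t)\to r_0(t)$ at the fixed time $t$ holds only outside the null event that $t$ is a jump time of the limiting environment.
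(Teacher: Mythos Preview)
Your argument is correct and supplies what the paper omits: the paper merely says the proof ``requires the claim of Lemma~\ref{lemma:hmetric} and is analogous to \cite[Lemma~15]{li2019threshold}''. Your route --- couple the driving semi-Markov processes by generating the first residual sojourn time with a common uniform via the inverse-CDF method (using the continuity of $F_i$ from \textbf{(H1)}(ii) and strict monotonicity from $f_i>0$), then couple all subsequent jumps identically; transfer the resulting pathwise convergence of $r_n$ to the $x$-coordinate via the uniform Lipschitz/Gronwall estimate underlying Lemma~\ref{lemma:hmetric}; and close with bounded convergence at a fixed $t$ that is $\bP$-a.s.\ not a jump time --- is the standard Feller argument for piecewise deterministic Markov processes, and is presumably what the cited reference does as well. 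You correctly isolate the only content not already contained in Lemma~\ref{lemma:hmetric}, namely continuity in the initial age $s$, and you handle it cleanly.

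One minor misattribution: local finiteness of the jump set (non-explosion of the semi-Markov process) does not need \textbf{(H1)}(iii). It follows already from \textbf{(H1)}(ii) --- each $F_i$ has a density, hence no atom at $0$ --- together with $|\cM|<\infty$: choosing $c>0$ small enough that $\min_{i\in\cM}(1-F_i(c))>0$, the second Borel--Cantelli lemma gives $\sigma_k>c$ infinitely often a.s., whence $\sum_k\sigma_k=\infty$. The hazard-rate lower bound in \textbf{(H1)}(iii) yields exponential tails of the sojourn times, which is stronger than required here. This does not affect the validity of your proof.
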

The proof requires the claim of Lemma \ref{lemma:hmetric} and it is analogous to that of \cite[Lemma 15]{li2019threshold}, thus we omit it.\\

At this point, we can prove the existence of an invariant probability measure by using Lemma \eqref{exclusion}.
\begin{theorem}\label{ipm}
Suppose that system \eqref{sairs3_s} is persistent in time mean, then the Markov process $\{(x(t),\eta(t),r(t), t \geq 0\}$ has an invariant probability measure $\kappa^*$ on the state space $\cH$.
\end{theorem}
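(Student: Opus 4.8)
The plan is to verify the two alternatives in the exclusion principle of Lemma~\ref{exclusion} cannot both fail, i.e.\ to rule out alternative (b), which forces the existence of an invariant probability measure (alternative (a)). Since Lemma~\ref{feller} gives the Feller property of $\Phi_t=(x(t),\eta(t),r(t))$ on the complete separable metric space $(\cH,\hbar)$, the dichotomy applies. The goal is therefore to exhibit \emph{one} compact set $C\subset\cH$ and \emph{one} initial condition (indeed, we only need to contradict the ``sup over all $\kappa$ goes to $0$'' statement, so it suffices to find a single initial distribution, e.g.\ a point mass at a chosen $(x(0),0,r(0))\in\cH$) for which
\[
\limsup_{t\to\infty}\frac{1}{t}\int_0^t \bP\big(u,(x(0),0,r(0)),C\big)\,\mathrm{d}u>0 .
\]

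First I would build the compact set from the persistence estimates already established. By hypothesis system~\eqref{sairs3_s} is persistent in time mean, so by Theorem~\ref{pers1} (or Theorem~\ref{pers2} in the non-equal case) there are explicit positive constants $c_S,c_{AI}$ with
\[
\liminf_{t\to\infty}\frac1t\int_0^t S(u)\,\mathrm du\ge c_S,\qquad
\liminf_{t\to\infty}\frac1t\int_0^t (A(u)+I(u))\,\mathrm du\ge c_{AI}\qquad\text{a.s.}
\]
Together with the trivial upper bounds coming from $x(t)\in\Gamma$ for all $t$ (Theorem~\ref{invset}) and from $\eta(t)$ having, by Assumption~\textbf{(H1)}(iii), a conditional holding time with a uniform exponential-type tail (so that $\frac1t\int_0^t \mathbf 1_{\{\eta(u)\le H\}}\,\mathrm du$ is bounded below by a positive constant for $H$ large, uniformly along the trajectory), one can choose $\varepsilon>0$ small and $H<\infty$ large so that the set
\[
C=\Big\{(x,s,i)\in\cH:\ S\ge\varepsilon,\ A+I\ge\varepsilon,\ S+A+I\le 1-\varepsilon\ \text{(or just }x\in\Gamma),\ 0\le s\le H\Big\}\times\cM
\]
is compact in $(\cH,\hbar)$ and has the property that the occupation measure $\frac1t\int_0^t \mathbf 1_C(\Phi_u)\,\mathrm du$ is bounded below by some $\theta>0$ with probability one, for every deterministic start in $\mathring\Gamma\times\mathbb R_+\times\cM$. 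Taking expectations and using Fubini gives $\frac1t\int_0^t\bP(u,(x(0),0,r(0)),C)\,\mathrm du\ge\theta'>0$ along a subsequence $t\to\infty$, which is exactly the negation of Lemma~\ref{exclusion}(b). Hence Lemma~\ref{exclusion}(a) holds: there exists an invariant probability measure $\kappa^*$ on $\cH$.

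The main obstacle I anticipate is the bookkeeping needed to control the backward–recurrence coordinate $\eta(t)$: the state space in that direction is unbounded, so persistence of $(S,A,I)$ alone does not immediately confine the full triple to a compact set. The fix is to use Assumption~\textbf{(H1)}(ii)--(iii), which guarantees that the hazard rate $f_i/(1-F_i)$ is bounded below by $\varepsilon_i>0$; this makes the time spent with $\eta(u)$ above a large threshold $H$ negligible in the Cesàro sense, uniformly in the starting point, so that cutting $\eta$ at $H$ costs only an arbitrarily small fraction of the occupation time. A secondary, purely technical point is to make sure the chosen $C$ is genuinely compact for the metric $\hbar$ in~\eqref{hmetric}; since the $\cM$–component is finite and the $(x,s)$–component lies in the closed bounded set $\overline{\{S\ge\varepsilon,A+I\ge\varepsilon\}\cap\Gamma}\times[0,H]$, this is immediate once the estimates above are in place. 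Everything else — the Feller property, the persistence bounds, the ergodic averaging — is quoted from the lemmas and theorems proved earlier, so the proof reduces to assembling these ingredients and invoking Lemma~\ref{exclusion}.
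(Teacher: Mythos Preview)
Your proposal is correct and follows essentially the same route as the paper: invoke the Feller property (Lemma~\ref{feller}), apply the exclusion principle (Lemma~\ref{exclusion}), and rule out alternative~(b) by exhibiting a compact set---built from the persistence lower bound on $A+I$ together with a tail control on $\eta$---that is visited with positive Ces\`aro-averaged probability. The one refinement the paper makes that you gloss over is that $\cH=\mathring\Gamma\times\mathbb R_+\times\cM$ is \emph{open} in the $x$-coordinate, so your set $\{A+I\ge\varepsilon\}$ need not be compact in $\cH$; the paper sidesteps this by first working on the slightly larger space $\tilde\cH=(\Gamma\setminus\{A+I=0\})\times\mathbb R_+\times\cM$, obtaining $\kappa^*$ there, and then observing that persistence forces $\kappa^*(\partial\tilde\Gamma\times\mathbb R_+\times\cM)=0$, so $\kappa^*$ is also invariant on $\cH$.
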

\begin{proof}
Let us consider the process $\{(x(t),\eta(t),r(t)), t\geq 0\}$ on a larger state space
$$\tilde\cH=\tilde{ \Gamma} \times \mathbb R_+ \times \cM,$$
where $\tilde {\Gamma}= \Gamma \setminus \{(S,A,I) : A+I=0 \}$. By Lemma \ref{exclusion} and \ref{feller}, we can prove the existence of an invariant probability measure $\kappa^*$ for the Markov process $h(t)=\{(x(t),\eta(t),r(t)), t\geq 0\}$ on $\tilde \cH$, provided that a compact subset $C \subset \tilde \cH$ exists such that  
\begin{equation}\label{liminfP}
 \liminf_{t \to \infty} \frac{1}{t}\int_0^t \left(\int_{\tilde \cH} \bP(u,h,C)\kappa(dh) \right) du=  \liminf_{t \to \infty} \frac{1}{t}\int_0^t \bP(u,h_0, C)du>0,
\end{equation}
for some initial distribution $\kappa=\delta_{h_0}$ with $h_0 \in \tilde \cH$, where $\delta.$ is the Dirac function. Once the existence of $\kappa^*$ is proved, we can easily see that $\kappa^*$ is also an invariant probability measure of $\{(x(t),\eta(t),r(t)), t\geq 0\}$ on $\cH$. Indeed, it is easy to prove that for any initial value $(x(0),\eta(0),r(0)) \in \mathring \Gamma \times \mathbb R^+ \times \cM$, 
the solution $x(t)$ of system \eqref{sairs3_s} does not reach the boundary $\partial \tilde \Gamma$ 
when the system is persistent in time mean. Consequently, $k^*(\partial \tilde \Gamma \times \mathbb R_+ \times \cM)=0$, which implies therefore that $k^*$ is also an invariant probability measure on $\cH$. Thus, to complete the proof we just have to find a compact subset $C \subset \tilde \cH$ satisfying \eqref{liminfP}.
Hereafter, in the proof, we assume, without loss of generality that $\eta(0)=0$. Since system \eqref{sairs3_s} is persistent in time mean, there exists a constant $\iota >0$ such that
$$\liminf_{t \to \infty} \frac{1}{t} \int_0^t (I(u)+A(u)) du \geq \iota \qquad \text{a.s.}$$
Following similar steps as in \cite[Theorem 16]{li2019threshold}, we obtain
\begin{equation}\label{Piota}
\liminf_{t \to \infty}  \int_0^t \bP \left\{ I(u)+A(u) \geq \frac{\iota}{2}\right\} du \geq \frac{\iota}{2}.   
\end{equation}
Now, we have to verify that $\bP\{\eta(u)>K \}< \eps$ holds for any $u \in \mathbb R_+$, where $\eps$ is a positive conctant such that $\eps < \frac{\iota}{4}$ and $K>0$ is a sufficiently large constant such that $\max_{i \in \cM}\left\{ 1-F_i(K)\right\} < \eps/2$. To do this, we follow similar steps as in \cite[Theorem 16]{li2019threshold}. This implies that
$$\liminf_{t \to \infty}  \int_0^t \bP \left\{ I(u)+A(u) \geq \frac{\iota}{2}\right\} du \leq \eps +\frac{1}{t} \int_0^t  \bP \left\{ I(u)+A(u) \geq \frac{\iota}{2}, \eta(u) \leq K \right\} du.$$
Combining this with \eqref{Piota}, we have
$$\liminf_{t \to \infty} \frac{1}{t} \int_0^t  \bP \left\{ I(u)+A(u) \geq \frac{\iota}{2}, \eta(u) \leq K \right\} du \geq \frac{\iota}{4}.$$
Let us consider the compact set $C= \cD\times [0,K] \times \cM \in \tilde \cH,$ where the set $\cD$ is 
$$\left\{ (S,A,I) \in \Gamma : 0 \leq S+A+I \leq 1, I+A \geq \frac{\iota}{2}\right\}.$$
Then, it follows that
\begin{align*}
\liminf_{t \to \infty} \frac{1}{t} \int_{0}^t  \bP(u,h_0,C) du &=   \liminf_{t \to \infty} \frac{1}{t} \int_{0}^t \bP \left\{ x(u, \omega, x(0)) \in \cD, \eta(u) \leq K \right\} du \\
&=  \liminf_{t \to \infty} \frac{1}{t} \int_{0}^t \bP \left\{ (I(u)+A(u)) \geq \frac{\iota}{2}, \eta(u) \leq K   \right\} \\
&\geq \frac{\iota}{4}.
\end{align*}
Since $\iota>0$, we arrive to our claim.
\end{proof}
Now, we show that the invariant probability measure $\kappa^*$ is unique by using the property of Harris recurrence and positive Harris recurrence (for standard definitions see, e.g., \cite{meyn1993stability,li2019threshold}). 
\begin{proposition}
 Suppose that system \eqref{sairs3_s} is persistent in time mean and that assumptions \textbf{(H2)}-\textbf{(H3)} hold, then the Markov process $\{(x(t),\eta(t),r(t)), t\geq 0\}$ is positive Harris recurrent. Thus, the invariant probability measure $\kappa^*$ of $\{(x(t),\eta(t),r(t)), t\geq 0\}$ on the state space $\cH$ is unique.  
\end{proposition}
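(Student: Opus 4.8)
The plan is to establish positive Harris recurrence of $\{(x(t),\eta(t),r(t)),t\ge 0\}$ and then invoke the standard fact that a positive Harris recurrent Markov process admits a \emph{unique} invariant probability measure, so that the $\kappa^*$ produced in Theorem \ref{ipm} is the only one. The starting point is Theorem \ref{limitset}: under persistence in time mean together with \textbf{(H2)}–\textbf{(H3)}, the set $\bar\Psi$ is the (deterministic) omega-limit set of every solution starting in $\mathring\Gamma$, and moreover by part (b) the set $\Psi$ \emph{absorbs} all positive solutions in finite time almost surely. Thus all the recurrence analysis can be localized to $\bar\Psi\times\mathbb R_+\times\cM$, a compact subset of $\cH$.

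The key steps, in order, are as follows. First, I would show that the process enters a fixed compact set $C\subset\cH$ with probability bounded below, uniformly over initial conditions in compacta — this is essentially the content of the displayed estimates in the proof of Theorem \ref{ipm} (the bound $\liminf_{t\to\infty}\frac1t\int_0^t\bP\{I(u)+A(u)\ge\iota/2,\ \eta(u)\le K\}\,du\ge\iota/4$), combined with the absorbing property from Theorem \ref{limitset}(b). Second, I would verify a local minorization (Doeblin-type) condition: there is a nonempty open set $U\subset\mathring\Gamma\times\mathbb R_+\times\cM$, a time $t_0>0$, and a nontrivial measure $\varphi$ such that $\bP(t_0,h,\cdot)\ge\varphi(\cdot)$ for all $h\in U$. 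This is exactly where assumption \textbf{(H3)} enters: at a point $x_*\in\Psi$ satisfying the Lie bracket (Hörmander-type) condition, the accessibility set of the piecewise-deterministic process $\{(x(t),\eta(t),r(t))\}$ has nonempty interior, and by a support-theorem / controllability argument (as in \cite{benaim2015qualitative}) the transition kernel has an absolutely continuous component with a density bounded below on a neighbourhood. Third, combining the uniform return to $C$ (step one) with the minorization on a subset of $C$ (step two) via the standard drift/small-set machinery of \cite{meyn1993stability} gives that $C$ is a \emph{petite set} reached infinitely often, hence the chain is Harris recurrent; the existence of the finite invariant measure $\kappa^*$ from Theorem \ref{ipm} upgrades this to \emph{positive} Harris recurrence. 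Finally, positive Harris recurrence implies uniqueness of the invariant probability measure, which is the assertion.

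I would carry out steps one and three largely by quoting the corresponding arguments in \cite[Theorem 16 and its sequel]{li2019threshold}, since the structure is identical once Theorem \ref{limitset} and the Feller property (Lemma \ref{feller}) are in hand; the role of persistence is only to keep the trajectory away from $\partial\tilde\Gamma$, as already noted in the proof of Theorem \ref{ipm}. The main obstacle is step two: making precise that \textbf{(H3)} at a single point $x_*\in\Psi$ forces the transition probabilities to have a nondegenerate smooth component. One has to use that the backward recurrence time $\eta(t)$ turns the system into a piecewise-deterministic Markov process whose controls are the switching times, check that the vector fields $u_1,u_2$ together with their iterated Lie brackets span $\mathbb R^3$ at $x_*$ (so the bracket-generating condition of the control system holds), and then appeal to the Hörmander-type hypoellipticity / Stroock–Varadhan support theorem for such PDMPs to conclude that $\bP(t_0,h,\cdot)$ dominates a measure with a continuous positive density near $x_*$ for $h$ in a neighbourhood; density of $\Psi$ in $\bar\Psi=\tilde\Omega$ then spreads the minorization to a neighbourhood of the whole omega-limit set. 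This is routine in spirit but technical, and it is precisely the point at which \textbf{(H3)} is indispensable. I would therefore state that the proof follows the lines of \cite[Theorem 17]{li2019threshold}, give the minorization argument in the detail sketched above, and omit the remaining bookkeeping.
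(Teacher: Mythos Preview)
Your proposal is correct and follows essentially the same route as the paper: defer the Harris recurrence argument to the corresponding result in \cite{li2019threshold} (the paper cites Lemma~20 there rather than Theorem~17, but the substance is the same), then combine Harris recurrence with the existence of $\kappa^*$ from Theorem~\ref{ipm} to obtain positive Harris recurrence, and conclude uniqueness from the standard fact for positive Harris recurrent processes. If anything, you spell out in more detail what the minorization step in \cite{li2019threshold} actually requires (the role of \textbf{(H3)} and the Lie-bracket/support argument), whereas the paper simply points to that reference and to \cite{getoor1980transience,meyn1993stability} for the final uniqueness step.
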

  \begin{proof}
   The Harris recurrence of the process $\{(x(t),\eta(t),r(t)), t\geq 0\}$ follows by similar steps as in \cite[Lemma 20]{li2019threshold}. Thus, from Theorem \ref{ipm}, we can conclude that the process is positive Harris recurrent. Therefore, the invariant probability measure $\kappa^*$ of $\{(x(t),\eta(t),r(t)), t\geq 0\}$ on $\cH$ is unique \cite{getoor1980transience,meyn1993stability}.
   \end{proof}


\section{Numerical experiments}\label{num_exp}

In this section, we provide some numerical investigations to verify the theoretical results. We assume that the conditional holding time distribution $F_i(\cdot)$
of the semi-Markov process in state $i$ is a gamma distribution $\Gamma(k_i,{\theta_i})$, $i=1,\ldots, M$, whose probability density function is

$$f_i(t; k_i,\theta_i )=\frac{\theta_i^{k_i} t^{k_i-1} e^{-\theta_i t}}{\Gamma(k_i)}, \qquad \text{for} \quad t>0,$$
and the cumulative distribution function is given by
$$F(t;k_i,\theta_i)=\int_0^t f(u;k_i,\theta_i) du, $$
where \ste{$k_i >0$}, $\theta_i >0$, and $\Gamma(\cdot)$ is the complete gamma function. Let us note that if $k_i=1$, the gamma distribution becomes the exponential distribution with parameter $\theta_i$.
 
We discuss the almost sure extinction and persistence in time mean of the system in the following cases. 

\begin{figure}[h]
    \centering
    \begin{subfigure}{0.49\textwidth}
        \centering
        \includegraphics[width=0.9\textwidth]{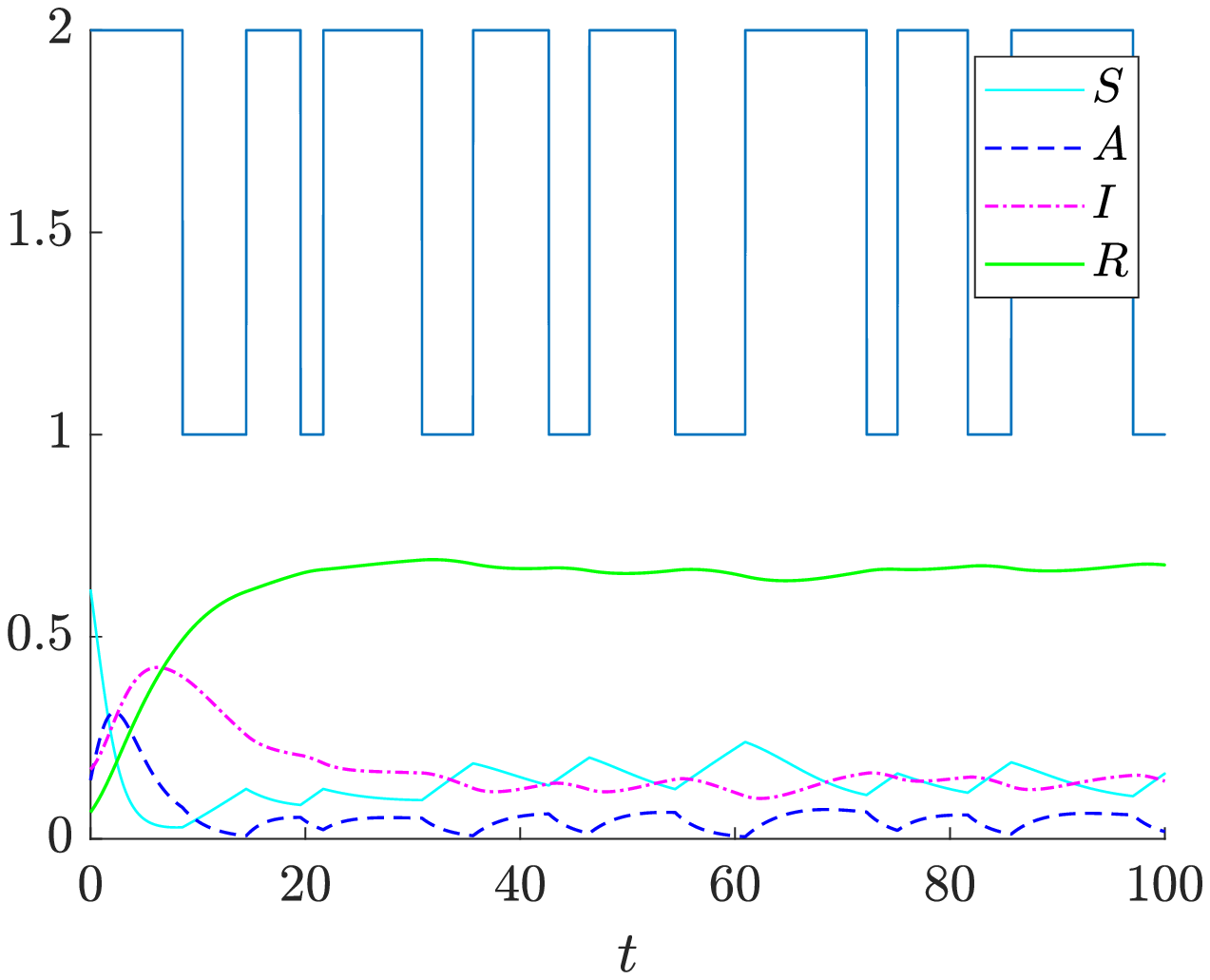}\put(-210,150){a)}
    \end{subfigure}\hfill
    \begin{subfigure}{0.49\textwidth}
        \centering
        \includegraphics[width=0.9\textwidth]{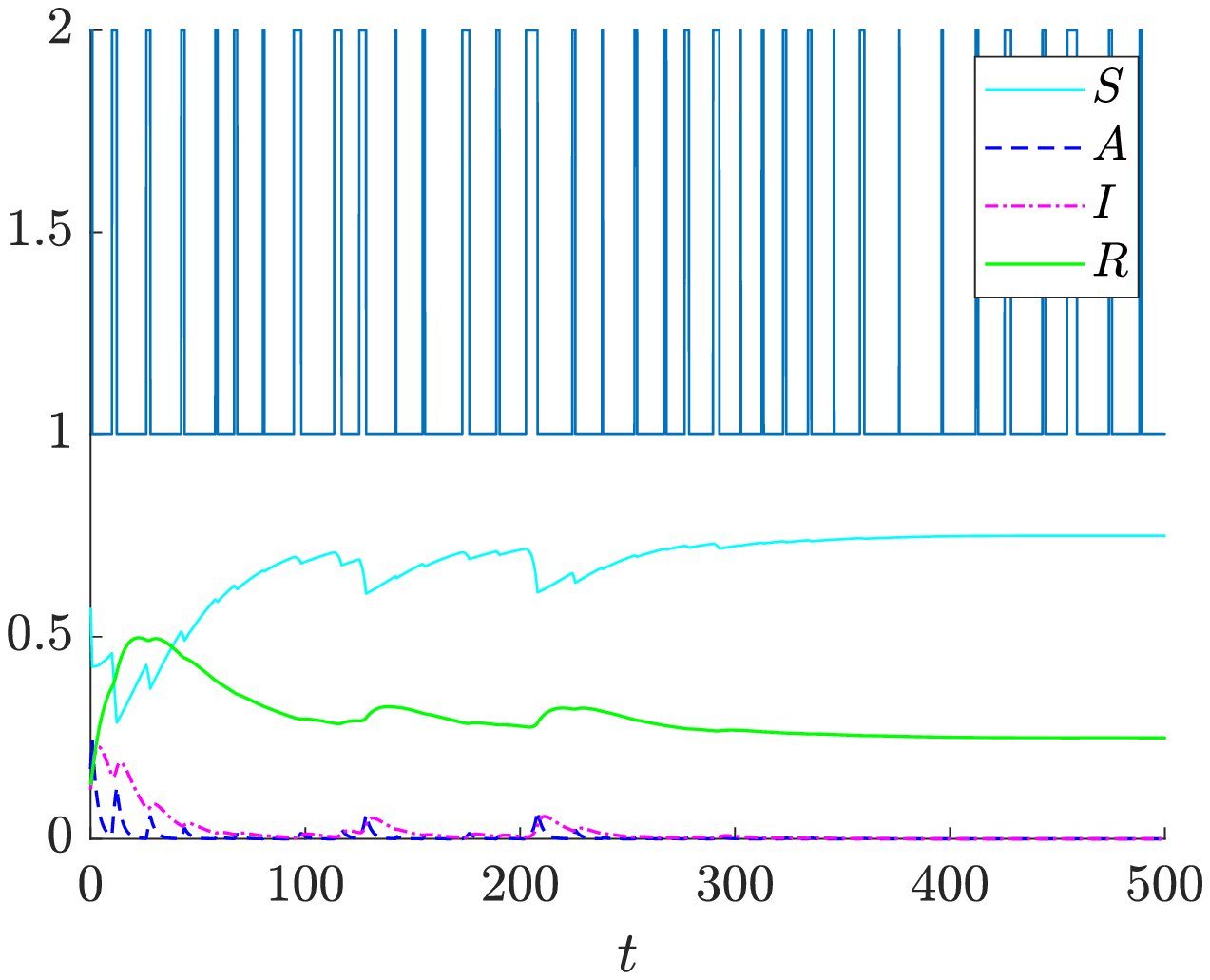}\put(-210,150){b)}
    \end{subfigure} 
    \caption{Dynamical behaviour of system \eqref{sairs_s} and the semi-Markov process $r(t)$. The parameter values are:  $\beta_A(1)=0.004$, $\beta_I(1)=0.008$, $\beta_A(2)=0.97$, $\beta_I(2)=0.99$, $\delta_A=0.105$, $\delta_I=0.1$, \ste{$\mu=1/(60*365)$}, $\alpha=0.3$, $\gamma=0.03$, and $\nu=0.01$. The parameters of $F_1$ and $F_2$ are respectively: a) $k_1=6$, $\theta_1=0.8$ and $k_2=12$, $\theta_2=0.8$, b) $k_1=15$, $\theta_1=0.8$ and $k_2=2$, $\theta_2=0.8$.}
    \label{fig:PersVax}
    \end{figure}
    
  \begin{figure}[h]
    \centering
    \includegraphics[width=0.5\textwidth]{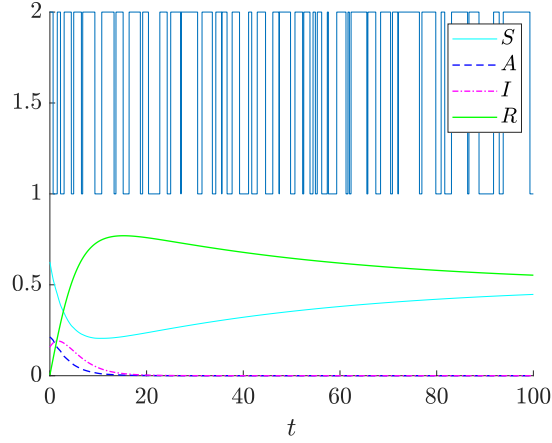}
    \caption{Dynamical behaviour of system \eqref{sairs_s} and the semi-Markov process $r(t)$. The parameter values are:  $\beta_A(1)=0.55$, $\beta_I(1)=0.5$, $\beta_A(2)=0.68$, $\beta_I(2)=0.58$, $\delta_A=0.3$, $\delta_I=0.4$, \ste{$\mu=1/(60*365)$}, $\nu=0.01$, $\alpha=0.5$, $\gamma=0.01$.  The parameters of $F_1$ and $F_2$ are $k_1=0.9$, $\theta_1=0.8$ and $k_2=2.5$, $\theta_2=0.8$, respectively}\label{fig:comparethr}
    \end{figure}

\emph{Case 1: $\beta_A(r)\neq\beta_I(r)$, $\delta_A(r)\neq\delta_I(r)$.} We consider two states: $r_1$, in which the epidemic dies out, and $r_2$ in which it persists. 
In Fig.\ref{fig:PersVax} a), we consider $F_1$, $F_2$ with parameters $k_1=6$, $\theta_1=0.8$ and $k_2=12$, $\theta_2=0.8$, respectively. Consequently, the respective mean sojourn times are $m_1=7.5$ and $m_2=15$. 
The other parameters are: $\beta_A(1)=0.004$, $\beta_I(1)=0.008$, $\beta_A(2)=0.97$, $\beta_I(2)=0.99$, $\delta_A=0.105$, $\delta_I=0.1$, \ste{$\mu=1/(60*365)$}, $\alpha=0.3$, $\gamma=0.03$, and $\nu=0.01$.
 We have that $$\sum_{r \in \cM} \pi_r m_r (\hat \beta(r)S_0-(\widecheck \delta+\mu))=4.283>0,$$ thus the whole system is stochastically persistent in time mean.
In Fig.\ref{fig:PersVax} b), we consider $F_1$, $F_2$ with parameters $k_1=15$, $\theta_1=0.8$ and $k_2=2$, $\theta_2=0.8$, respectively; we have $m_1=18.75$, $m_2=2.5$. The other parameters are the same as in a). In this case, $$\sum_{r \in \cM} \pi_r m_r (\widecheck \beta(r)S_0-(\hat \delta+\mu))=-0.0782<0,$$ and the epidemic will go extinct almost surely, in the long run. Thus, we can see the relevant role played by the mean sojourn times, indeed in the two figures the parameters are the same, what changes is that in Fig.\ref{fig:PersVax} a) the mean sojourn time in the persistent state is higher than that in the state of extinction, while in Fig.\ref{fig:PersVax} b) the vice versa occurs.

\emph{Case 2: $\beta_A(r)\neq\beta_I(r)$, $\delta_A(r)\neq\delta_I(r)$.} Here, we want to compare the two sufficient conditions for the almost sure extinction \eqref{condextdiff1} and \eqref{condext2}. In Fig.\ref{fig:comparethr}, we consider again two states $r_1$, in which the epidemic dies out, and $r_2$ in which it persists. The parameters are: $\beta_A(1)=0.55$, $\beta_I(1)=0.5$, $\beta_A(2)=0.68$, $\beta_I(2)=0.58$, $\delta_A=0.3$, $\delta_I=0.4$, \ste{$\mu=1/(60*365)$}, $\nu=0.01$, $\alpha=0.5$, $\gamma=0.01$.
Let us consider $F_1$ and $F_2$ with $k_1=0.9$, $\theta_1=0.8$ and $k_2=2.5$, $\theta_2=0.8$, respectively. Hence, $m_1=1.125$ and $m_2=3.125$. We have

$$\sum_{r \in \cM} \pi_r m_r (\widecheck \beta(r)S_0-(\hat \delta+\mu))=0.05>0 \qquad \text{and} \qquad \sum_{r \in \cM}  \pi_r m_r \lambda_1(B(r)+B(r)^T)= -0.1959<0$$

Thus, in this case condition \eqref{condextdiff1} is not satisfied but \eqref{condext2} does. Vice versa, in Fig.\ref{fig:PersVax} b), we have the opposite case, that is condition \eqref{condextdiff1} is less than zero, while 
    \eqref{condext2} is equal to $1.0084$, hence greater than zero. By Theorems \ref{thm:extdiff1} and Theorem \ref{thm:extdiff2}, we have the almost sure extinction in both cases, as we can also see from Figs. \ref{fig:comparethr} and \ref{fig:PersVax} b).
    
      \begin{figure}[h]
 \centering
    \begin{minipage}{0.4\textwidth}
    \includegraphics[width=\textwidth,height=5cm]{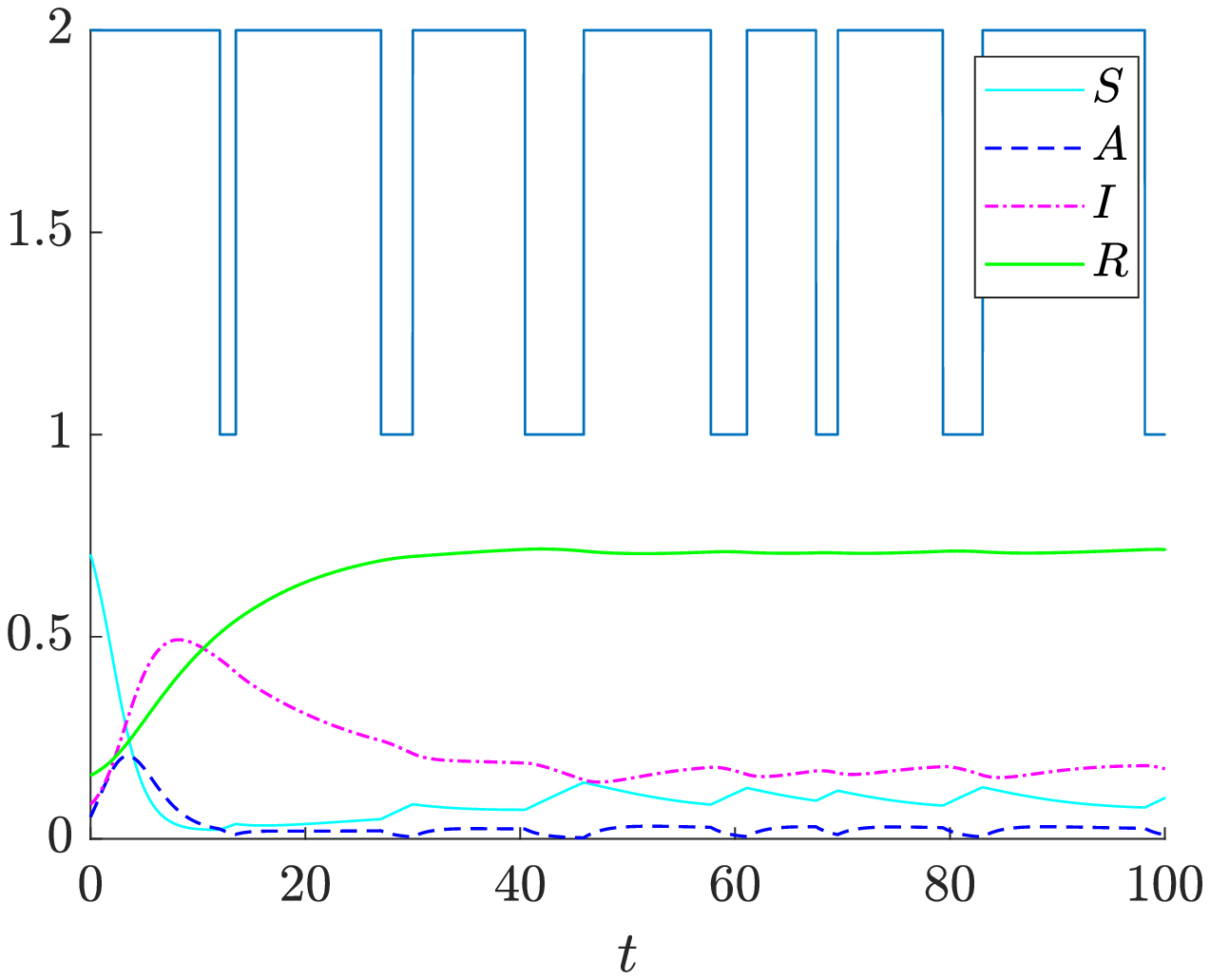}\put(-210,130){a)}
\end{minipage}
   \hskip4mm
   \begin{minipage}{0.4\textwidth}
    \includegraphics[width=\textwidth,height=5cm]{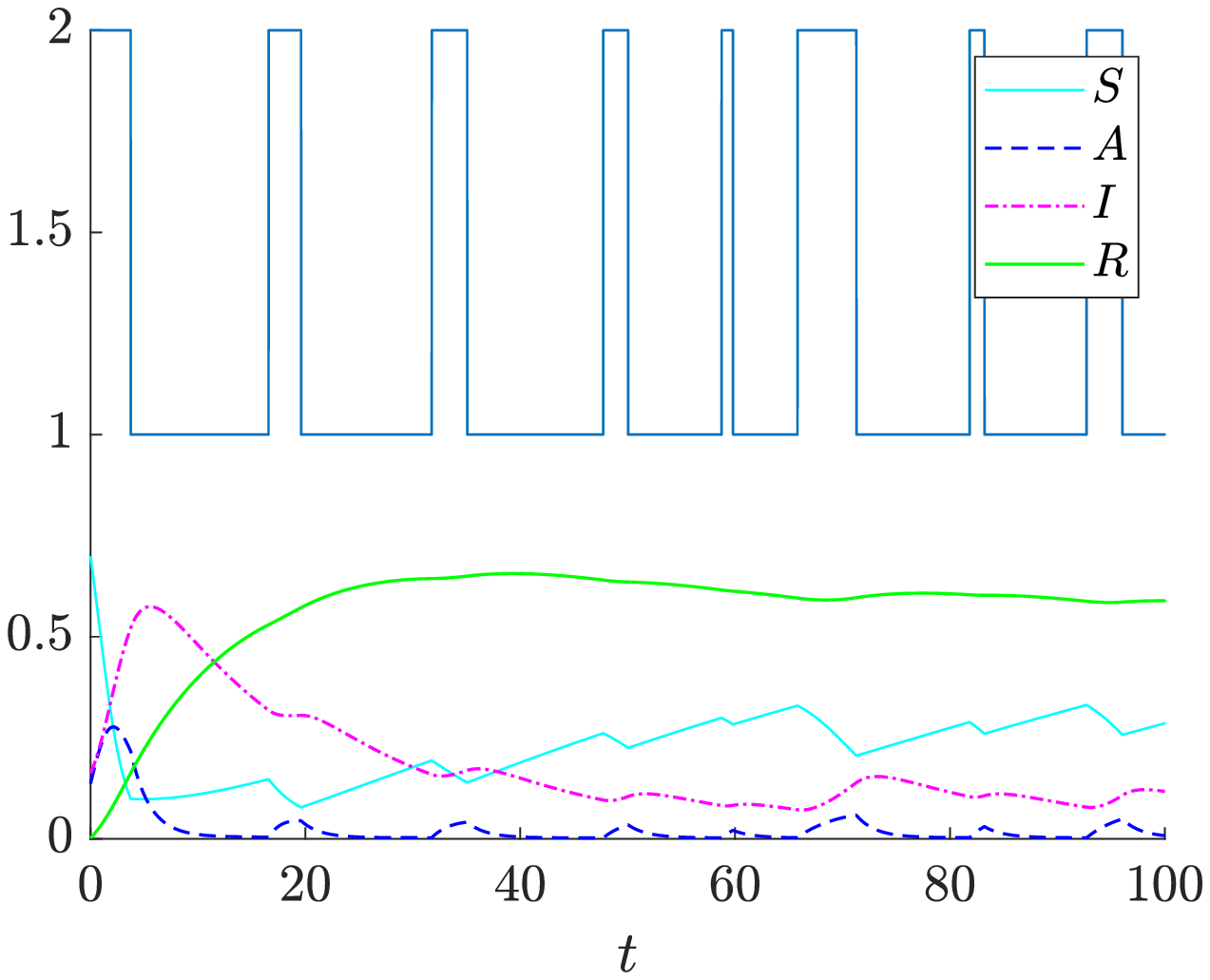}\put(-210,130){b)}
\end{minipage}
 \hskip4mm
   \begin{minipage}{0.4\textwidth}
    \includegraphics[width=\textwidth,height=5cm]{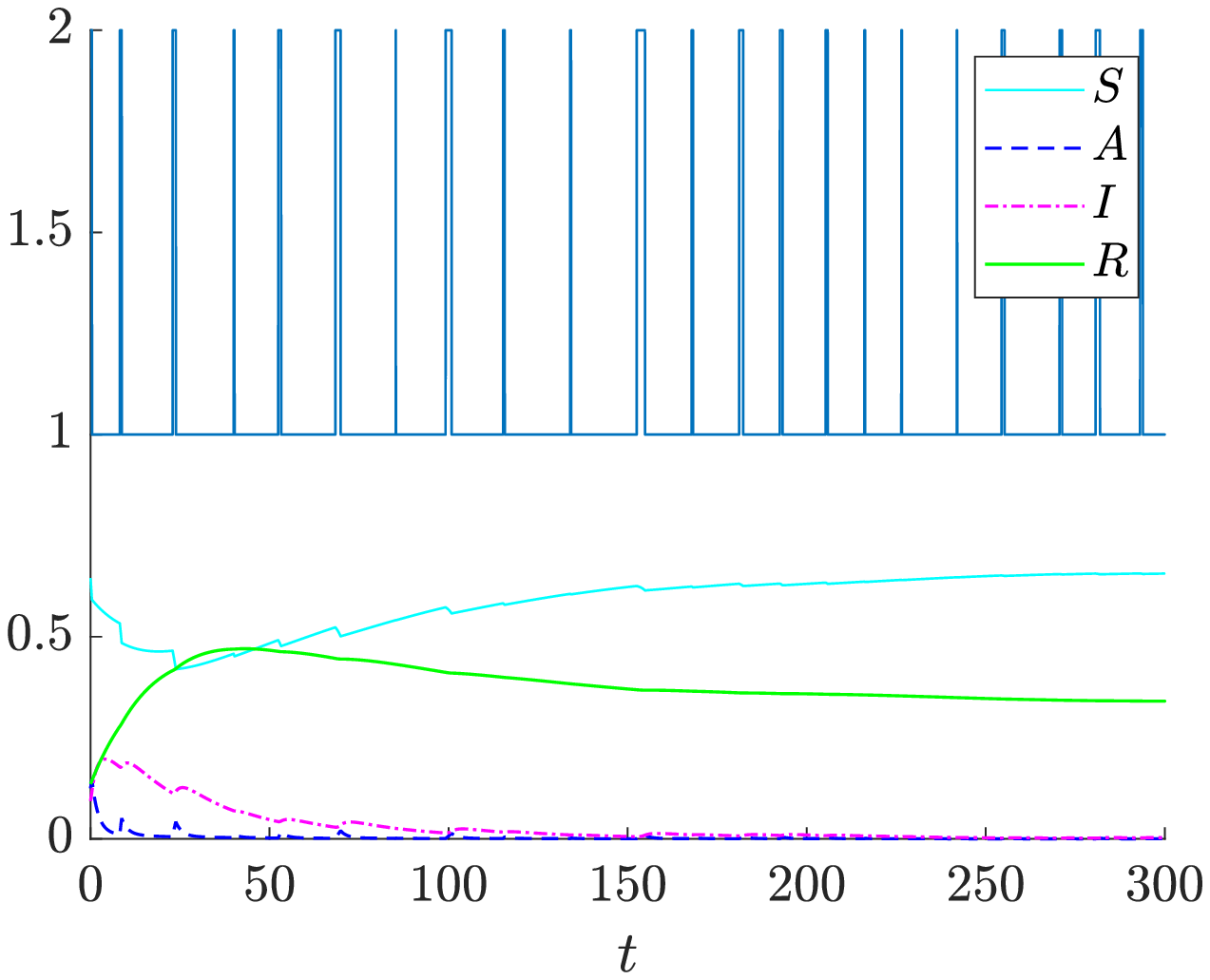}\put(-210,130){c)}
\end{minipage}
    \caption{Dynamical behaviour of system \eqref{sairs_s} and the semi-Markov process $r(t)$. The parameter values are:  $\beta_A(1)=\beta_I(1)=0.05$, $\beta_A(2)=\beta_I(2)=0.9$, $\delta_A=\delta_I=0.07$, \ste{$\mu=1/(60*365)$}, $\nu=0.01$, $\alpha=0.5$, $\gamma=0.02$. The parameters of $F_1$ and $F_2$ are respectively: a) $k_1=4$, $\theta_1=0.8$ and $k_2=15$, $\theta_2=0.8$, b) $k_1=15$, $\theta_1=0.8$ and $k_2=3$, $\theta_2=0.8$, c)
    $k_1=18$, $\theta_1=0.8$ and $k_2=1$, $\theta_2=0.8$} \label{fig:equal}
  \end{figure}

\emph{Case 3: $\beta_A(r)=\beta_I(r):=\beta(r)$, $\delta_A(r)=\delta_I(r):=\delta(r)$}. We consider two states: $r_1$, in which the epidemic dies out, and $r_2$ in which it persists. The parameters are: $\beta_A(1)=\beta_I(1)=0.05$, $\beta_A(2)=\beta_I(2)=0.9$, $\delta_A=\delta_I=0.07$, \ste{$\mu=1/(60*365)$}, $\nu=0.01$, $\alpha=0.5$, $\gamma=0.02$.
In Fig.\ref{fig:equal} a), we have $F_1$, $F_2$ with parameters $k_1=4$, $\theta_1=0.8$ and $k_2=15$, $\theta_2=0.8$, respectively. Consequently, $m_1=5$ and $m_2=18.75$, and $$\sum_{r \in \cM} \pi_r m_r \left( \beta(r) \frac{\gamma+\mu}{\nu +\gamma + \mu } -(\delta+\mu)\right)=4.8809 >0.$$ Thus, the whole system is persistent in time mean.
In Fig.\ref{fig:equal} b), $F_1$ and $F_2$ have parameters $k_1=15$, $\theta_1=0.8$ and $k_2=3$, $\theta_2=0.8$. Thus, $m_1=18.75$ and $m_2=3.75$, and $$\sum_{r \in \cM} \pi_r m_r \left( \beta(r) \frac{\gamma+\mu}{\nu +\gamma + \mu } -(\delta+\mu)\right)=0.6506 >0.$$ Here, we stay on average longer in the state where the epidemic will go extinct, with respect to the case a). However, although the system is persistent in time mean, the threshold value lowers a lot and the fraction of infectious symptomatic and asymptomatic individuals have the time to decays in some time windows, and the susceptible to increases.
In Fig.\ref{fig:equal} c) $F_1$ and $F_2$ have parameters $k_1=18$, $\theta_1=0.8$ and $k_2=1$, $\theta_2=0.8$, respectively. Thus, $m_1=22.5$ and $m_2=1.25$, and $$\sum_{r \in \cM} \pi_r m_r \left( \beta(r) \frac{\gamma+\mu}{\nu +\gamma + \mu } -(\delta+\mu)\right)=-0.0812 <0.$$ Thus, in this case with the same parameters of the cases a) and b), we have that the disease will go extinct almost surely, in the long run, stressing again the relevance of the mean sojourn times to stem the epidemic.

\section{Conclusion}  
We have analyzed a SAIRS-type epidemic model with vaccination under semi-Markov switching. In this model, the role of the asymptomatic individuals in the epidemic dynamics and the random environment that possibly influences the disease transmission parameters are considered. Under the assumption that both asymptomatic and symptomatic infectious have the same transmission and recovery rates, we have found the value of the basic reproduction number $\cR_0$ for our stochastic model. We have showed that if $\cR_0<1$ the disease will go extinct almost surely, while if $\cR_0>1$ the system is persistent in time mean. Then, we have analyzed the model without restrictions, that is the transmission and recovery rates of the asymptomatic and symptomatic individuals ca be possible different. In this case, we have found two different sufficient conditions for the almost sure extinction of the disease and a sufficient condition for the persistence in time mean. However, the two regions of extinction and persistence are not adjacent but there is a gap between them, thus we do not have a threshold value dividing them. \\
In the case of disease persistence, by restricting the analysis to two environmental states, under the Lie bracket conditions, we have investigated the omega-limit set of the system. Moreover, we have proved the existence of a unique invariant probability measure for the Markov process obtained by introducing the backward recurrence process that keeps track of the time elapsed since the latest switch.
Finally, we have provided numerical simulations to validate our analytical result and investigate the role of the mean sojourn time in the random environments.

\section*{Acknowledgments}    
This research was supported by the University of Trento in the frame ``SBI-COVID - Squashing the business interruption curve while flattening pandemic curve (grant 40900013)''.
   
\bibliographystyle{plain}
\bibliography{biblio2}
\end{document}